\newtheorem{theorem}{Theorem}[section]
\newtheorem{lemma}[theorem]{Lemma}
\newtheorem{proposition}[theorem]{Proposition}
\newtheorem{corollary}[theorem]{Corollary}
\newtheorem{question}[theorem]{Question}
\newtheorem{notation}[theorem]{Notation}
\newtheorem{remark}[theorem]{Remark}
\newtheorem{definition}[theorem]{Definition}
\newtheorem{example}[theorem]{Example}
\newcommand{\ZZ}{\mathbb Z}
\newcommand{\PP}{\mathbb P}
\newcommand{\QQ}{\mathbb Q}
\newcommand{\Eff}{\operatorname{Eff}}
\newcommand{\Mov}{\operatorname{Mov}}
\newcommand{\Curv}{\operatorname{Curv}}
\newcommand{\rat}{\dashrightarrow}
\newcommand{\Bs}{\operatorname{Bs}}
\newcommand{\arrow}{\rightarrow}
\newcommand{\discrep}{\operatorname{discrep}}
\newcommand{\BL}{\operatorname{BL}}
\newcommand{\BJ}{\operatorname{BJ}}
\newcommand{\BS}{\operatorname{BS}}
\newcommand{\ourproduct}{\mathbb P^n \times \mathbb P^{n+1}}
\newcommand{\blowup}[3]{X^{#1,#2}_{#3}}
\newcommand{\ourproductblowup}[1]{X^{n,n+1}_{#1}}
\newcommand{\ourproductblowuptwo}{X^{n,n+1}_{n+2}}
\newcommand{\ourproductblowupthree}{X^{n,n+1}_{n+3}}
\title{Bilinear secants and birational geometry of blowups of $\PP^n \times \PP^{n+1}$}
\date{}
\author{Elisa Postinghel and Artie Prendergast-Smith}
\newcommand{\subjclass}[2][1991]{%
  \let\@oldtitle\@title%
  \gdef\@title{\@oldtitle\footnotetext{#1 \emph{Mathematics Subject Classification.} #2}}%
}
\newcommand{\keywords}[1]{%
  \let\@@oldtitle\@title%
  \gdef\@title{\@@oldtitle\footnotetext{\emph{Key words and phrases.} #1.}}%
}
\begin{document}

\subjclass[2020]{14C20; 14J45; 14J70; 14N07}

\keywords{Effective cones, base locus lemmas, log Fano varieties, bilinear secant varieties}

\maketitle

\begin{abstract}

We introduce bilinear secant varieties and joins of subvarieties of  products of projective spaces, as a generalisation of the classical secant varieties and joins of projective varieties. 
We show that the bilinear secant varieties of certain rational normal curves of $\ourproduct$ play a central role in the study of the birational geometry of $\ourproductblowup{s}$, its blow-up in $s$ points in general position. We show that $\ourproductblowup{s}$ is log Fano, and we compute its effective and movable cones, for $s\le n+2$ and $n\ge1$ and for $s\le n+3$ and $n\le 2$, and  we compute the effective and movable cones of $X^{3,4}_6$. 
\end{abstract}

\section*{Introduction}

This paper studies the birational geometry of certain varieties obtained as blowups of products of projective spaces in general sets of points. A basic question is to determine which such varieties have good finiteness properties such as being log Fano or, more generally, a Mori dream space. Complete answers are known in several case: for blowups of a single projective space by Mukai \cite{Mukai04, Mukai05}, Castravet--Tevelev \cite{CT06}, and Araujo--Masssarenti \cite{AM16}, and for products of projective spaces of equal dimension by Mukai \cite{Mukai05} and Lesieutre-Park \cite{LP17}. (See Section 1 below for precise statements of these results.)

For products of projective spaces of unequal dimensions, much less is known. Let $X^{n,m}_s$ denote the blowup of $\PP^n \times \PP^m$ in $s$ general points.  The authors and Grange \cite{GPP21} proved the log Fano property for $X^{1,2}_s$ and $X^{1,3}_s$ with $s \leq 6$. Bolognesi--Massarenti--Poma \cite{bolognesi2023cox} did the same for $X^{1,n}_s$ with $s \leq n+1$, and also gave a presentation of the Cox ring in these cases.

In this paper, we consider the case of $\ourproductblowup{s}$.  If $s\le n+1$, then $\ourproductblowup{s}$ is toric, hence log Fano and a Mori dream space. If $n\ge5$ and $s\ge n+4$, then $\ourproductblowup{s}$ is not a Mori dream space, nor log Fano, because its effective cone is not finitely generated; see Section \ref{section-preliminaries}.
Therefore, besides a few cases in small dimensions (see Question \ref{question-LF-smalldim}), the only open cases for $n\ge 5$ are $s= n+2$ and $s=n+3$, which we consider in this paper.

Our first main result Theorem \ref{theoremXn,n+1,n+2-logFano} shows that $\ourproductblowup{s}$ is log Fano for $s \leq n+2$. An explicit log Fano pair is obtained with an effective divisor given as the union of two irreducible divisors, one pulled back from each factor. Along the way, in Theorems \ref{eff-cone-n+2} and \ref{mov-cone-n+2} we obtain a description of the effective and movable cones of divisors of $\ourproductblowuptwo$, by means of a technique called the \emph{cone method} developed in previous work with Grange \cite{GPP21}. Analogously to the case of one factor $\PP^n$ blown-up at $n+2$ points, the extremal rays of the effective cone are spanned by two types of divisors: exceptional divisors and pullbacks of fixed hyperplanes from both factors. 

We then consider the case $\ourproductblowupthree$: here there is a new ingredient. First, there is a unique rational curve, the \emph{distinguished rational curve}, of bidegree $(n,n+1)$ in $\ourproduct$ passing through $n+3$ general points in general position; see Corollary \ref{corollary-ratcurvefewerpoints}, part (a). This is analogous to the unique rational curve of degree $n$ passing through $n+3$ points in general position in $\PP^n$. 
Continuing with the analogy, just as the secant varieties of the rational normal curve of $\PP^n$ play a central role in the birational geometry of the blow-up of $\PP^n$ at $n+3$ points in general position --- for instance, when they have codimension 1, their proper transforms span extremal rays of the effective cone \cite{Mukai05,CT06,BDPn+3} --- so too do the \emph{bilinear secant varieties} of the distinguished rational curve of $\ourproduct$.

Bilinear secant varieties and joins of subvarieties of products of projective spaces are introduced in Section \ref{section-bilinear}. In particular,  bilinear secant varieties of the distinguished rational curve of $\PP^n\times\PP^{n+1}$ are described in Proposition \ref{proposition-bilinearsecantdim} as determinantal varieties. For these varieties we provide \emph{base locus lemmas}, see Propositions \ref{BLI for bilinear spans}, \ref{prop-bli-bisecant} and \ref{prop-bli-bilinearjoins},  which are useful when implementing the cone method to obtain a description of the effective and movable cones of divisors, as well as when computing discrepancies of log resolutions of pairs in order to prove the log Fano property.

The case of the threefold $X^{1,2}_4$ was covered completely in \cite{GPP21}. In Theorem \ref{theorem-x235-logfano}, we use the bilinear secant construction to prove that the variety $X^{2,3}_5$ is log Fano, and in Theorem \ref{theorem-effcone-x235} and Corollary \ref{corollary-X235movable} we describe its effective and movable cones of divisors. 
The next case is $X^{3,4}_6$, for which we obtain a description of the effective and movable cones in Theorem \ref{theorem-effcone-x346}. We note that the same analogy with the case of $\PP^n$ that works for $s=n+2$ points does not continue to work for $s=n+3$: in this case the extremal rays of the effective cone include exceptional divisors, pullbacks of extremal divisors from the factors, and bilinear secant varieties or joins, but also, somewhat unexpectedly, certain pencils of bidegree $(1,1)$. Finally, in Section \ref{section-Xn,n,n+3} we present some conjectures and questions about bilinear secant varieties in higher dimensions and their role in the birational geometry of $X^{n,n+1}_{n+3}$.

\subsection*{Acknowledgements}
The first author is a member of GNSAGA of INdAM (Italy); her research is funded by the European Union under the Next Generation EU PRIN 2022 \emph{Birational geometry of moduli spaces and special varieties}, Prot. n. 20223B5S8L --  CUP: E53D23005400001. The second author was supported by the Engineering and Physical Sciences Research Council [grant number EP/W026554/1].
\section{Preliminaries and background}\label{section-preliminaries}

We work throughout over an algebraically closed field of characteristic zero. We use standard definitions from birational geometry, to be found for example in \cite{KM98}. In particular we recall that a normal projective variety $X$ is {\it log Fano} or {\it Fano type} if there is an effective $\QQ$-divisor $\Delta$ such that $(X,\Delta)$ is klt and $-(K_X+\Delta)$ is ample; $X$ is called a {\it Mori dream space} if its Cox ring is a finitely generated algebra over the base field. Hu--Keel \cite{HK00} showed that Mori dream spaces enjoy the ``best possible'' properties from the point of view of minimal model theory: in particular, their effective and movable cones are rational polyhedral. Birkar--Cascini--Hacon--McKernan \cite{BCHM} showed that every log Fano variety is a Mori dream space. 

We will be interested in when certain blowups of products of projective spaces have these properties. A complete classification of blow-ups in points of products of copies of $\PP^n$ that are Mori dream spaces is known. In particular, it turns out that being log Fano and being a Mori dream space are equivalent properties, as proved  by Mukai \cite{Mukai05}, Castravet--Tevelev \cite{CT06}, Araujo--Massarenti \cite{AM16} and Lesieutre-Park \cite{LP17}. An important feature of these blow-ups is that a certain Weyl group acts on the space of divisors, preserving the effective and movable cones. By contrast, for products of factors with different dimensions, as far as we know there is no such Weyl group action, making the question of determining those that are Mori dream spaces or log Fano, and of computing their effective cones, more challenging. 

We will denote by $X^{n,m}_s$ the blow-up of $\PP^n\times\PP^m$ in $s$ points in general position.
 Let us summarise what is known about whether or not the space $X^{n,m}_s$ has the log Fano or Mori dream space properties:
 \begin{itemize}
 \item If $n \leq m$ and $s \leq n+1$ then $X^{n,m}_s$ is toric, hence log Fano.
 \item If $n \geq 5$ then $X^{n,0}_s$ is log Fano if and only if it is a Mori dream space if and only if $s \leq n+3$, by Mukai \cite{Mukai05}, Castravet--Tevelev \cite{CT06}, and Araujo--Massarenti \cite{AM16}. More generally, Mukai \cite{Mukai05} and Lesieutre-Park \cite{LP17} characterised those blowups of products of the form $(\PP^n)^m$ which are Mori dream spaces or log Fano.
 \item If $n \geq2$ and $X^{n,0}_s$ is not a Mori dream space, then neither is $X^{n,m}_s$ \cite[Corollary 6.9]{GPP21}. In particular, by the previous point, if $n \geq 5$ and $s>n+3$ then $X^{n,m}_s$ is not a Mori dream space for any $m$. 
  \end{itemize}

\subsection{Notation}\label{notation}
In this subsection we collect various notations that we use throughout the paper.
\begin{notation}\label{notation:classes}
We use the following notation for the varieties we work on, and numerical classes of divisors and curves on them:
\begin{itemize}
\item $X^{n,m}_s$: the blowup of the product $\PP^n \times \PP^m$ in  a set of $s$ general points $\{p_1,\ldots,p_s\}$.
\item $\pi_1 \colon X^{n,m}_s \arrow \PP^n$ and $\pi_2  \colon X^{n,m}_s \arrow \PP^m$: the natural projections.
\item $H_1$ and $H_2$: the pullback to $X^{n,m}_s$ of hyperplane classes via $\pi_1$ and $\pi_2$ respectively.
\item $l_1$ and $l_2$: the class of a line contained in a general fibre of $\pi_2$ and $\pi_1$ respectively. 
\item $E_i$ and $e_i$: the exceptional divisor of the blowup of $p_i$ and the class of a line in $E_i$ respectively. 
\end{itemize}
\end{notation}
The set $\{H_1,H_2,E_1,\ldots,E_s\}$ is a basis for the space $N^1(X^{n,m}_s)$ and the set $\{l_1,l_2,e_1,...,e_s\}$ is a basis for $N_1(X^{n,m}_s)$. We have the following pairing between the two given bases:
\begin{align*}
  H_i \cdot l_j= \delta_{ij}; &\quad H_i \cdot e_j = 0;\\
  E_i \cdot l_j = 0; &\quad E_i \cdot e_j = -\delta_{ij}.
\end{align*}

\begin{notation}\label{notation:fibres}
We use the following notations for the points that we blow up, their projections to factors, the linear spans of subsets of the points and fibres over them:
\begin{itemize}
\item $p_1,\dots,p_s$: $s$ point in general position in $\PP^n\times\PP^m$.
\item $p^j_i$: the projection of $p_i$ to the $j$th factor, for $i\in\{1,\dots,s\}$ and $j\in\{1,2\}$.
\item $L^j_I$: the linear span of $\{p^j_i:i\in I\}$ for 
a subset $I\subset\{1,\dots,s\}$ and for $j\in\{1,2\}$.
\item $\Pi^j_I$: the fiber over $L^j_I$ via the $j$th projection for 
a subset $I\subset\{1,\dots,s\}$ and for $j\in\{1,2\}$, i.e. $\Pi^1_I=L^1_I\times\PP^m$ and $\Pi^2_I=\PP^n\times L^2_I$.
\item  $A=\{\Pi^1_I: I\subset\{1,\dots,s\},1\le |I|\le n\}$: the set of all fibres over linear spans of sets of points projected to the first factor.
\item $B=\{\Pi^2_I: I\subset\{1,\dots,s\},1\le |I|\le m\}$: the set of all fibres over linear spans of sets of points projected to the second factor.
\item $C=\{L^1_I\times L^2_J: I_1,I_2\subset\{1,\dots,s\},1\le |I_1|\le n,  1\le |I_2|\le m\}$: the set of all the intersections of an element of $A$ and an element of $B$.
\end{itemize}
When no confusion is likely to arise, we will sometime,  abusing notation, use the same symbols for the above subvarieties of $\PP^n\times\PP^m$ and their strict transforms in $X^{n,m}_s$.
\end{notation}

\subsection{The nef cone of \texorpdfstring{$X^{n,m}_s$}{X{n,m}}}
We will start by showing that, in the cases we will consider, the nef cone of $X^{n,m}_s$ is easy to compute. The proof is based on the following lemma, whose statement and proof are modified and simplified versions of those of \cite[Lemma 2.6]{CLO16}. 
\begin{lemma}\label{lemma-clo}
  Define the following vectors in $\ZZ^{s+2}$:
  \begin{align*}
    l_1 &= \left(1,0,0,\ldots,0\right);\\
    l_2 &= \left(0,1,0,\ldots,0\right);\\
    e_i &= \left(0,0,0,\ldots,0,1,0,\ldots 0\right)
  \end{align*}
  where for $e_i$ we have $1 \leq i \leq s$ and the nonzero entry occurs in place $i+2$.
  
  Suppose that $v=(a_1,a_2,-b_1,\ldots,-b_s) \in \ZZ^{s+2}$ is an integer vector satisfying
  \begin{enumerate}
  \item[(a)] $a_i, \, b_j \geq 0$ for all $i$ and $j$;
  \item[(b)] $a_1+a_2 \geq \sum_{i=1}^s b_i$.
  \end{enumerate}
Then $v$ is a positive linear combination of the vectors $e_i$ and $l_j-e_i$. 
\end{lemma}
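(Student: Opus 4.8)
The plan is to reduce the statement to a purely combinatorial splitting problem. Note first that, since $v\in\ZZ^{s+2}$ has the form $(a_1,a_2,-b_1,\dots,-b_k)$, we must have $k=s$, so the indices of the $b_i$ match those of the generators $e_i$. I would look for a representation
\[
v = \sum_{i=1}^{k} x_i\, e_i + \sum_{i=1}^{k} y_i\,(l_1 - e_i) + \sum_{i=1}^{k} z_i\,(l_2 - e_i)
\]
with all $x_i, y_i, z_i \ge 0$, and simply compare coordinates. Coordinate $1$ forces $\sum_i y_i = a_1$, coordinate $2$ forces $\sum_i z_i = a_2$, and coordinate $i+2$ forces $x_i - y_i - z_i = -b_i$, i.e.\ $x_i = y_i + z_i - b_i$. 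Hence the whole problem reduces to finding nonnegative $y_i, z_i$ with $\sum_i y_i = a_1$, $\sum_i z_i = a_2$, and $y_i + z_i \ge b_i$ for every $i$; once these are produced, the remaining coefficients $x_i := y_i + z_i - b_i$ are automatically $\ge 0$, and this is the one place where hypothesis (b) is needed.

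To build such $y_i, z_i$, I would first fix only the combined amounts $w_i := y_i + z_i$, asking for $w_i \ge b_i$ for all $i$ and $\sum_i w_i = a_1 + a_2$. Since hypothesis (b) gives $\sum_i b_i \le a_1 + a_2$, I can take $w_i = b_i$ for $i \ge 2$ and absorb the nonnegative surplus into the first index, setting $w_1 := b_1 + \big(a_1 + a_2 - \sum_i b_i\big)$; then $w_i \ge b_i \ge 0$ for all $i$ and $\sum_i w_i = a_1 + a_2$, as wanted.

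It then remains to split each $w_i$ as $y_i + z_i$ with $0 \le y_i \le w_i$ so that $\sum_i y_i = a_1$, whence $\sum_i z_i = \sum_i w_i - a_1 = a_2$ comes for free. This is possible because the target $a_1$ lies in $[0, \sum_i w_i] = [0, a_1+a_2]$, using $a_1, a_2 \ge 0$ from (a). I would make the split explicit by a greedy recipe, $y_i := \min\big(w_i,\, a_1 - \sum_{j<i} y_j\big)$, which keeps each $y_i$ in $[0,w_i]$ and terminates with $\sum_i y_i = a_1$. Because $a_1, a_2$ and all the $b_i$ are integers, the $w_i$, the greedily chosen $y_i$, and $z_i = w_i - y_i$ are integers as well, so the representation of $v$ in fact has nonnegative integer coefficients.

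I do not expect a serious obstacle: the argument is elementary and each hypothesis is used exactly once, (a) to keep $b_i \ge 0$ and to guarantee that the splitting range contains $a_1$, and (b) to guarantee the existence of weights $w_i \ge b_i$ summing to $a_1 + a_2$. The only step needing slight care is the splitting, which is a one-dimensional ``water-filling'' statement; I would present it via the explicit greedy formula above rather than an intermediate-value argument, precisely so that integrality of the coefficients is manifest.
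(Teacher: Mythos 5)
Your proof is correct, and it takes a genuinely different route from the paper's. The paper argues by induction on $a_1+a_2$: after permuting indices so that $a_1 \geq a_2$ and $b_1$ is the largest of the $b_i$, it either concludes directly (when $b_1=0$) or peels off a single generator $l_1-e_1$, checks that hypotheses (a) and (b) survive for $v-(l_1-e_1)$, and invokes the inductive hypothesis. You instead solve for all the coefficients at once: writing $v=\sum_i x_i e_i+\sum_i y_i(l_1-e_i)+\sum_i z_i(l_2-e_i)$ and comparing coordinates reduces the lemma to producing nonnegative $y_i,z_i$ with $\sum_i y_i=a_1$, $\sum_i z_i=a_2$ and $y_i+z_i\geq b_i$, which you construct explicitly (combined weights $w_i$ absorbing the slack from hypothesis (b), then a greedy split of each $w_i$ into $y_i+z_i$). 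Your greedy recipe does work: the remaining budget $a_1-\sum_{j<i}y_j$ stays nonnegative at each step, and it is exhausted exactly because $a_1\leq\sum_i w_i=a_1+a_2$, using (a). Both arguments deliver nonnegative \emph{integer} coefficients (reading ``positive'' as ``nonnegative,'' as the paper intends). What your version buys: the decomposition is given in closed form rather than recursively, integrality is manifest without tracking it through an induction, and the two-stage structure (choose totals $w_i$, then split among the $l_j$) would extend with no change to products with more than two factors, i.e.\ to generators $e_i$ and $l_j-e_i$ for $j=1,\dots,r$. What the paper's version buys: brevity --- no linear system to set up, and the only thing to verify is that the two inequalities persist after subtracting one generator.
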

\begin{proof}
  The proof is by induction on $a=a_1+a_2$. If $a=1$, then $(a)$ and $(b)$ imply that $v=l_j-\sum_I e_i$ for a set $I$ of cardinality at most 1, hence is of the form claimed.

Now suppose that $a>1$ and $v$ satisfies the given inequalities. By swapping $l_1$ and $l_2$ and the $e_i$ we can assume without loss of generality that $a_1 \geq a_2$ and $b_1 \geq \cdots \geq b_s$. If $b_1=0$ then $v=a_1l_1+a_2l_2$ which is of the form claimed, so assume that $b_1>0$. Let $v'=v-(l_1-e_1)=(a_1-1,a_2,-b_1+1,-b_2,\ldots,-b_k)=(a_1',a_2,-b_1^\prime,\ldots,-b_s)$; by our assumption on $b_1$ this satisfies $(a)$; moreover $(a_1-1)+a_2 \geq (b_1-1)+b_2+\cdots b_s$, so the new vector $v\prime$ satisfies $(b)$ also. By induction $v^\prime$ is of the form claimed, hence so is $v=v^\prime + (l_1-e_1)$.   
\end{proof}
The lemma implies that the cone of curves $\Curv(\blowup{n}{m}{s})$ is generated by the $e_i$ and $l_j-e_i$ if and only if the class $H_1+H_2-\sum_{i=1}^s E_i$ is nef. We now show that condition holds in cases of interest for us.

\begin{proposition}\label{proposition-nefcone}
  Let $n, m \geq 2$ and $s \leq n+m$. Then the class $D= H_1+H_2-\sum_{i=1}^s E_i$ is nef on $X^{n,m}_s$. Therefore the cone of curves $\Curv({X^{n,m}_s})$ is spanned by the classes $e_i$ for $1 \leq i \leq k$ and the classes $l_j-e_i$ for $j=1,2$ and $1 \leq i \leq s$.
\end{proposition}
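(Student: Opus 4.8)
The plan is to deduce the description of $\Curv(X^{n,m}_s)$ from the nefness of $D=H_1+H_2-\sum_{i=1}^sE_i$: by the equivalence established immediately after Lemma~\ref{lemma-clo}, the cone of curves is spanned by the classes $e_i$ and $l_j-e_i$ if and only if $D$ is nef, so the whole content is to prove that $D$ is nef. Since nefness is tested against irreducible curves, I would fix an irreducible curve $C\subset X^{n,m}_s$ and show $D\cdot C\ge0$. If $C$ is contained in an exceptional divisor $E_i$, then $H_1\cdot C=H_2\cdot C=0$ and $E_j\cdot C=0$ for $j\ne i$ while $-E_i\cdot C\ge0$, so $D\cdot C\ge0$ (such a class is a nonnegative multiple of $e_i$). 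Otherwise $C$ is the strict transform of its image $\bar C\subset\PP^n\times\PP^m$; writing $(a_1,a_2)$ for the bidegree of $\bar C$ and $m_i=\operatorname{mult}_{p_i}\bar C$, one computes $D\cdot C=a_1+a_2-\sum_{i=1}^sm_i$, so everything reduces to the inequality
\[
a_1+a_2\ \ge\ \sum_{i=1}^s m_i .
\]

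To prove this I would play the two projections against each other, using the bound $s\le n+m$. The local input is that if $h\subset\PP^n$ is a hyperplane through $\{p_i^1:i\in S_1\}$ with $\pi_1(\bar C)\not\subseteq h$, then the intersection $(h\times\PP^m)\cap\bar C$ is proper and, because $h\times\PP^m$ is smooth at each $p_i$, the local intersection number there is at least $\operatorname{mult}_{p_i}\bar C$; summing these contributions inside the global number gives $a_1=(h\times\PP^m)\cdot\bar C\ge\sum_{i\in S_1}m_i$. Symmetrically, a hyperplane $h'\subset\PP^m$ through $\{p_i^2:i\in S_2\}$ with $\pi_2(\bar C)\not\subseteq h'$ yields $a_2\ge\sum_{i\in S_2}m_i$. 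Setting $S=\{i:m_i>0\}$ and splitting $S=S_1\sqcup S_2$ with $|S_1|\le n$ and $|S_2|\le m$ --- possible since $|S|\le s\le n+m$ --- and adding the two bounds would give the required inequality.

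The step needing care, and the one I expect to be the main obstacle, is guaranteeing that the auxiliary hyperplanes can be chosen so as not to contain the projected curves $\pi_1(\bar C)$ and $\pi_2(\bar C)$; without this the intersection is improper and the bound is lost. A hyperplane through $\{p_i^1:i\in S_1\}$ avoiding $\pi_1(\bar C)$ exists precisely when $\langle\pi_1(\bar C)\rangle\not\subseteq L^1_{S_1}$, and since $\pi_1(\bar C)$ passes through every $p_j^1$ with $j\in S$, the linear general position of the points forces this as soon as $S\setminus S_1\ne\emptyset$ and $|S_1|\le n$ (any $|S_1|+1\le n+1$ of the $p_i^1$ are independent, so such a $p_j^1$ lies off $L^1_{S_1}$). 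I would therefore arrange the partition so that both $S_1$ and $S_2$ are nonempty, which is possible exactly when $2\le|S|\le n+m$ (here $n,m\ge1$ suffices), and use the symmetric statement on the second factor.

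Finally I would clear the degenerate cases $|S|\le1$ directly. If $|S|=0$ the inequality is trivial; if $S=\{i_0\}$, then $\bar C$, being a curve, has $a_1\ge1$ or $a_2\ge1$, and the single-hyperplane bound on the corresponding factor applies, since a hyperplane through the single point $p_{i_0}^1$ (resp.\ $p_{i_0}^2$) cannot contain the genuine curve $\pi_1(\bar C)$ (resp.\ $\pi_2(\bar C)$). This gives $m_{i_0}\le a_1$ or $m_{i_0}\le a_2$, hence the inequality. The spanning description of $\Curv(X^{n,m}_s)$ in the second assertion of the proposition then follows from the equivalence recalled at the outset.
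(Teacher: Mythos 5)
Your proof is correct, but it follows a genuinely different route from the paper's. Both arguments reduce the second assertion to the nefness of $D$ via Lemma~\ref{lemma-clo}; for the nefness itself, however, the paper argues by induction on $n+m$: it writes $D=\bigl(H_1-\sum_{i=1}^{n}E_i\bigr)+\bigl(H_2-\sum_{i=n+1}^{s}E_i\bigr)$, observes that any $D$-negative irreducible curve must lie in one of these two effective summands, and disposes of those by toric geometry (the first summand is isomorphic to the toric variety $X^{n-1,m}_{n}$) and by the inductive hypothesis (the second is $X^{n,m-1}_{s-n}$), with base case $n=m=2$. You instead prove nefness directly by intersection theory: writing $D\cdot C=a_1+a_2-\sum_i m_i$ for the strict transform of an irreducible $\bar C$ of bidegree $(a_1,a_2)$ with multiplicities $m_i$ at the points, you establish $a_1+a_2\ge\sum_i m_i$ by splitting $S=\{i:m_i>0\}$ into nonempty parts $S_1\sqcup S_2$ with $|S_1|\le n$ and $|S_2|\le m$ (this is exactly where the hypothesis $s\le n+m$ enters) and bounding each $a_j$ below by the multiplicities in $S_j$, using a pulled-back hyperplane through the corresponding points that meets $\bar C$ properly. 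Your criterion for properness --- $\langle\pi_1(\bar C)\rangle\not\subseteq L^1_{S_1}$, which is forced because some point of $S_2$ lies off $L^1_{S_1}$ by linear general position --- is the crucial step, and you handle it correctly, as well as the degenerate cases $|S|\le 1$. As for what each approach buys: the paper's proof outsources the work to toric facts and induction and avoids multiplicity estimates; yours is self-contained and more elementary, makes transparent why $s\le n+m$ is precisely the needed bound, and in fact works for all $n,m\ge 1$ rather than $n,m\ge 2$. One wording caveat: in the case $|S|=1$ you assert that a hyperplane through $p^1_{i_0}$ ``cannot contain'' the genuine curve $\pi_1(\bar C)$, which is literally false when that curve is degenerate; what your argument needs --- and what your own criterion supplies, since a one-dimensional curve cannot be contained in the single point $L^1_{\{i_0\}}$ --- is only that \emph{some} hyperplane through $p^1_{i_0}$ fails to contain it.
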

\begin{proof}
  The second statement follows from the first by using Lemma \ref{lemma-clo}. Indeed, if $C$ is an irreducible curve on $X^{n,m}_s$, then either $C$ is contained in one of the $E_i$ or it is a strict transform of an irreducible curve on $\PP^n\times\PP^m$. In the first case, the class of $C$ is a positive multiple of $e_i$. In the second case, assuming that $D$ is nef we have $D \cdot C \geq 0$, so the class of $C$ must satisfy the assumptions of Lemma \ref{lemma-clo}, and therefore this class is a positive combination of the $e_i$ and the $l_j-e_i$. 
  
  It remains to prove that the class $D$ is nef. We use induction on $n+m$. The base case is $n=m=2$, in which case our variety $\blowup{2}{2}{s}$ is the blowup of $\PP^2 \times \PP^2$ in at most 4 points. For 3 or fewer points, the variety is toric, hence its cone of curves is generated by classes of the form $e_i$ and $l_j-e_i$. Since $D$ is nonnegative on all these classes, it is nef in this case. For $s=4$, consider the decomposition
  \begin{align*}
    D &= \left( H_1-E_1-E_2 \right)+ \left(H_2-E_3-E_4 \right).
  \end{align*}
  Each summand is represented by an effective divisor isomorphic to $\blowup{1}{2}{2}$, the blowup of $\PP^1 \times \PP^2$ in 2 points, and any negative curve for $D$ must be contained in one of these two divisors. But the variety $\blowup{1}{2}{2}$ is toric, so as above $D$ is nonnegative on any curve contained in one of these two divisors. Hence $D$ is nef in this case. 

  For the inductive step, now consider the variety $\blowup{n}{m}{s}$ with $n+m>4$, and the class $D$ on it. Assume without loss of generality that $n \leq m$. We can decompose our class as
  \begin{align*}
    D &= \left( H_1-\sum_{i=1}^n E_i \right) + \left( H_2-\sum_{i=n+1}^s  E_i \right).
  \end{align*}
  By our hypothesis on $s$, both summands are represented by effective divisors, namely strict transforms of pullbacks of hyperplanes on $\PP^n$ and $\PP^m$. Call these divisors $D_n$ and $D_m$: again, any irreducible curve on which our class is negative must be contained in either $D_n$ or $D_m$. Note that $D_n$ is isomorphic to $\blowup{n-1}{m}{n}$, which is toric, so as above $D$ is nonnegative on all curves in $D_n$.  Next consider the divisor $D_m$, which is isomorphic to $\blowup{n}{m-1}{s-n}$. By our inductive hypothesis, we can assume that the cone of curves of $D_m$ is also generated by classes of the form $e_i$ and $l_j-e_i$. Again we know that $D$ is nonnegative on all such classes, so we are done. 
\end{proof}

\section{The distinguished rational curve}

In this section, we prove the existence of a ``distinguished" rational curve of degree $n$ through $n+3$ points in $\ourproduct$. This can be thought of as the analogue in this setting of the unique rational normal curve through $n+3$ points in $\PP^n$. We will see later that this curve and its ``bilinear secant varieties" play a key role in the birational geometry of $\ourproductblowupthree$, similarly to how the rational normal curve of $\PP^n$ and its secant varieties play a role in the birational geometry of $\PP^n$ blown up at $n+3$ points. For later use, we also record some related results on existence of rational curves of specified types.

 \begin{proposition}\label{prop-rational-curve} Given $s+2$ general points in $\PP^1 \times \PP^s$, there is a unique irreducible curve $C$ of bidegree $(1,s)$ passing through all the points. This curve is smooth and rational.
  \end{proposition}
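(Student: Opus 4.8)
The plan is to reduce the problem to a question about degree-$s$ morphisms $\PP^1 \to \PP^s$ and then settle it by linear algebra. First I would observe that any irreducible curve $C$ of bidegree $(1,s)$ is a section of the first projection $\pi_1\colon \PP^1 \times \PP^s \to \PP^1$: since $C\cdot H_1 = 1$, the restriction $\pi_1|_C$ has degree one, so the normalization of $C$ is $\PP^1$ and $C$ is the closure of the graph of a morphism $f=[f_0:\cdots:f_s]\colon \PP^1 \to \PP^s$ defined by binary forms of degree $s$, with $C\cdot H_2 = s$ recording the degree of $f$. Conversely, any such $f$ whose $f_i$ have no common factor produces an irreducible curve of bidegree $(1,s)$. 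Thus it suffices to show that, up to scalar, there is a unique such $f$ whose image passes through the given points, and that this $f$ has no common factor.

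For existence and uniqueness I would write the points as $p_k = (t_k,x_k)$, $k=1,\dots,s+2$; by general position the $t_k \in \PP^1$ are distinct and the $x_k \in \PP^s$ are in linearly general position. Encoding $f$ by its $(s+1)\times(s+1)$ coefficient matrix $C$, and letting $m_k \in \CC^{s+1}$ be the vector of degree-$s$ monomials evaluated at $t_k$, the incidence conditions become
\[
  C\,m_k = \lambda_k\,x_k, \qquad k=1,\dots,s+2,
\]
for scalars $\lambda_k$. The heart of the argument is that the $s+2$ vectors $m_k$ satisfy a unique (up to scale) linear relation, all of whose coefficients are nonzero: any $s+1$ of them form a generalized Vandermonde matrix with nonzero determinant because the $t_k$ are distinct, so the $m_k$ span $\CC^{s+1}$ and no proper subrelation exists. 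The same holds for the $x_k$ by general position. Using $m_1,\dots,m_{s+1}$ as a basis determines $C$ in terms of $\lambda_1,\dots,\lambda_{s+1}$; imposing the last condition $C m_{s+2} = \lambda_{s+2}x_{s+2}$ and comparing coordinates in the basis $x_1,\dots,x_{s+1}$ forces $\lambda_k = \lambda_{s+2}\,\beta_k/\alpha_k$, where $\alpha_k,\beta_k$ are the nonzero coefficients expressing $m_{s+2}$ and $x_{s+2}$ in the respective relations. This determines all $\lambda_k$ up to the single overall scalar $\lambda_{s+2}$, hence fixes $C$ and $f$ uniquely up to scale; since all $\lambda_k$ are nonzero, the resulting curve genuinely passes through each $p_k$.

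It then remains to confirm the bidegree and the geometric statements. For the bidegree I would argue that the image $f(\PP^1)\subset \PP^s$ must span $\PP^s$, because it contains the $x_k$, which are in linearly general position and hence span $\PP^s$; since a rational curve spanning $\PP^s$ has degree at least $s$ while $f$ is given by forms of degree $s$, the $f_i$ can have no common factor and the bidegree is exactly $(1,s)$. Finally, because $C$ is the graph of a morphism, $\pi_1$ restricts to an isomorphism $C \xrightarrow{\sim} \PP^1$, so $C$ is smooth, irreducible, and rational.

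The step I expect to be the main obstacle is establishing that both linear relations---among the $m_k$ and among the $x_k$---have all coefficients nonzero, and then carrying out the bookkeeping so that every $\lambda_k$ is forced to be nonzero. This nonvanishing is exactly what simultaneously guarantees that the solution is a single point (uniqueness) and that the curve actually meets all $s+2$ points rather than degenerating to lower bidegree.
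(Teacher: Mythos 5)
Your proof is correct, and its skeleton coincides with the paper's: both arguments first observe that an irreducible curve of bidegree $(1,s)$ maps isomorphically to $\PP^1$ under the first projection, hence is the graph of a morphism $\PP^1 \to \PP^s$ given by $s+1$ binary forms of degree $s$, and then show that the interpolation conditions at the $s+2$ points determine these forms up to one overall scalar. Where you genuinely differ is in how that interpolation problem is solved. The paper normalizes the projected points in $\PP^s$ to the standard projective frame $[1,0,\ldots,0],\ldots,[0,\ldots,1],[1,1,\ldots,1]$; the conditions then become $f_i(p_j)=0$ for $i\neq j$, so each $f_i$ is the unique (up to scale) degree-$s$ form vanishing at the $s$ points $p_j$, $j\neq i$, and the last point fixes the relative scalars. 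You instead work invariantly with the coefficient matrix, using twice the fact that $s+2$ vectors in $\CC^{s+1}$ in linearly general position satisfy a unique linear relation with all coefficients nonzero --- for the monomial vectors $m_k$ this is the Vandermonde determinant and distinctness of the $t_k$, for the $x_k$ it is the generality of the points --- and this pins the matrix down to a line. The two computations are equivalent (the paper's normalization is exactly what trivializes your linear algebra), but your version makes explicit something the paper passes over in silence: the verification that the solution has no common factor, so that the resulting curve has bidegree exactly $(1,s)$ rather than degenerating; your argument via nondegeneracy of the image (it contains the spanning set $\{x_k\}$, using $\lambda_k\neq 0$) and the degree bound for nondegenerate curves in $\PP^s$ is a clean way to do this. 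In the paper this point is immediate but unstated, since the constructed $f_i$ are products of distinct linear forms with empty common zero locus.
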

  \begin{proof}
Let $\Gamma$ be a curve of bidegree $(1,s)$ in $\PP^1 \times \PP^s$ passing through a set of $s+2$ general points. We have $\Gamma \cdot F=1$ for any fibre $F$ of the projection $\PP^1 \times \PP^s \arrow \PP^1$, so $\Gamma$ must be smooth and the projection $p_1 \colon \Gamma \arrow \PP^1$ must be birational, hence an isomorphism. Note that if $p_2 \colon \Gamma \arrow \PP^s$ is the map to the second factor, then $\Gamma$ is precisely the graph of the morphism $\varphi = p_2 \circ p_1^{-1} \colon \PP^1 \arrow \PP^s$, whose image is a rational normal curve in $\PP^s$. The morphism $\varphi$ is therefore of the form $\varphi=\left[f_1,\ldots,f_{s+1}\right]$ where the $f_i$ are homogeneous forms of degree $s$ on $\PP^1$.
    
    To prove that a unique such $\Gamma$ exists, we can assume without loss of generality that the $s+2$ points are of the form
    \begin{align*}
      \left(p_1, [1,0,\ldots,0]\right),\ldots,\left(p_{s+1},[0,\ldots,1]\right), \, \left(p_{s+2},[1,1,\ldots,1]\right) 
    \end{align*}
    where the $p_i$ are pairwise distinct points in $\PP^1$. In terms of the morphism $\varphi=\left[f_1,\ldots,f_{s+1}\right]$ this means we have conditions
    \begin{align*}
      f_i(p_j) &=0 \ \text{ for all } 1 \leq i, \, j \leq s+1, \, i \neq j\\
        f_i(p_{s+2}) &= f_j(p_{s+2}) \ \text{ for all } i,j.
    \end{align*}
Given $s$ distinct points in $\PP^1$ there is a nonzero form of degree $s$ vanishing at all $s$ points and nowhere else, and this form is unique up to scalar. So the first set of conditions determine forms $f_1,\ldots,f_{s+1}$, each unique up to a scalar. Fixing a choice of $f_1$, the second set of conditions above then determine $f_2,\ldots,f_{s+1}$ uniquely; therefore the map $\varphi$, or equivalently the curve $\Gamma$, is uniquely determined. 
  \end{proof}
  \begin{remark}
 We remark that the curve  C of Proposition \ref{prop-rational-curve} plays a role in the birational geometry of the space $X^{1,n}_{n+2}$, the product $\PP^1\times\PP^n$ blown-up at $n+2$ points in general position. In fact the strict transform of $C$  spans an extremal ray of the Mori cone, see \cite[Proposition 3.2]{bolognesi2023cox}.
  \end{remark}

\begin{corollary} \label{corollary-ratcurvefewerpoints}
\begin{enumerate}
 \item[(a)] Given $n+3$ general points in $\ourproduct$, there is a unique irreducible curve of bidegree $(n,n+1)$ passing through all the points. This curve is smooth and rational.
 \item[(b)] Given $s \leq n+2$ general points in $\ourproduct$, the family of curves of bidegree $(n,n+1)$ passing through all the points sweeps out a Zariski-dense subset of $\ourproduct$.
 \item[(c)] Suppose $d \geq s$ and $k<d+2$. Given $k$ general points in $\PP^1 \times \PP^s$, the family of irreducible curves of bidegree $(1,d)$ passing through all the points sweeps out a Zariski-dense subset of $\PP^1 \times \PP^s$.
\end{enumerate}
\end{corollary}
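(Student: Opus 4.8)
The three parts all rest on Proposition~\ref{prop-rational-curve}, which settles the base case of bidegree $(1,s)$ curves in $\PP^1\times\PP^s$, so the plan is to bootstrap from it.

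For part (a), I would first project the $n+3$ points to the first factor, obtaining $n+3$ general points of $\PP^n$. By the classical theory of rational normal curves these lie on a unique rational normal curve $\Gamma_1$ of degree $n$; fix an isomorphism $\gamma\colon\PP^1\to\Gamma_1$ and let $t_i=\gamma^{-1}(p_i^1)$ be the resulting pairwise distinct points of $\PP^1$. To build a curve of bidegree $(n,n+1)$ it then suffices to find a degree-$(n+1)$ morphism $\psi\colon\PP^1\to\PP^{n+1}$ with $\psi(t_i)=p_i^2$, equivalently a curve of bidegree $(1,n+1)$ through the $n+3=(n+1)+2$ points $(t_i,p_i^2)\in\PP^1\times\PP^{n+1}$. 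Since the $t_i$ are distinct and the $p_i^2$ are general, these points are in general position, so Proposition~\ref{prop-rational-curve} with $s=n+1$ produces a unique such $\psi$, and $t\mapsto(\gamma(t),\psi(t))$ is the desired smooth rational curve. For uniqueness, given any irreducible bidegree $(n,n+1)$ curve $C$ through the points, I would argue that its projection to $\PP^n$ is forced to be $\Gamma_1$: the image passes through $n+3$ general points, hence is nondegenerate of degree at least $n$, while its degree divides $n$; so the projection is birational onto a nondegenerate degree-$n$ curve, namely the rational normal curve $\Gamma_1$. Thus $C$ is rational and is the graph over $\Gamma_1$ of a degree-$(n+1)$ map hitting the $(t_i,p_i^2)$, which is unique by Proposition~\ref{prop-rational-curve}; this also shows $C$ is smooth.

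Part (b) then follows formally. Fixing general points $p_1,\dots,p_s$ with $s\le n+2$ and a general target $q\in\ourproduct$, the collection $p_1,\dots,p_s,q$ has $s+1\le n+3$ members; completing it to a set of $n+3$ general points and applying part (a) yields a bidegree $(n,n+1)$ curve through all of them, in particular through $p_1,\dots,p_s,q$. As $q$ varies over a dense open subset, these curves sweep out a Zariski-dense subset.

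For part (c), the same ``add the target point'' device reduces the density statement to the existence of an irreducible bidegree $(1,d)$ curve through $d+2$ general points of $\PP^1\times\PP^s$: since $k<d+2$ gives $k+1\le d+2$, and any $\le d+2$ general points lie inside some set of $d+2$ general points, a curve through $d+2$ general points meets all the required ones. To produce such a curve I would either project, lifting the points along a linear projection $\PP^d\rat\PP^s$, applying Proposition~\ref{prop-rational-curve} in $\PP^1\times\PP^d$ to get a bidegree $(1,d)$ curve upstairs (a parametrized rational normal curve), and pushing it back down; or count dimensions directly, noting that the conditions $\psi(t_i)=q_i$ impose $s(d+2)$ linear conditions on the $(s+1)(d+1)$-dimensional space of coefficient tuples of a degree-$d$ map $\psi\colon\PP^1\to\PP^s$, leaving a solution space of dimension at least $d+1-s\ge 1$. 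The main obstacle, here and in the existence half of part (a), is genericity: one must verify that the resulting $\psi$ is a genuine base-point-free morphism of degree exactly $d$ (its components sharing no common factor) with irreducible graph, or equivalently, in the projection argument, that the rational normal curve upstairs avoids the center of projection, so that the output is an irreducible curve of the correct bidegree rather than a degenerate or lower-degree locus.
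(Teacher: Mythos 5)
Your proposal is correct and takes essentially the same route as the paper: part (a) reduces to Proposition~\ref{prop-rational-curve} via the unique rational normal curve through the projected points and the identification of its preimage with $\PP^1 \times \PP^{n+1}$, part (b) follows by adjoining a general target point, and part (c) uses a general linear projection $\PP^d \dashrightarrow \PP^s$ exactly as the paper does. The only differences are that you spell out the degree argument forcing the projection of $C$ onto the rational normal curve (which the paper asserts without proof) and offer an optional dimension count in (c); neither changes the approach.
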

\begin{proof}
For (a), projecting our given points to $\PP^n$ we get a general set of $n+3$ points in $\PP^n$, so there is a unique rational normal curve $R_n \subset \PP^n$ passing through all these points. Any irreducible curve $C$ of the given bidegree must project onto $R_n$, so must be contained in the subvariety $\pi_1^{-1}(R_n)$. This subvariety is isomorphic to $\PP^1 \times \PP^{n+1}$ and the isomorphism maps $C$ to a curve of bidegree $(1,n+1)$ passing through the images of the $n+3$ points. By Proposition \ref{prop-rational-curve} there is a unique such curve $C$, which must be smooth and rational.

Statement (b) follows immediately from (a).

To prove (c), choose $k$ general points in $\PP^1 \times \PP^d$. By Proposition \ref{prop-rational-curve} , it is again immediate that the family of irreducible curves of bidegree $(1,d)$ in $\PP^1 \times \PP^d$ passing through the points sweeps out a Zariski-dense subset. Now projecting away from a general $\PP^{d-s-1}$ in $\PP^d$ we get a family of irreducible curves of bidegree $(1,d)$ in $\PP^1 \times \PP^s$ through $k$ general points. 
\end{proof}
For convenience we introduce the following terminology for the curve whose existence is proved in part (a) of the previous Corollary: 
\begin{definition}\label{def-distinguishedcurve}
    Given $n+3$ general points in $\ourproduct$, the unique irreducible curve of bidegree $(n,n+1)$ passing through all the points is called the \emph{distinguished rational curve} determined by the points.
\end{definition}
\section{Bilinear secant varieties}\label{section-bilinear}
In this section, we define some generalisations of classical projective constructions to the setting of products of projective spaces. To keep things simple, we restrict ourselves to products with only two factors, since this will be the case of interest to us in later sections.

  \begin{definition}\label{definition-bilinearspan}
    Let $z_1,\ldots,z_k$ be points in in $\PP^n \times \PP^m$. Assume that the points are \emph{general}, by which we mean that the sets of their projections $\{z^1_1,\dots,z^1_k\}$ and $\{z^2_1,\dots,z^2_k\}$ to the first and second factors are linearly independent sets in $\PP^n$ and $\PP^m$ respectively.  Denote the linear spans of these two subsets by $L^1(z_1,\ldots,z_k) \subset \PP^n$ and $L^2(z_1,\ldots,z_k) \subset \PP^m$ respectively.

    The \emph{bilinear span} $\BL(z_1,\ldots,z_k) \subset \PP^n \times \PP^m$ of  $\{z_1,\ldots,z_k\}$ is defined as the product
    \begin{align*}
      L^1(z_1,\ldots,z_k) \times L^2(z_1,\ldots,z_k) \subset \PP^n \times \PP^m.
    \end{align*}
  \end{definition}
  
  \begin{definition}
    Let $X_1,\ldots,X_k$ be subvarieties of $\PP^n \times \PP^m$. The \emph{bilinear join} of $X_1,\ldots,X_k$ is defined as
    \begin{align*}
      \BJ(X_1,\ldots,X_k) = \overline{\bigcup_{z_i \in X_i} \BL(z_1,\ldots,z_k)}
    \end{align*}
    where the union is taken over $k$-tuples of general points (in the sense of Definition \ref{definition-bilinearspan}). 
  \end{definition}
Specialising our construction to the case where all the $X_i$ are equal, we get the corresponding notion of secant variety:
  \begin{definition}
    Let $X$ be a subvariety of $\PP^n \times \PP^m$. The \emph{$k$-th bilinear secant variety of $X$} is
    \begin{align*}
      \BS_k(X) = \BJ(\underbrace{X,\ldots,X}_{k \text{ times}} )
    \end{align*}
  \end{definition}
For this paper, the most important application of this construction is to the distinguished rational curve $C$ in $\ourproduct$. To understand the varieties $\BS_k(C)$ it is convenient to have a determinantal description similar to that for classical secant varieties. Up to projective transformations, we can assume that $C$ is the zero locus of the ideal of $2 \times 2$ minors of the matrix
  \begin{align}\label{eqn-M1}
    M_1 &=
    \begin{pmatrix}
      x_0 & x_1 & \cdots & x_{n-1} & y_0 & y_1 &\cdots & y_n \\
      x_1 & x_2 & \cdots & x_n & y_1 & y_2 & \cdots &y_{n+1}
    \end{pmatrix} \tag{$\ast$}
  \end{align}
where the $x_i$, respectively the $y_j$, are homogeneous coordinates on $\PP^n$, respectively $\PP^{n+1}$. We then have the following determinantal description of the bilinear secant varieties of $C$: 
  \begin{proposition}\label{proposition-bilinearsecantdim}
    The variety $\BS_k(C)$ is irreducible of dimension $\min \left\{ 3k-2, 2n+1 \right\}$. It is the zero locus in $\ourproduct$ of the ideal of maximal minors of the matrix 
    \begin{align*} \label{eqn-M2}
M_k &=
      \begin{pmatrix}
        x_0 & \cdots & x_{n-k} & y_0 & \cdots & y_{n-k+1} \\
        x_1 & \cdots & x_{n-k+1} & y_1 & \cdots & y_{n-k+2} \\
        & \vdots & & & \vdots & \\
        x_k & \cdots & x_n & y_k & \cdots & y_{n+1}\\
             \end{pmatrix} \tag{$\ast \ast$}
    \end{align*}
  \end{proposition}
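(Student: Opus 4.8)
The plan is to realise $\BS_k(C)$ as the image of a single incidence correspondence living over the space of degree-$k$ \emph{apolar forms}, and to extract both the determinantal description and the dimension from the geometry of that correspondence. Parametrise $C$ by $[s:t]\mapsto(x_i=s^{n-i}t^i,\ y_j=s^{n+1-j}t^j)$, so that the two projections trace out the rational normal curves $v(s,t)=(s^{n-i}t^i)_i\subset\PP^n$ and $w(s,t)=(s^{n+1-j}t^j)_j\subset\PP^{n+1}$. The rows of $M_k$ are the $(k+1)$ successive shifts of the coordinate vector, and a direct computation shows that a covector $(a_0,\dots,a_k)$ lies in the left kernel of $M_k(x,y)$ exactly when the binary form $A(s,t)=\sum_r a_r s^{k-r}t^r$ is apolar to both $x$ in degree $n$ and $y$ in degree $n+1$; concretely $\sum_r a_r x_{r+c}=0$ for $c=0,\dots,n-k$ and $\sum_r a_r y_{r+c}=0$ for $c=0,\dots,n-k+1$. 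Thus the maximal minors of $M_k$ vanish at $(x,y)$ if and only if $x$ and $y$ share a common degree-$k$ apolar form.

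I would then introduce the incidence variety $Z\subset\PP^k\times(\ourproduct)$ cut out by the bilinear equations $(a_0,\dots,a_k)\cdot M_k(x,y)=0$, with $\PP^k$ parametrising $A=[a_0:\dots:a_k]$, and study its two projections. For the projection $\operatorname{pr}_1$ to $\PP^k$ the key observation is that the banded Toeplitz matrix encoding the apolarity conditions on $x$ (respectively $y$) has \emph{full row rank for every nonzero $A$}: its rows are shifts of $(a_0,\dots,a_k)$, whose leftmost nonzero entries form a strict staircase, hence are independent. Consequently, for each $A$ the fibre of $\operatorname{pr}_1$ is a product $L^1(A)\times L^2(A)$ of linear subspaces of projective dimension $k-1$ each, and these assemble into a $\PP^{k-1}\times\PP^{k-1}$-bundle over $\PP^k$; in particular $Z$ is irreducible of dimension $3k-2$. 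Over the dense locus where $A$ has $k$ distinct roots $[s_b:t_b]$, one checks that $L^1(A)$ is the span of the $v(s_b,t_b)$ and $L^2(A)$ the span of the $w(s_b,t_b)$, so the fibre is precisely the bilinear span of the corresponding $k$ points of $C$. Hence $\operatorname{pr}_2(Z)$ equals both the locus $V(\text{maximal minors of }M_k)$ and the closure $\BS_k(C)$ of the union of these bilinear spans; since $Z$ is irreducible and projective, this image is closed and irreducible, giving at once the determinantal description and irreducibility.

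It then remains to compute $\dim\BS_k(C)=\dim\operatorname{pr}_2(Z)=3k-2-(\text{dimension of the general fibre of }\operatorname{pr}_2)$. When the number of columns $2n-2k+3$ of $M_k$ is at most $k$ (equivalently $3k\ge 2n+3$), the matrix has rank $\le k$ identically, so $\BS_k(C)=\ourproduct$ has dimension $2n+1$, matching $\min\{3k-2,2n+1\}$. In the complementary range $3k\le 2n+2$ I must show $\operatorname{pr}_2$ is generically finite, that is, that a general point of $\BS_k(C)$ admits a \emph{unique} apolar form. Here the bilinear coupling is essential: writing $x=\sum_b\lambda_b v(p_b)$, $y=\sum_b\mu_b w(p_b)$ for general $p_b,\lambda_b,\mu_b$, the common apolar forms $B$ correspond to vectors $\gamma=(B(p_b))_b$ in the intersection $\operatorname{diag}(\lambda)^{-1}\ker W^{(n-k)}\cap\operatorname{diag}(\mu)^{-1}\ker W^{(n+1-k)}$, where $W^{(m)}$ is the generalised Vandermonde of degree-$m$ monomials at the $p_b$. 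These two subspaces have dimensions $2k-n-1$ and $2k-n-2$, whose sum is less than $k$ precisely in this range, so the expected intersection is $0$; this forces $\gamma=0$, hence $B\in\langle A\rangle$, giving a one-dimensional kernel and the expected dimension $3k-2$.

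I expect the main obstacle to be exactly this transversality statement, namely that for general $\lambda,\mu,p$ the two Vandermonde kernels meet only at the origin. The dimension count shows this is the generic behaviour, but making it rigorous requires either a Kleiman-type genericity argument for the torus translates $\operatorname{diag}(\mu)^{-1}$, or, more concretely, exhibiting one configuration with trivial intersection and invoking lower semicontinuity of rank to propagate it; the latter reduces the whole dimension claim to computing the rank of $M_k$ at a single well-chosen point. I note that when $n\ge 2k-2$ the form $A$ is already pinned down by $y$ alone, since $\ker W^{(n+1-k)}$ then vanishes, so the genuine difficulty is confined to the strictly coupled range $n<2k-2$ (which is where the cases of interest for $n\ge 5$ live). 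Once generic finiteness is in hand, $\operatorname{pr}_2$ is birational onto its image, $\dim\BS_k(C)=3k-2$, and the proof is complete.
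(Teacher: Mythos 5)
Your route is genuinely different from the paper's. The paper works upstairs in $\PP^{2n+2}$: it quotes Catalano--Johnson \cite{CJ96} and De Poi \cite{dP96} for the fact that the maximal minors of $M_k$ cut out the $k$-th secant variety of the rational normal scroll $\Sigma(n,n+1)$ \emph{and} that this secant variety has the expected dimension $\min\{3k-1,2n+2\}$, then descends to $\ourproduct$ along the $\GG_m$-quotient map $[x,y]\mapsto([x],[y])$, identifying the image with $\BS_k(C)$ by an induction on joins. Your incidence variety $Z\subset\PP^k\times(\ourproduct)$ replaces all of this with apolarity, and the parts you complete are correct and self-contained: the staircase argument does give full row rank of the shifted Toeplitz matrices for every nonzero $A$, so $Z$ is a $\PP^{k-1}\times\PP^{k-1}$-bundle over $\PP^k$, hence irreducible of dimension $3k-2$; over the distinct-root locus the fibres are exactly the bilinear spans of $k$ points of $C$, which yields both the determinantal description $\operatorname{pr}_2(Z)=\{\operatorname{rank} M_k\le k\}=\BS_k(C)$ and the bound $\dim\BS_k(C)\le\min\{3k-2,2n+1\}$ without citing anything.

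The gap is the reverse inequality, and you have named it but not closed it. When $3k\le 2n+2$ you need $\operatorname{pr}_2$ generically finite, which you reduce to: for general distinct $p_b$ and general $\lambda,\mu$, the subspaces $\operatorname{diag}(\nu)K_1$ and $K_2$ of $\CC^k$ (with $\nu=\mu/\lambda$, $K_1=\ker W^{(n-k)}$, $K_2=\ker W^{(n+1-k)}$ of dimensions $2k-n-1$ and $2k-n-2$) meet only in $0$. This is not a routine dimension count: a general torus translate of one subspace can meet a second subspace of complementary-or-smaller dimension nontrivially (for instance if both contain a common coordinate vector, and more generally whenever the coordinate matroids of $K_1$ and $K_2$ obstruct transversality), so one must genuinely use the structure of Vandermonde kernels, or else exhibit one explicit $(x,y)$ on a bilinear span where the left kernel of $M_k$ is one-dimensional and invoke semicontinuity of rank; you propose both strategies but carry out neither. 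Note also that this is not a corner case you could wave away: the range $n<2k-2$, $3k\le 2n+2$ where the coupling is essential contains precisely the divisorial cases $n\equiv 2\pmod 3$, $k=(2n+2)/3$, $n\ge 5$, which are the ones Proposition \ref{prop-secantmult} and the paper's later constructions depend on. What you are missing is exactly the non-defectivity statement that the paper imports from \cite{CJ96} and \cite{dP96}; until you prove it or cite it, your argument establishes irreducibility, the determinantal description, and the upper bound on the dimension, but not the equality claimed in the proposition.
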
 
 \begin{proof}
   Catalano--Johnson \cite{CJ96} and De Poi \cite{dP96} showed that, in the projective space $\PP^{2n+2}$ with homogeneous coordinates $[x_0,\ldots,x_n,y_0,\ldots,y_{n+1}]$, the ideal of maximal minors of $M_k$ cuts out the $k$-th secant variety $S_k(\Sigma)$ of the rational normal surface scroll $\Sigma=\Sigma(n,n+1)$ of degree $2n+1$ defined by the $2\times2$ minors of $M_1$, and that $S_k(\Sigma)$ has the expected dimension $\operatorname{min}\left\{3k-1,2n+2 \right\}$.

   Now we consider the rational map
   \begin{align*}
     \varphi \colon \PP^{2n+2} &\dashrightarrow \ourproduct \\
     \left[x_0,\ldots,x_n,y_0,\ldots,y_{n+1} \right] &\mapsto \left(\left[x_0,\ldots,x_n\right],\left[y_0,\ldots,y_{n+1}\right]\right)
   \end{align*}
   After blowing up the disjoint linear spaces $\Lambda_n = \{ y_i =0 \mid i = 0,\ldots n+1\}$ and $\Lambda_{n+1} = \{ x_i =0 \mid i = 0,\ldots n\}$ inside $\PP^{2n+2}$, the map $\varphi$ extends to a morphism. For a point $p=\left(\left[x_0,\ldots,x_n\right],\left[y_0,\ldots,y_{n+1}\right]\right) \in \ourproduct$ the fibre of $\varphi$ over $p$ is the line
   \begin{align*}
    \left\{\left[tx_0,\ldots,tx_n,sy_0,\ldots,sy_{n+1}\right]\mid [t,s] \in \PP^1 \right\}
   \end{align*}
   Since $M_k$ is homogeneous with respect to both sets of variables, if such a line intersects $S_k(\Sigma)$ then it is contained in $S_k(\Sigma)$. Therefore the image $\varphi(S_k(\Sigma))$ is a variety of dimension $\dim S_k(\Sigma)-1 = \min\{3k-2,2n+1\}$, and evidently it is also defined by the maximal minors of $M_k$.

   Finally we need to show that $\varphi(S_k(\Sigma))=\BS_k(C)$ as defined above. It is clear that $\BS_k(C)$ is irreducible and its dimension is at most $\min\{3k-2,2n+1\}$, so by the previous paragraph it suffices to show that $\varphi(S_k(\Sigma)) \subset \BS_k(C)$. For $k=1$ there is nothing to prove since $C$ is by definition cut out by the maximal minors of $M_1$. Now let $k \geq 1$ and assume that $\varphi(S_k(\Sigma)) \subset \BS_k(C)$; we will prove that $\varphi(S_{k+1}(\Sigma)) \subset \BS_{k+1}(C)$.

   Consider a general point $p \in S_{k+1}(\Sigma)$. We can choose points $q \in S_k(\Sigma)$ and $r \in \Sigma$ such that $p$ lies on the line $L$ spanned by $q$ and $r$. By the inductive assumption $\varphi(q)$ is a point of $\BS_k(C)$ and $\varphi(r)$ is a point of $C$, and moreover $\varphi(L)$ is a curve of bidegree $(1,1)$ passing through $\varphi(q)$ and $\varphi(r)$. Any such curve is contained in the bilinear span $L(\varphi(q),\varphi(r))$ and hence in $\BJ(\BS_k(C),C)=\BS_{k+1}(C)$, as required.
  \end{proof}

\begin{proposition}\label{prop-secantmult}
Suppose $n=2\mod 3$ and $k=(2n+2)/3$, so that $\BS_k(C)$ is a divisor in $\ourproduct$. Then the multiplicity of $BS_k(C)$ at a general point of $C$ equals $k$.
\end{proposition}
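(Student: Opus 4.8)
The plan is to reduce the statement to a computation on the rational normal scroll $\Sigma=\Sigma(n,n+1)\subset\PP^{2n+2}$ introduced in the proof of Proposition \ref{proposition-bilinearsecantdim}, where the ambient space is a single projective space and the secant variety is cut out by a genuine determinant. First I would note that under the hypotheses $n\equiv 2\pmod 3$ and $k=(2n+2)/3$ the matrix $M_k$ is square: it has $k+1$ rows and $(n-k+1)+(n-k+2)=2n-2k+3=k+1$ columns. Writing $m=k+1$, the varieties $\BS_k(C)$ and $S_k(\Sigma)$ are then the hypersurfaces cut out by the single equation $\det M_k=0$, downstairs and upstairs respectively, and the multiplicity of $\BS_k(C)$ at a point equals the order of vanishing $\operatorname{ord}\det M_k$ there, provided $\det M_k$ is reduced. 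Reducedness is a point to be checked: $S_k(\Sigma)$ is irreducible, so $\det M_k=G^e$ with $G$ irreducible, and one must rule out $e>1$; this follows because $M_k$ is a $1$-generic (scroll) matrix, so by \cite{CJ96,dP96} its ideal of maximal minors is prime.

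Next I would reduce to the scroll. From the proof of Proposition \ref{proposition-bilinearsecantdim} we have $S_k(\Sigma)=\varphi^{-1}(\BS_k(C))$, and over a general point $p\in C$ the map $\varphi$ is a submersion at a general point $\tilde p\in\Sigma$ of the corresponding fibre; such a $\tilde p$ is moreover a general point of $\Sigma$. Since the multiplicity of a hypersurface is preserved under pullback by a submersion, it suffices to prove that $\operatorname{ord}_{\tilde p}\det M_k=k$ at a general point $\tilde p$ of $\Sigma$.

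I would then compute this order on $\Sigma$. On $\Sigma$ the matrix $M_1$ has rank $1$; writing a point in the scroll parametrization $x_i=\alpha t^i$, $y_j=\beta t^j$ one checks that every row of $M_k$ is a scalar multiple of the first, so $q:=M_k(\tilde p)$ has rank $1$. For the lower bound I would work in affine coordinates centred at $\tilde p$ and expand $\det(q+B)=\sum_{j=0}^m\sigma_j(q,B)$, where $\sigma_j$ collects the terms using $j$ columns of $B$ and $m-j$ columns of $q$, and $B$ runs over the linear image of the coordinate directions under $M_k$. The $m-j$ columns coming from the rank-$1$ matrix $q$ span at most a line, so each such term has rank at most $j+1$; hence $\sigma_j=0$ for $j\le m-2$, the lowest surviving term $\sigma_{m-1}$ has degree $m-1=k$ in $B$, and therefore $\operatorname{ord}_{\tilde p}\det M_k\ge k$. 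For the upper bound I would use a vertex argument in $\PP^{2n+2}$: the form $\det M_k$ has degree $m=k+1$, so $\operatorname{ord}_{\tilde p}\det M_k\le k+1$, and were it equal to $k+1$ at a general $\tilde p\in\Sigma$, the hypersurface $S_k(\Sigma)$ would be a cone with vertex $\tilde p$. The locus of such vertices is a linear subspace, since it is cut out by the vanishing of the order-$(m-1)$ partial derivatives of $\det M_k$, which are linear forms (the lower-order partials then vanish automatically by Euler's relation). Being closed, this locus would contain all of $\Sigma$; as $\Sigma$ is nondegenerate in $\PP^{2n+2}$ it would be the whole space, forcing $\det M_k\equiv 0$ and contradicting that $S_k(\Sigma)$ is a proper hypersurface. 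Hence $\operatorname{ord}_{\tilde p}\det M_k=k$.

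The main obstacle is the upper bound, namely excluding that the order of vanishing jumps to $k+1$; the vertex/cone argument resolves this cleanly precisely because we have passed to the single projective space $\PP^{2n+2}$, where the notions of cone, degree and nondegeneracy are available and the vertex locus is linear. The secondary delicate point is the reducedness of $\det M_k$: without $e=1$ the order computation would determine the multiplicity of the reduced variety only up to the factor $e$. Both the rank-$1$ computation on $\Sigma$ and the $1$-genericity of $M_k$ are routine given the scroll parametrization, so I expect the argument to be short once the reduction to $\Sigma$ is set up.
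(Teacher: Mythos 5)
Your proof is correct, and it takes a genuinely different route from the paper's, which stays entirely on $\ourproduct$ and argues via ideals of minors: there, the two key observations are that the order-$d$ partials of $\det M_k$ are linear combinations of its $(k+1-d)\times(k+1-d)$ minors, and that every $2\times 2$ minor of $M_k$ lies in the ideal of $C$ (mixed minors by a telescoping identity expressing them as sums of $2\times 2$ minors of $M_1$, pure minors by the classical catalecticant fact \cite[Proposition 9.7]{Harr92}), which together give multiplicity at least $k$ along $C$. Your rank-one column expansion of $\det(q+B)$ proves the same lower bound more directly, and note that it needs no passage to the scroll: $M_k$ already has proportional rows along the parametrization of $C$ downstairs. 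The real gain of your approach is that it addresses two points the paper's written proof leaves implicit, both of which are needed for the stated \emph{equality}: first, the upper bound ruling out multiplicity $k+1$, which your vertex-locus argument in $\PP^{2n+2}$ (order-$k$ partials are linear forms, lower-order ones vanish by Euler's relation, and the resulting linear locus cannot contain the nondegenerate $\Sigma$ unless $\det M_k\equiv 0$) settles cleanly, and which is the actual reason the reduction to the scroll earns its keep; second, the reducedness of $\det M_k$, without which the order of vanishing would only compute $e$ times the multiplicity of the reduced divisor. One citation quibble: the primeness of the ideal of maximal minors of the $1$-generic matrix $M_k$ is Eisenbud's theorem on linear sections of determinantal varieties rather than a statement of \cite{CJ96} or \cite{dP96}, which the paper invokes for the dimension and the determinantal description of $S_k(\Sigma)$; the fact itself is standard, so this does not affect the validity of your argument.
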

\begin{proof}
Since $\BS_k(C)$ is a divisor by hypothesis, the matrix $M_k$ in (\ref{eqn-M2}) is square, and $\BS_k(C)$ is defined by the vanishing of its determinant. The partial derivatives of $\det(M_k)$ of a given order $d$ with respect to the $x_i$ and $y_j$ are linear combinations of $(k+1-d)\times(k+1-d)$ minors of $M_k$. Therefore, it suffices to prove that all minors of $M_k$ of size $(k+1)-(k-1)=2$ vanish along $C$. 

Since $C$ is defined by the vanishing of the $2 \times 2$ minors of the matrix $M_2$, it suffices to prove that the ideal of $2 \times 2$ minors of $C$ contains the ideal of $2 \times 2$ minors of $M_k$. For simplicity of notation, first consider $2 \times 2$ minors of $M_k$ that involve both sets of variables: such a minor has the form $x_i y_j - x_k y_l$ where $i<k$ and $i+j=k+l$. We have
\begin{align*}
    x_iy_j-x_ky_l &= \sum_{a=i}^{k-1} \left( x_a y_{i+j-a} -x_{a+1} y_{i+j-a-1} \right)
\end{align*}
and each summand on the right-hand side is a $2 \times 2$ minor of $M_2$. For $2 \times 2$ minors of $M_k$ involving only the $x_i$ or the $y_j$ we reduce to the case of classical catalecticants, where the result is well-known: see for example \cite[Proposition 9.7]{Harr92}.
\end{proof}

\section{Base locus inequalities} \label{section-bli}

In this section we will derive base locus inequalities, giving lower bounds for the multiplicity of containment of various distinguished subvarieties of $\ourproductblowup{s}$ in the base locus of a given divisor class. We fix the following form for an effective divisor class on $\ourproductblowup{s}$, where $s \leq n+3$:
\begin{equation}\label{general form divisor}
D=d_1H_1+d_2H_2-\sum_{i=1}^sm_iE_i\end{equation}

We start by recording some simple "effectivity" inequalities that must be satisfied by all effective divisors on $\ourproductblowup{s}$.
\begin{lemma}\label{lemma-effectivityinequality}
Let $D$ be an effective divisor class of the form \eqref{general form divisor}. Then 
\begin{enumerate}
    \item[(a)] for every $i \in \{1,\ldots s\}$ we have $m_i \leq d_1+d_2$;
    \item[(b)] for every subset $I \subseteq \{1,\ldots,s\}$ with $|I|\leq n+2$ we have
\begin{align*}
    \sum_{i \in I} m_i \leq nd_1+(n+1)d_2.
\end{align*}
\end{enumerate}
\end{lemma}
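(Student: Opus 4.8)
The plan is to use the standard \emph{moving curve inequality}: if $D$ is an effective divisor on a smooth projective variety and $C$ is an irreducible curve with $C\not\subseteq\operatorname{Supp}(D)$, then $D\cdot C\geq 0$. In particular, if $C$ moves in a family of irreducible curves sweeping out a Zariski-dense subset of $X^{n,n+1}_s$, then a general member of the family avoids the fixed closed set $\operatorname{Supp}(D)$, so the inequality holds for its class. Each of the two inequalities then follows by exhibiting such a covering family whose general member has a prescribed numerical class and computing $D\cdot C$ using the pairing of Notation \ref{notation:classes}, namely $D\cdot l_1=d_1$, $D\cdot l_2=d_2$ and $D\cdot e_i=m_i$.

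For part (a), I would take the family of irreducible curves of bidegree $(1,1)$ passing through the single point $p_i$. Such a curve is the graph of an isomorphism between a line in $\PP^n$ through $p_i^1$ and a line in $\PP^{n+1}$ through $p_i^2$; it is smooth and rational, and through $p_i$ and any second general point there is such a curve. Letting the second point vary shows this family sweeps out a dense subset of $X^{n,n+1}_s$, and a general member passes through $p_i$ with multiplicity one and avoids the other blown-up points. Hence its strict transform has class $l_1+l_2-e_i$, and the moving curve inequality gives
\[ 0 \le D\cdot(l_1+l_2-e_i)=d_1+d_2-m_i, \]
which is exactly (a).

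For part (b), I would use the distinguished rational curves of Corollary \ref{corollary-ratcurvefewerpoints}. Given $I$ with $|I|\le n+2$, complete $\{p_i:i\in I\}$ to a set of $n+3$ general points by adjoining general points, and take the associated distinguished curve of bidegree $(n,n+1)$; by Corollary \ref{corollary-ratcurvefewerpoints}(b) (equivalently, by letting the adjoined points vary in part (a)) these curves sweep out a dense subset of $X^{n,n+1}_s$. Crucially, each such curve is smooth by Corollary \ref{corollary-ratcurvefewerpoints}(a), so it meets $p_i$ with multiplicity one for $i\in I$, and a general member avoids the remaining points $p_j$ with $j\notin I$. Thus its strict transform has class $n l_1+(n+1)l_2-\sum_{i\in I}e_i$, and the moving curve inequality yields
\[ 0 \le D\cdot\Big(n l_1+(n+1)l_2-\sum_{i\in I}e_i\Big)=n d_1+(n+1)d_2-\sum_{i\in I}m_i, \]
which is (b).

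The step requiring the most care is the verification that the general member of each family meets the prescribed points with multiplicity exactly one (and misses the remaining ones), so that its strict transform has precisely the claimed class. This matters because the coefficients $m_i$ need not be nonnegative --- for instance $D=E_i$ has $m_i=-1$ --- so a higher multiplicity at a point with negative $m_i$ could reverse the inequality. The smoothness statements in Corollary \ref{corollary-ratcurvefewerpoints}, together with the explicit description of bidegree $(1,1)$ curves as graphs of isomorphisms, are exactly what rule this out; once the multiplicities are pinned down, the two computations above are immediate.
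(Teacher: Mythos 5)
Your proposal is correct and follows exactly the paper's argument: both use the moving-curve inequality applied to the class $l_1+l_2-e_i$ of a bidegree $(1,1)$ curve through $p_i$ for part (a), and to the class $nl_1+(n+1)l_2-\sum_{i\in I}e_i$ for part (b), with Corollary \ref{corollary-ratcurvefewerpoints}(b) supplying the covering family. Your extra verification that the general member of each family has precisely the claimed class (multiplicity one at the prescribed points, missing the rest) is a detail the paper leaves implicit, but the core argument is identical.
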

\begin{proof}
If $C$ is a curve class whose irreducible representatives sweep out a Zariski-dense subset of $\ourproductblowup{s}$, then we must have $D \cdot C \geq 0$ for all effective divisors classes $D$.

To prove (a) we apply this fact with $C=l_1+l_2-e_i$, the class of the proper transform of a curve of bidegree $(1,1)$ through $p_i$. 

To prove (b), we apply this with $C=nd_1+(n+1)d_2-\sum_{i \in I} e_i$. Irreducible curves of this class sweep out a Zariski-dense subset by Corollary \ref{corollary-ratcurvefewerpoints} (b).
\end{proof}
Next, the simplest base locus inequality concerns the exceptional divisors over our blown-up points.
\begin{lemma}\label{bli-exceptional}
Let $D$ be an effective divisor class of the form \eqref{general form divisor}. Then $D$ contains the exceptional divisor $E_i$ at least $\max\{0,-m_i\}$ times in its base locus.
\end{lemma}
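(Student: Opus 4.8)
The plan is to run the standard intersection-theoretic argument for base-locus multiplicities along an exceptional divisor. Since $\max\{0,-m_i\}=0$ when $m_i\geq 0$, that case is vacuous, so I would immediately reduce to $m_i<0$ and aim to show that \emph{every} effective divisor in the class $D$ contains $E_i$ with multiplicity at least $-m_i$. The whole argument rests on pairing $D$ against the curve class $e_i$ of a line in the exceptional divisor, so I would begin there.

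The first step is the numerical computation. Using the pairing recorded in Notation \ref{notation:classes}, namely $H_1\cdot e_i=H_2\cdot e_i=0$ and $E_j\cdot e_i=-\delta_{ij}$, one gets
\begin{equation*}
D\cdot e_i = m_i.
\end{equation*}
Next let $\Gamma$ be any effective divisor representing $D$, and decompose it as $\Gamma=aE_i+\Gamma'$, where $a\geq 0$ is the multiplicity with which $E_i$ occurs in $\Gamma$ and $\Gamma'$ is effective and does not contain $E_i$ as a component. Intersecting with $e_i$ yields
\begin{equation*}
m_i = D\cdot e_i = \Gamma\cdot e_i = a\,(E_i\cdot e_i)+\Gamma'\cdot e_i = -a+\Gamma'\cdot e_i,
\end{equation*}
so that $a=\Gamma'\cdot e_i - m_i$. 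Thus the desired bound $a\geq -m_i$ is equivalent to the inequality $\Gamma'\cdot e_i\geq 0$.

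The only point needing justification — and the main, though minor, obstacle — is precisely this inequality $\Gamma'\cdot e_i\geq 0$. The class $e_i$ is the class of a line in $E_i\cong\PP^{2n}$, and these lines sweep out all of $E_i$. Since $\Gamma'$ does not contain $E_i$ as a component, its restriction to $E_i$ is a well-defined effective cycle, so a general line of class $e_i$ is not contained in $\Gamma'$ and meets it properly; hence $\Gamma'\cdot e_i$, computed as a degree on $E_i$, is nonnegative. This is just the instance, applied to the covering family of lines inside the exceptional divisor, of the general principle used repeatedly in this section (and made explicit in Lemma \ref{lemma-effectivityinequality}) that an effective divisor has nonnegative intersection with any curve class whose irreducible representatives move in a covering family. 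Since the bound $a\geq -m_i$ then holds for every effective representative $\Gamma$, the divisor $E_i$ lies in the base locus of $D$ with multiplicity at least $-m_i$, as claimed.
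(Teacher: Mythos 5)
Your proof is correct and takes essentially the same approach as the paper: both arguments rest on the computation $D\cdot e_i=m_i$ together with the fact that lines of class $e_i$ cover $E_i$, so an effective divisor not containing $E_i$ as a component has nonnegative intersection with a general such line. The paper phrases this as repeatedly subtracting $E_i$ from $D$, which is just the unrolled form of your one-shot decomposition $\Gamma=aE_i+\Gamma'$.
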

\begin{proof}
If $m_i \geq 0$ there is nothing to prove. So assume that $m_i<0$. Then $D \cdot l_i =m_i <0$. Since irreducible curves of class $l_i$ cover $E_i$, this means that every divisor in the class $D$ contains $E_i$ as a component. Replacing $D$ by the effective class $D-E_i$ and repeating the argument gives the statement of the lemma.
\end{proof}

\subsection{Base locus inequalities for bilinear joins and secant varieties}
Next we give a basic recipe on how, given base locus lemmas for two subvarieties of $\ourproductblowup{s}$, we can obtain a base locus lemmas for their bilinear join. This will then be applied in a number of cases.

\begin{lemma}\label{recipe-BLI-bilinear joins}
 Let $z_1,z_2$ be two distinct points of $\ourproduct$ with distinct projections to either factor. 
 Let $D$ be an effective divisor on $\ourproductblowup{s}$ of the form \eqref{general form divisor} that has multiplicity $m_{z_i}$ at $z_i$, for $i=1,2$. (If $z_i=p_j$ for some $j$, we set $m_{z_i}:=m_j$). Then $D$ contains the strict transform of the bilinear span $\BL(z_1,z_2)$ at least $\max\{0,m_{z_1}+m_{z_2}-d_1-d_2\}$ times in its base locus.
\end{lemma}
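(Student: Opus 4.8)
The plan is to produce a covering family of rational curves on the bilinear span along which $D$ is negative, and then to upgrade the resulting containment to the stated multiplicity by slicing. Write $V=\BL(z_1,z_2)$ and $t=m_{z_1}+m_{z_2}-d_1-d_2$, and assume $t>0$. Since $z_1,z_2$ have distinct projections to each factor, $L^1(z_1,z_2)$ and $L^2(z_1,z_2)$ are lines and $V\cong\PP^1\times\PP^1$. On $V$ the curves of bidegree $(1,1)$ through $z_1$ and $z_2$ form a pencil sweeping out $V$, whose members are smooth rational curves of bidegree $(1,1)$ in $\ourproduct$ contained in $V$ (as in the proof of Proposition \ref{proposition-bilinearsecantdim}). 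First I would note that the strict transform $\gamma$ of a general member has class $l_1+l_2-e_{z_1}-e_{z_2}$, omitting $e_{z_i}$ when $z_i$ is not a blown-up point, so that $D\cdot\gamma=d_1+d_2-m_{z_1}-m_{z_2}=-t<0$. As these curves cover $V$, every effective representative of $D$ must contain each of them, and hence contains $V$; this already gives multiplicity at least $1$.

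To promote this to the full multiplicity $t$, the difficulty is that $V$ has codimension $2n-1$ in $\ourproduct$, so for $n\ge 2$ one cannot subtract the class of $V$ from $D$ and induct. I would get around this by slicing down to a case where $V$ becomes a divisor. After pushing forward to $\ourproduct$, it suffices to show that every effective divisor $\bar D$ of bidegree $(d_1,d_2)$ with $\operatorname{mult}_{z_i}\bar D\ge m_{z_i}$ satisfies $\operatorname{mult}_V\bar D\ge t$. Fix a general threefold $W=L^1(z_1,z_2)\times P\cong\PP^1\times\PP^2$, with $P\subset\PP^{n+1}$ a general plane containing $L^2(z_1,z_2)$; then $z_1,z_2\in W$ and $V\subset W$ is a divisor of class $H_2|_W$. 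The key point to check is that $\operatorname{mult}_V\bar D=\operatorname{mult}_V(\bar D|_W)$ for general $W$: the generic multiplicity of a divisor along $V$ equals its multiplicity at a general point $v\in V$, and restriction to a general threefold through $v$ preserves that multiplicity, since the tangent cone of $\bar D$ at $v$ is a hypersurface and hence does not contain the general $3$-plane $T_vW\supset T_vV$. Likewise $\operatorname{mult}_{z_i}(\bar D|_W)\ge m_{z_i}$, because restriction never lowers multiplicity.

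On $W$ the induction on $t$ closes up. The pencil of $(1,1)$-curves above lies in $W$, covers $V$, and meets $\bar D|_W$ in degree $-t<0$, so $V$ is a fixed component of the linear system of $\bar D|_W$ and $\bar D|_W-V$ is effective. Since multiplicities add, $\bar D|_W-V$ has bidegree $(d_1,d_2-1)$ and multiplicity at least $m_{z_i}-1$ at $z_i$, so its associated quantity $m_{z_1}+m_{z_2}-2-d_1-(d_2-1)$ equals $t-1$; by induction $\operatorname{mult}_V(\bar D|_W-V)\ge t-1$, and adding back the copy of $V$ gives $\operatorname{mult}_V(\bar D|_W)\ge t$. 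Combined with the previous paragraph this yields $\operatorname{mult}_V\bar D\ge t$, proving the lemma. I expect the main obstacle to be exactly this upgrade from containment to multiplicity: the covering-curve computation is routine, but because $V$ is not a divisor in $\ourproduct$ one must justify the slicing step, namely the equality $\operatorname{mult}_V\bar D=\operatorname{mult}_V(\bar D|_W)$, and keep track of the bookkeeping when some $z_i$ is a blown-up point, so that the $E_i$ enter the class of the covering curves.
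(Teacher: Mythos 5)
Your first paragraph reproduces the paper's starting point: the paper blows up $z_1,z_2$, observes that the strict transforms of the $(1,1)$-curves through both points have class $l_1+l_2-e_{z_1}-e_{z_2}$, sweep out the strict transform of $\BL(z_1,z_2)$, and meet the strict transform of $D$ in degree $-t$. But the paper then concludes the multiplicity bound in a single step from this covering family (the moving-curve principle it uses throughout, e.g.\ in Propositions \ref{prop-bli-bisecant} and \ref{prop-bli-bilinearjoins}: an effective divisor negative on a family of irreducible curves sweeping out $V$ contains $V$ in its base locus with multiplicity at least $\max\{0,-D\cdot\gamma\}$). You instead attempt to re-derive that principle by slicing down to a threefold where $V$ becomes a divisor, and this is where your proof has a genuine gap.

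The gap is the claimed equality $\operatorname{mult}_V\bar D=\operatorname{mult}_V(\bar D|_W)$. Your slices are constrained to the product form $W=L^1(z_1,z_2)\times P$, so at a general point $v=(v_1,v_2)\in V$ the available tangent spaces $T_vW$ are exactly the $3$-planes $T_vV+w$ with $w\in\{0\}\oplus T_{v_2}\PP^{n+1}$; their union is the proper linear subspace $T_{v_1}L^1(z_1,z_2)\oplus T_{v_2}\PP^{n+1}$, of dimension $n+2<2n+1$ when $n\ge2$. A hypersurface cone can contain this whole subspace, so the justification ``the tangent cone of $\bar D$ at $v$ does not contain the general $3$-plane $T_vW$'' is invalid: $T_vW$ is \emph{not} a general $3$-plane through $T_vV$. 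The failure is not hypothetical. Take $z_i=p_i$ and let $\bar D$ be the pullback to $\ourproduct$ of a union of $m$ general hyperplanes of $\PP^n$ containing the line $L^1(p_1,p_2)$, so that $d_1=m$, $d_2=0$, $m_{z_1}=m_{z_2}=m$ and $t=m>0$. Then $\operatorname{Supp}\bar D\supset L^1(p_1,p_2)\times\PP^{n+1}\supset W$ for \emph{every} admissible slice $W$, so $\bar D|_W$ is not even a divisor and your induction never starts --- precisely on a class of divisors for which the lemma has content (the conclusion is still true for them, but your argument does not prove it). The induction you run on $W$ itself (subtract $V$, decrease $d_2$ and the $m_{z_i}$ by one, induct on $t$) is fine; the unproved step is transferring the bound from $\bar D|_W$ back to $\bar D$, and restriction only gives the inequality in the wrong direction. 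To repair this you would need slices that are genuinely general among threefolds containing $V$ (say complete intersections of general $(1,1)$-hypersurfaces through $V$, at the cost of redoing the class bookkeeping for $V\subset W$), or simply conclude as the paper does from the covering family on the blow-up at $z_1,z_2$.
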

\begin{proof}
We prove the case where $z_1,z_2\neq p_j$, for every $j=1,\dots,s$. The cases where one among $z_1,z_2$ or both belong to the set of points $\{p_1,\dots,p_s\}$ the proof is simpler and left to the reader.

Consider the blow-up $\phi_{z_1,z_2}: Y_{z_1,z_2}\to X$ of $X$ at $z_1,z_2$. For $i=1,2$, we call $E_{z_i}$ the exceptional divisor of $z_i$ and $e_{z_i}$ the line class of $E_{z_i}$. Under this choice, the strict transform of $D$ is 
$$D_{z_1,z_2}=\phi_{z_1,z_2}^\ast D-m_{z_1}E_{z_1}-m_{z_2}E_{z_2}.$$
Also, the class of the strict transform of a curve of bidegree $(1,1)$ that passes through $z_1$ and $z_2$ is 
$$C_{z_1,z_2}=l_1+l_2-e_{z_1}-e_{z_2}.$$ 
Since the irreducible curves $C_{z_1,z_2}$ sweep out the strict transform of $\BL(z_1,z_2)$, then $D$ contains $\BL(z_1,z_2)$ in its base locus at least 
$$
\max\{0,-D_{z_1,z_2}\cdot C_{z_1,z_2}\}
$$
times. This concludes the proof.
\end{proof}

We can apply Lemma \ref{recipe-BLI-bilinear joins} to a number of interesting subvarieties of $\ourproductblowup{s}$. 

\subsubsection{Base locus inequalities for bilinear spans of points}
We first of obtain a base locus lemma for the strict transform on $X$ of the bilinear spans of a collection of points $\ourproduct$.
Notice that the strict transform of the bilinear span of $n+1$ points, $\BL(p_{i_1},\dots,p_{i_{n+1}})$, has divisorial class $H_2-\sum_{j=1}^{n+1}E_{i_j}$.

\begin{proposition}\label{BLI for bilinear spans} Let $D$ be an effective divisor on $\ourproductblowup{s}$ of the form 
\eqref{general form divisor}.
\begin{enumerate}
    \item[(a)]
Take $p_{i_1},\dots,p_{i_k}$ to be $k$ among the $s$ points blown-up to obtain $\ourproductblowup{s}$. Then $D$ contains the strict transform of $\BL(p_{i_1},\dots,p_{i_k})$ in its base locus at least
\begin{align*}
\kappa_{\BL(p_{i_1},\dots,p_{i_k})}(D):=\max\{0,m_{i_1}+\cdots+m_{i_k}-(k-1)(d_1+d_2)\}
\end{align*}
times.
 \item[(b)]
Moreover, if $n+1\le s\le n+2$, then the 
 multiplicity of containment of $F_2:=H_2-\sum_{j=1}^{n+1}E_{i_j}$ in the base locus of D is exactly
 \begin{align*}
    \kappa_{F_2}(D):=\max\{0,m_{i_1}+\cdots+m_{i_{n+1}}-n(d_1+d_2)\}.
\end{align*}
\end{enumerate}
\end{proposition}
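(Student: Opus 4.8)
The plan is to prove part (a) by induction on $k$ using the recipe Lemma \ref{recipe-BLI-bilinear joins}, and then to establish the sharper ``exactly'' statement of part (b) by exhibiting an effective divisor achieving the lower bound, so that the inequality from (a) becomes an equality.

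For part (a), the base case $k=1$ is trivial, and $k=2$ is precisely Lemma \ref{recipe-BLI-bilinear joins} applied to $z_1=p_{i_1}$, $z_2=p_{i_2}$, giving multiplicity $\max\{0,m_{i_1}+m_{i_2}-(d_1+d_2)\}$ as claimed. For the inductive step I would observe that $\BL(p_{i_1},\dots,p_{i_k})$ is the bilinear join $\BJ\bigl(\BL(p_{i_1},\dots,p_{i_{k-1}}),\,\{p_{i_k}\}\bigr)$, since the linear spans of the projections satisfy $L^j(p_{i_1},\dots,p_{i_k})=L^j(p_{i_1},\dots,p_{i_{k-1}})\ast p^j_{i_k}$ in each factor. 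The idea is then to sweep the bilinear span of the smaller set by curves of bidegree $(1,1)$ through a general point $z$ of $\BL(p_{i_1},\dots,p_{i_{k-1}})$ and the point $p_{i_k}$. By induction $D$ contains $\BL(p_{i_1},\dots,p_{i_{k-1}})$ with multiplicity at least $\kappa_{k-1}:=m_{i_1}+\cdots+m_{i_{k-1}}-(k-2)(d_1+d_2)$ (when positive), so a general such $z$ is a point of multiplicity at least $\kappa_{k-1}$ for $D$; applying the two-point estimate of Lemma \ref{recipe-BLI-bilinear joins} to the pair $(z,p_{i_k})$ with $m_z\ge\kappa_{k-1}$ and $m_{p_{i_k}}=m_{i_k}$ yields containment of $\BL(z,p_{i_k})$, and hence of the swept-out $\BL(p_{i_1},\dots,p_{i_k})$, with multiplicity at least $\kappa_{k-1}+m_{i_k}-(d_1+d_2)=m_{i_1}+\cdots+m_{i_k}-(k-1)(d_1+d_2)$, as required.

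For part (b), the lower bound $\kappa_{F_2}(D)$ is the special case $k=n+1$ of part (a), using that the strict transform of $\BL(p_{i_1},\dots,p_{i_{n+1}})$ has class $F_2=H_2-\sum_{j=1}^{n+1}E_{i_j}$, as noted before the statement. To prove this bound is \emph{exact} I would produce, for the given $D$, an effective divisor in $|D|$ containing $F_2$ with multiplicity exactly $\kappa_{F_2}(D)$. The natural candidate is a suitable combination of $\kappa_{F_2}(D)$ copies of the divisor $F_2$ together with a residual effective divisor that genuinely avoids containing $F_2$ further: writing $D'=D-\kappa_{F_2}(D)\,F_2$, the task reduces to showing that $D'$ is effective and does not contain $F_2$ in its base locus. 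Effectivity and the non-containment should follow by checking that the coefficients of $D'$ satisfy the effectivity inequalities of Lemma \ref{lemma-effectivityinequality}, together with the fact that the subtraction has reduced the relevant multiplicity sum $m_{i_1}+\cdots+m_{i_{n+1}}$ down to exactly $n(d_1+d_2)$ (when the bound is nonzero), so that one more copy of $F_2$ cannot be split off.

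I expect the main obstacle to be the upper bound in part (b): establishing that the containment cannot be strictly larger than $\kappa_{F_2}(D)$. The hypothesis $n+1\le s\le n+2$ is doing real work here, since it guarantees enough mobility — via Corollary \ref{corollary-ratcurvefewerpoints} — among the curve classes that cover $\ourproduct$ to pin the multiplicity precisely, which would fail with fewer points available. The delicate point will be verifying that after peeling off $\kappa_{F_2}(D)$ copies of $F_2$ the residual class $D'$ is represented by an effective divisor whose restriction to $F_2\cong X^{n,n}_{s-n-1}$ is nonzero; here I would use that this restriction is again a blowup of a product of projective spaces with few points (toric in the relevant range), so its effective cone is completely understood and the non-vanishing of the restricted linear system can be read off directly.
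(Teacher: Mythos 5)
Your part (a) is correct and is essentially the paper's own argument: induction on $k$, with the base case $k=2$ given by Lemma \ref{recipe-BLI-bilinear joins}, and the inductive step sweeping $\BL(p_{i_1},\dots,p_{i_k})$ by bilinear spans $\BL(z,p_{i_k})$ of a general point $z$ of the smaller span and $p_{i_k}$, combining the inductive multiplicity bound at $z$ with the two-point estimate. (One small point you should make explicit, as the paper does: when $\kappa_{k-1}=0$ or $m_z=0$ the claimed bound is $\le 0$ by the effectivity inequality $m_{i_k}\le d_1+d_2$, so the statement is vacuous there.)

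Part (b) is where your proposal has a genuine gap, precisely at the step you yourself flag as delicate. After setting $D'=D-\kappa_{F_2}(D)\,F_2$ (whose effectivity, incidentally, follows from part (a) rather than from Lemma \ref{lemma-effectivityinequality} --- that lemma gives only \emph{necessary} conditions for effectivity, so ``checking its inequalities'' proves nothing about $D'$ being effective), you must show that $F_2$ is \emph{not} a fixed component of $|D'|$. Your proposed method --- restrict to $F_2$, observe that $F_2$ is a blowup of $\PP^n\times\PP^n$ at few points with known (toric) effective cone, and ``read off'' non-vanishing of the restricted system --- does not accomplish this: knowing that the class $D'|_{F_2}$ lies in the effective cone of $F_2$ only tells you that $H^0(F_2,D'|_{F_2})\neq 0$; it does not tell you that the restriction map $H^0(X,D')\to H^0(F_2,D'|_{F_2})$ is nonzero, which is what ``$F_2$ is not a fixed component'' means. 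To close this one would need, e.g., a vanishing statement like $H^1(X,D'-F_2)=0$, or an explicit section of $D'$ not vanishing along $F_2$ --- neither of which your outline supplies. The paper avoids this trap constructively: it reduces to $s=n+2$ and to the case $K_{F_2}(D)=0$, uses the moving curve class $nl_1+(n+1)l_2-\sum_{i=1}^{n+2}e_i$ (Corollary \ref{corollary-ratcurvefewerpoints}(b)) to deduce $m_{n+2}\le d_2$, and then runs a combinatorial ``table'' decomposition in the style of \cite[Lemma 4.24]{CT06} to write $D$ as an explicit sum of $d_1$ irreducible divisors of type $H_1-\sum E_{i_k}$ and $d_2$ irreducible divisors of type $H_2-\sum E_{i_k}$, none of which equals $F_2$; this exhibits a member of $|D|$ not containing $F_2$, which is exactly the missing step in your argument.
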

\begin{proof}
We prove the first statement by induction on $k$. The initial case $k=2$ is a particular instance of Lemma \eqref{recipe-BLI-bilinear joins}: $D$ contains the strict transform of $\BL(p_{i_1},p_{i_2})$ in its base locus at least 
$\kappa_{\BL(p_{i_1},p_{i_2})}(D) =\max\{m_{i_1}+m_{i_2}-d_1-d_2\}$ times.

We assume the statement holds for $k-1$ and we prove it for $k$. 
Observe that the bilinear span $\BL(p_{i_1},\dots,p_{i_k})$ is obtained as the union of the bilinear spans of a point $z$ of $\BL(p_{i_1},\dots,p_{i_{k-1}})$ and of $p_{i_{k}}$. 
By Lemma  \ref{recipe-BLI-bilinear joins}, the bilinear span of $z$ on $p_{i_{k}}$ is contained in the base locus of $D$ at least 
\begin{equation}\label{intermediate-mult}
   \max\{0,m_{z}+m_{i_{k}}-(d_1+d_2)\} 
\end{equation}
times, where $m_z$ is the multiplicity of $D$ at $z$, and so the number \eqref{intermediate-mult} is a lower bound for $\kappa_{\BL(p_{i_1},\dots,p_{i_k})}(D)$.
If $m_z=0$, since  $m_{i_{k}}\le d_1+d_2$ by the effectivity assumption on $D$, then \eqref{intermediate-mult} is zero and the statement holds trivially. 
Assume now that $m_z>0$.
By the induction hypothesis, we have 
\begin{equation}\label{intermediate-mult-2}
m_{z}\ge \kappa_{\BL(p_{i_1},\dots,p_{i_{k-1}})}(D)=\max\{0,m_{i_1}+\cdots+m_{i_{k-1}}-(k-2)(d_1+d_2)\}.
\end{equation}
Combining formulae \eqref{intermediate-mult} and \eqref{intermediate-mult-2}, we obtain the first statement.

We now prove the second statement. Let $F_2$ be the divisorial class of the strict transform of the bilinear span of $n+1$ points. Modulo reordering the points, we may assume that $\{i_1,\dots, i_{n+1}\}=\{1,\dots, {n+1}\}$, so that $F_2=H_2-\sum_{j=1}^{n+1}E_i$. Set $$K_{F_2}(D):=\sum_{i=1}^{n+1}m_i-n(d_1+d_2),$$ so that we can write $\kappa_{F_2}(D)=\max\{0,K_{F_2}(D)\}$.

We claim that if $K_{F_2}(D)\le0$ or, equivalently, if $\kappa_{F_2}(D)=0$, then $F_2$ is not contained in the base locus of $D$.
Accepting the claim and observing that the integer $K_{F_2}(D)$ drops by one after subtracting $F_2$, i.e. $K_{F_2}(D-F_2)=K_{F_2}(D)-1$, we conclude that the multiplicity of containment of $F_2$ in $D$ is exactly  $\kappa_{F_2}(D)$.

We now prove the claim. If $s< n+1$
there is nothing to prove. If $s=n+1$, we may consider $D$ as a divisor on $X^{n,n+1}_{n+2}$ by setting  $m_{n+2}=0$. Therefore it is enough to prove the statement for $s=n+2$, which we now do.

Since $|D+E_i|=|D|+E_i$, where $E_i$ is an exceptional divisor, it is enough to prove the statement for effective divisors $D$ with $K_{F_2}(D)=0$. 
By Corollary \ref{corollary-ratcurvefewerpoints} (b), the curve class of degree $(n,n+1)$ passing through all $n+2$ points is moving. Therefore, since $D$ is effective, its intersection with such a curve class must be non-negative. This gives the following inequality.
$$
\sum_{i=1}^{n+2}m_i- nd_1-(n+1)d_2\le0.
$$
This, together with the assumption $K_{F_1}(D)=0$, implies that $m_{n+2}\le d_2$.

Now, the claim will be proved  adapting to this case an idea used in \cite[Lemma 4.24]{CT06} to obtain a decomposition of an effective divisor on $X^{n,0}_{n+2}$ as a sum of hyperplanes; because of its technical nature, we will  illustrate this procedure in Example \ref{example-procedure-CT} below.

We consider a table with $d_2+d_1$ rows and $n+1$ columns. We fill the entries of the columns each with an exceptional divisor starting from the first column and filling it top to bottom, then filling the second column etc. as follows: start by writing  $m_1$ times $E_1$, then $m_2$ times $E_2$, etc. until we will have placed $E_{n+1}$ exactly $m_{n+1}$ times. Since $K_{F_2}(D)=0$, we will have filled all cells of the first $n$ columns. We then consider the last column and we fill the entries on the first $d_1$ rows with zeroes and the entries of the bottom $d_2$ rows with the exceptional divisor $E_{n+2}$ exactly $m_{n+2}$ times and then with $d_2-m_{n+2}\ge0$ zeroes. 
Since $D$ is effective, we have $m_i\le d_1+d_2$; this condition guarantees that in each row there are no repetitions. This procedure will be illustrated in an example below.

This yields a decomposition of $D$ as a sum of $d_1$ divisor classes of type $H_1-\sum_{k=1}^nE_{i_k}$ and $d_2$ divisor classes of type $H_2-\sum_{k=1}^{s}E_{i_k}$ with, $s\in\{n,n+1\}$, whose sum is linearly equivalent to $D$. 
By construction, $F_2$ does not appear in this list, hence it is not a divisorial component of the base locus of $D$. This completes the proof. 
\end{proof}

\begin{example}\label{example-procedure-CT}
 The procedure adopted in the second part of the proof of Proposition \ref{BLI for bilinear spans} can be illustrated as follows, where, for this purpose, we choose $n=4$ and $D=2H_1+3H_2-5E_1-5E_2-4E_3-3E_4-3E_5-2E_6$:

$$
\left[
\begin{array}{l|l|l|l||l}
E_1&E_2&E_3&E_4&0\\
E_1&E_2&E_3&E_4&0\\
\hline
E_1&E_2&E_3&E_5&E_6\\
E_1&E_2&E_3&E_5&E_6\\
E_1&E_2&E_4&E_5&0\\
\end{array}
\right]\quad \rightarrow \quad 
\left. \begin{array}{ll}
&H_1-(E_1+E_2+E_3+E_4)\\
+&H_1-(E_1+E_2+E_3+E_4)\\
+&H_2-(E_1+E_2+E_3+E_5+E_6)\\
+&H_2-(E_1+E_2+E_3+E_5+E_6)\\
+&H_2-(E_1+E_2+E_4+E_5)
\end{array}\right\}
 = D
$$
\end{example}

\subsubsection{Base locus inequalities for bilinear secant varieties}
Now, assuming $s=n+3$, we give a base locus lemma for the bilinear secant varieties of the rational curve of degree $(n,n+1)$ passing through $p_1,\dots,p_{n+3}$, whose class is 
$$C:=nl_1+(n+1)l_2-\sum_{i=1}^{n+3}e_i.
$$

\begin{proposition} \label{prop-bli-bisecant}
Let $\BS_k(C)\subset X^{n,n+1}_{n+3}$ be the $k$th bilinear secant of $C$. The $D$, an effective divisor of the form \eqref{general form divisor}, contains $\BS_k(C)$ in its base locus at least
$$
\kappa_{\BS_k(C)}(D):=\max\{0,k\sum_{i=1}^{n+3}m_i-(nk+k-1)d_1-(nk+2k-1)d_2\}
$$ times.
\end{proposition}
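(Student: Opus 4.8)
The plan is to prove this by induction on $k$, using the recipe of Lemma~\ref{recipe-BLI-bilinear joins} together with the recursive description $\BS_{k}(C)=\BJ(\BS_{k-1}(C),C)$ established implicitly in the proof of Proposition~\ref{proposition-bilinearsecantdim}. The base case $k=1$ should say that $D$ contains $C=\BS_1(C)$ in its base locus at least $\kappa_{\BS_1(C)}(D)=\max\{0,\sum_i m_i-(n-1)d_1-nd_2\}$ times; this I would prove directly by intersecting $D$ with a moving curve. The natural curve to use is the distinguished rational curve $C$ itself, or rather a family of curves of bidegree $(n,n+1)$ through $n+2$ of the points that sweeps out a dense subset and also meets $C$; by Corollary~\ref{corollary-ratcurvefewerpoints}(b) such families are available, and intersecting $D$ with the proper transform of such a curve gives the containment multiplicity via the standard ``sweeping curve'' argument already used in Lemmas~\ref{lemma-effectivityinequality} and~\ref{bli-exceptional}.

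For the inductive step I would assume the bound holds for $\BS_{k-1}(C)$ and deduce it for $\BS_k(C)$. Since $\BS_k(C)=\BJ(\BS_{k-1}(C),C)$, a general point of $\BS_k(C)$ lies on a bilinear span $\BL(z,r)$ where $z\in\BS_{k-1}(C)$ and $r\in C$. The idea is to apply Lemma~\ref{recipe-BLI-bilinear joins} to the two points $z$ and $r$: $D$ contains $\BL(z,r)$ in its base locus at least $\max\{0,m_z+m_r-d_1-d_2\}$ times, where $m_z$ and $m_r$ are the multiplicities of $D$ at $z$ and $r$ respectively. By the inductive hypothesis $m_z\ge\kappa_{\BS_{k-1}(C)}(D)$ and by the $k=1$ case $m_r\ge\kappa_{\BS_1(C)}(D)=\kappa_C(D)$ (the multiplicity of $D$ along $C$). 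Substituting these lower bounds and checking that
\begin{align*}
\bigl[(k-1)\textstyle\sum_i m_i-(n(k-1)+(k-1)-1)d_1-(n(k-1)+2(k-1)-1)d_2\bigr]&\\
+\bigl[\textstyle\sum_i m_i-(n-1)d_1-nd_2\bigr]-d_1-d_2&
\end{align*}
simplifies to $k\sum_i m_i-(nk+k-1)d_1-(nk+2k-1)d_2$ would give the claimed bound, modulo the truncation at zero. One then argues, as in the proof of Proposition~\ref{BLI for bilinear spans}, that the containment multiplicities add: if either input multiplicity is zero the inequality is forced to hold trivially by effectivity, and if both are positive the estimate above applies.

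The main obstacle I anticipate is making the additivity of multiplicities along bilinear spans fully rigorous. Lemma~\ref{recipe-BLI-bilinear joins} controls a single bilinear span $\BL(z,r)$, but $\BS_k(C)$ is the closure of the union of such spans over a positive-dimensional family, and one must argue that the generic multiplicity of $D$ along the whole secant variety is at least the bound obtained fibrewise. The cleanest route is to fix a general point $z\in\BS_{k-1}(C)$ and a general $r\in C$ so that $\BL(z,r)$ passes through a general point of $\BS_k(C)$, and observe that because $D$ contains $\BS_{k-1}(C)$ with multiplicity $\ge m_z$ (generically) and $C$ with multiplicity $\ge m_r$, subtracting the appropriate multiple of $\BS_{k-1}(C)$ or of $C$ from $D$ keeps it effective; the sweeping-curve argument with $C_{z,r}=l_1+l_2-e_z-e_r$ then propagates the combined bound to a general point of $\BS_k(C)$. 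I would also need to double-check the precise coefficient bookkeeping, in particular that the distinguished curve $C$ has the stated class $nl_1+(n+1)l_2-\sum e_i$ so that $\kappa_{\BS_1(C)}(D)=\max\{0,\sum_i m_i-(n-1)d_1-nd_2\}$ matches the $k=1$ specialisation of the target formula, since a sign or index error here would propagate through the entire induction.
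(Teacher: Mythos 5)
Your strategy---induction on $k$, a base case read off from $-D\cdot C$, and an inductive step applying Lemma \ref{recipe-BLI-bilinear joins} to a general $z\in\BS_{k-1}(C)$ and $r\in C$, with semicontinuity to pass to the closure and the effectivity bound $m_z\le d_1+d_2$ to handle vanishing multiplicities---is exactly the route the paper takes. However, as written your proof fails on a concrete numerical point that you flagged but resolved the wrong way. Since $C$ has class $nl_1+(n+1)l_2-\sum_{i=1}^{n+3}e_i$, the base case is $\kappa_C(D)=\max\{0,-D\cdot C\}=\max\{0,\sum_i m_i-nd_1-(n+1)d_2\}$, and this is precisely what the target formula gives at $k=1$ (the coefficients are $nk+k-1=n$ and $nk+2k-1=n+1$). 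Your stated base case $\max\{0,\sum_i m_i-(n-1)d_1-nd_2\}$ is larger by $d_1+d_2$ and is actually false: on $X^{2,3}_5$ take $D=H_1+H_2-E_1-E_2-E_3-E_4$, for which your formula forces $C\subset\Bs(D)$ with multiplicity $\ge 1$; but $D$ has effective members of the form $(H_1-E_1-E_2)+(H_2-E_3-E_4)$, and no such member contains $C$, because $\pi_1(C)$ is an irreducible conic (not contained in a line) and $\pi_2(C)$ is a nondegenerate twisted cubic (not contained in a hyperplane). Hence $C\not\subset\Bs(D)$.

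This error propagates into your inductive step: the displayed sum does not ``simplify to'' the target expression; it equals $k\sum_i m_i-(nk+k-2)d_1-(nk+2k-2)d_2$, which exceeds the target by $d_1+d_2$. With the corrected base case the bookkeeping closes up exactly, since $\bigl[(k-1)\sum_i m_i-(n(k-1)+k-2)d_1-(n(k-1)+2k-3)d_2\bigr]+\bigl[\sum_i m_i-nd_1-(n+1)d_2\bigr]-(d_1+d_2)$ equals $k\sum_i m_i-(nk+k-1)d_1-(nk+2k-1)d_2$, which is what the paper verifies. A secondary point: your proposed justification of the base case via curves of bidegree $(n,n+1)$ through $n+2$ of the points that merely \emph{meet} $C$ only yields effectivity inequalities $D\cdot C'\ge 0$; the multiplicity bound along $C$ itself comes from the standard fact that an effective divisor contains an irreducible curve in its base locus with multiplicity at least $-D\cdot C$, which is how the paper states the initial case. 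Once these two points are repaired, your argument coincides with the paper's proof.
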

\begin{proof}
We prove the statement by induction on $k$.
The initial case is $k=1$ and we have that the multiplicity of $D$ along $\BS_1(C)=C$ is 
$$
\kappa_C(D)=\max\{0,-D\cdot C\}= \max\{\sum_{i=1}^{n+3}m_i-nd_1-(n+1)d_2\}.
$$

We assume the statement for $k-1$ and we prove it for $k$.
By definition, $\BS_k(C)$ is the bilinear join of $\BS_{k-1}(C)$ and $C$. In particular  $\BS_k(C)$ is the Zariski closure of the union of the linear spans $L(z_1,z_2)$ with $z_1\in BS_{k-1}(C)$ and $z_2\in C$.
We will prove that the multiplicity of $D$ along the bilinear span $\BL(z_1,z_2)$ is at least the claimed one $\kappa_{\BS_k(C)}(D)$. Since the multiplicity is semicontinuous, we will be able conclude that the same holds also for the boundary points of $\BS_k(C)$.

We now prove the claim.
Assume that $z_1,z_2$ are general points. 
Then 
\begin{align*} 
m_{z_1}&\ge\kappa_{\BS_{k-1}(C)}(D)\\
&=\max\{0,(k-1)\sum_{i=1}^{n+3}m_i-(n(k-1)+k-2)d_1-(n(k-1)+2k-3)d_2\}
\end{align*}
by the induction hypothesis, while 
$m_{z_2}\ge \kappa_C(D)$, by the initial case.

By Lemma \ref{recipe-BLI-bilinear joins}, we have that the multiplicity of $D$ along $\BL(z_1,z_2)$ is at least 
$$
\max\{0,m_{z_1}+m_{z_2}-d_1-d_2\}
$$
 If $m_{z_1}=0$ or $m_{z_2}=0$, then the above number is zero by the effectivity condition $m_{z_i}\le d_1+d_2$, hence the statement follows trivially. If $m_{z_1}, \, m_{z_2}>0$, the statement follows from a simple calculation that uses the three above formulas. \end{proof}

\subsubsection{Base locus inequalities for bilinear joins}
In a similar fashion, we can prove a base locus lemma for any bilinear join between a bilinear secant variety and the bilinear span of a set of points.

\begin{proposition}\label{prop-bli-bilinearjoins}
The bilinear join  of $\BS_k(C)$ and the bilinear span $\BL(p_{i_1},\dots,p_{i_l})$, which is denoted by $\BJ(\BS_k(C),\BL(p_{i_1},\dots,p_{i_l}))\subset X^{n,n+1}_{n+3}$, 
is contained  in the base locus of an effective divisor $D$ of the form \eqref{general form divisor} at least 
\[
\kappa_{\BJ(\BS_k(C),\BL(p_{i_1},\dots,p_{i_l}))}(D):=\max\{0, k\sum_{i=1}^{n+3}m_i+\sum_{j=1}^lm_{i_j}-(nk+k+l-1)d_1-(nk+2k+l-1)d_2\}
\] times. 
\end{proposition}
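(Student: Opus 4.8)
The plan is to follow the template of the proofs of Propositions \ref{prop-bli-bisecant} and \ref{BLI for bilinear spans}, but with no induction required, since the two base locus bounds to be combined are already in hand. First I would recall that, by definition, $\BJ(\BS_k(C),\BL(p_{i_1},\dots,p_{i_l}))$ is the Zariski closure of the union of the bilinear spans $\BL(z_1,z_2)$ taken over general points $z_1\in\BS_k(C)$ and $z_2\in\BL(p_{i_1},\dots,p_{i_l})$. As in the earlier proofs, since multiplicity along a subvariety is semicontinuous, it suffices to bound the multiplicity of $D$ along such a general span $\BL(z_1,z_2)$; the bound then propagates to the boundary and hence to the whole join.

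Next I would apply Lemma \ref{recipe-BLI-bilinear joins} to the two general points $z_1,z_2$, which asserts that $D$ contains $\BL(z_1,z_2)$ in its base locus at least $\max\{0,m_{z_1}+m_{z_2}-d_1-d_2\}$ times, where $m_{z_i}$ denotes the multiplicity of $D$ at $z_i$. The two key inputs are the preceding propositions: for general $z_1\in\BS_k(C)$, Proposition \ref{prop-bli-bisecant} gives $m_{z_1}\ge\kappa_{\BS_k(C)}(D)$, while for general $z_2\in\BL(p_{i_1},\dots,p_{i_l})$, Proposition \ref{BLI for bilinear spans}(a) gives $m_{z_2}\ge\kappa_{\BL(p_{i_1},\dots,p_{i_l})}(D)$.

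It then remains to combine these bounds, and here the cleanest route avoids any case split: since $\max\{0,x\}\ge x$, each $\kappa$ dominates its defining linear expression, so
\[
m_{z_1}+m_{z_2}-d_1-d_2\ \ge\ \kappa_{\BS_k(C)}(D)+\kappa_{\BL(p_{i_1},\dots,p_{i_l})}(D)-d_1-d_2
\]
is in turn bounded below by
\[
k\sum_{i=1}^{n+3}m_i-(nk+k-1)d_1-(nk+2k-1)d_2+\sum_{j=1}^l m_{i_j}-(l-1)(d_1+d_2)-d_1-d_2,
\]
where I have substituted the two explicit linear expressions underlying $\kappa_{\BS_k(C)}(D)$ and $\kappa_{\BL(p_{i_1},\dots,p_{i_l})}(D)$. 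Collecting terms, the $m_i$-part is $k\sum_{i=1}^{n+3}m_i+\sum_{j=1}^l m_{i_j}$, the coefficient of $d_1$ collapses to $-(nk+k+l-1)$, and that of $d_2$ to $-(nk+2k+l-1)$; this is precisely the argument of the outer maximum in the target formula. Taking $\max\{0,-\}$ of both sides of the chain then yields multiplicity $\ge\kappa_{\BJ(\BS_k(C),\BL(p_{i_1},\dots,p_{i_l}))}(D)$, as claimed, and semicontinuity extends this from the general span to the entire join.

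The main obstacle is purely bookkeeping: correctly handling the truncation by $\max\{0,\cdot\}$ so that the degenerate cases (when one of the two $\kappa$'s vanishes) do not break the estimate, and checking that the single-point multiplicity bound $m_z\le d_1+d_2$ of Lemma \ref{lemma-effectivityinequality}(a) is available at a general point of each factor, not merely at the blown-up $p_i$. If one prefers to mirror the case analysis of Proposition \ref{prop-bli-bisecant} rather than use the inequality $\max\{0,x\}\ge x$, one checks that whenever $m_{z_1}=0$ or $m_{z_2}=0$ the effectivity inequalities force the target expression to be nonpositive, so the claim is trivial there, and one runs the arithmetic above only in the remaining case $m_{z_1},m_{z_2}>0$. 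The arithmetic itself is immediate.
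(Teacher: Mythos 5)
Your proof is correct and follows essentially the same route as the paper's: cover the join by bilinear spans $\BL(z_1,z_2)$ of general points of the two factors, bound $m_{z_1}$ and $m_{z_2}$ below via Proposition \ref{prop-bli-bisecant} and Proposition \ref{BLI for bilinear spans}(a), combine using Lemma \ref{recipe-BLI-bilinear joins}, check the identity $K_{\BJ}=K_{\BS_k(C)}+K_{\BL}-(d_1+d_2)$, and conclude by semicontinuity. The only difference is your handling of the truncation: the paper splits into cases according to whether one of the two $\kappa$'s vanishes and invokes the effectivity bounds $m_{z_i}\le d_1+d_2$ to dispose of the degenerate case, whereas your observation that $\kappa=\max\{0,K\}\ge K$ together with monotonicity of $x\mapsto\max\{0,x\}$ makes both the case split and the effectivity inequalities unnecessary — a small but genuine streamlining of the same argument.
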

\begin{proof}
By definition $\BJ(\BS_k(C),\BL(p_{i_1},\dots,p_{i_l}))$ is covered by the bilinear spans of a point $z_1$ of $\BS_k(C)$ and a point $z_z$ of $\BL(p_{i_1},\dots,p_{i_l})$.
We introduce the following integers:
\begin{align*}
K_{\BJ(\BS_k(C),\BL(p_{i_1},\dots,p_{i_l}))}(D)&:= k\sum_{i=1}^{n+3}m_i+\sum_{j=1}^lm_{i_j}-(nk+k+l-1)d_1-(nk+2k+l-1)d_2\\
K_{\BS_k(C)}(D) &:= k\sum_{i=1}^{n+3}m_i-(nk+k-1)d_1-(nk+2k-1)d_2\\
K_{\BL(p_{i_1},\dots,p_{i_l})}(D) &:= 0, \, m_{i_1}+\cdots+m_{i_l}-(l-1)(d_1+d_2).
\end{align*}
By Lemma \ref{prop-bli-bisecant} and Lemma  \ref{recipe-BLI-bilinear joins}, the multiplicities $m_{z_1}$ and $m_{z_2}$ of $D$ at $z_1$ and $z_2$ respectively satisfy
\begin{align*}
m_{z_1} &\geq \kappa_{\BS_k(C)}(D) = \max\{0, \, K_{\BS_k(C)}(D)2\},\\
m_{z_2} &\geq \kappa_{\BL(p_{i_1},\dots,p_{i_l})}(D) = \max\{0, K_{\BL(p_{i_1},\dots,p_{i_l})}(D)\}.
\end{align*}
Moreover, it is easy to check that $$K_{\BJ(\BS_k(C),\BL(p_{i_1},\dots,p_{i_l}))}(D)=K_{\BS_k(C)}(D)+K_{\BL(p_{i_1},\dots,p_{i_l})}-(d_1+d_2).$$

Note that by the effectivity hypothesis we must have $m_{z_1}, \, m_{z_2} \leq d_1+d_2$. In particular this implies that $K_{\BS_k(C)}(D) \leq d_1+d_2$ and  $K_{\BL(p_{i_1},\dots,p_{i_l})}(D) \leq d_1+d_2$.

If $\kappa_{\BS_k(C)}(D)=0$ or $\kappa_{\BL(p_{i_1},\dots,p_{i_l})}(D)=0$ the statement follows trivially from the effectivity hypothesis which implies that $m_{z_1}, \, m_{z_2}\le d_1+d_2$, hence the integer in the statement is zero.
Assuming that $m_{z_1},m_{z_2}>0$, we have 
\begin{align*}
m_{z_1}+m_{z_2}-(d_1+d_2)
&=k\sum_{i=1}^{n+3}m_i+\sum_{j=1}^lm_{i_j}-(nk+k+l-1)d_1-(nk+2k+l-1)d_2\\
&=K_{\BJ(\BS_k(C),\BL(p_{i_1},\dots,p_{i_l}))}(D)
\end{align*}
Using Lemma \ref{recipe-BLI-bilinear joins}, we conclude.
\end{proof}


\subsection{Base locus inequalities for pulled-back divisors}

As in Notation \ref{notation:fibres}, $p_1^j,\dots,p_s^j$ are the images of the  points $p_1,\dots,p_s\in\PP^n\times\PP^{n+1}$ via the natural projection  to the $j$th factor, for $j\in\{1,2\}$, and  $\pi_j:X^{n,n+1}_s\to\PP^{n-1+j}$ is the composition of the blow-up map with the projection to the $j$th factor. Recall that  $\Pi_{\{i\}}^j$ denotes the strict transforms under the blow-up map of the fibre of $p_i^j$ via the projection map to the $j$th factor, for $i=1,\dots,s$. 
 Each $\Pi_{\{i\}}^j\subseteq X^{n,n+1}_s$ is isomorphic to a $\PP^{n+2-j}$ blown-up at a point.
Let $\overline{\phi}_j:\overline{X^{n,n+1}_s}^j\to X^{n,n+1}_{s}$ be the blow-up along all $\Pi_i^j$'s and denote with $E_{\Pi^j_{\{i\}}}$ the corresponding exceptional divisors.

Now fix $j=1$.
The composition $\pi_1\circ\overline{\phi}_1:\overline{X^{n,n+1}_s}^1\to\PP^n$ factors through $X^{n,0}_{s}$,  the blow-up of $\PP^n$ at the points $p_i^1$, by the universal property of the blow-up.
Let $\overline{\pi}_1: \overline{X^{n,n+1}_{s}}^1 \to X^{n,0}_{s}$ be the corresponding map. These maps fit in the following diagram:
\[\begin{tikzcd}
\overline{X^{n,n+1}_s}^1  \arrow{r}{\overline{\phi}_1} 
\arrow{rd}  \arrow{d}{\overline{\pi}_1}
& X^{n,n+1}_{s} \arrow{d}{\pi_1}  \\
 X^{n,0}_{s} \arrow{r}{} & \PP^n
\end{tikzcd}
\]
For $j=2$, via a similar construction, we obtain the map $\overline{\pi}_2: \overline{X^{n,n+1}_{s}}^2 \to X^{0,n+1}_{s}$.

We will say that a divisor $D$ on $X^{n,n+1}_s$ is a \emph{pull-back} from the $j$th factor if it is of the form:
$$
d_jH_j-\sum_{i=1}^s m_i E_i.
$$
We will be interested in base locus inequalities for fixed pull-back divisors. 

Let $D$ be a general effective divisor on $X$ of class \eqref{general form divisor}. Its strict transform on $\overline{X^{n,n+1}_{s}}^j$ is
\begin{equation}\label{strict-transform-on-overlineX} 
\overline{D}^j=d_1H_1+d_2H_2-\sum_{i=1}^sm_iE_i-\sum_{i=1}^s\max\{0,m_i-d_{3-j}\}E_{\Pi_{\{i\}}^j} 
\end{equation}
where, by abusing notation, we denote by $E_i$ both an exceptional divisor on ${X}^{n,n+1}_s$ and its pull-back on $\overline{X^{n,n+1}_{s}}^j$.

Now let $\mathcal{H}^{j}$ be the strict transform on $\overline{X^{n,n+1}_{s}}^j$ of a general fibre of the projection $\pi_{3-j}$: it is a copy of $\PP^{n-1+j}$ blown-up at $s$ points, and the exceptional divisors are $E_{\Pi^j_{\{i\}}}|_{\mathcal{H}^{j}}$.

\begin{lemma}\label{bli for pullbacks}
Let $D$ be an effective divisor on $X$. Let $j\in\{1,2\}$ and let $F_j$ be a reduced and irreducible pull-back divisor of the form $F_j=d_jH_j-\sum_{i=1}^sm_iE_i$. Then the multiplicity of containment of $F_j$ in the base locus of $D$ is at least the multiplicity of containment of $\overline{F_j}|_{\mathcal{H}^j}$ in the base locus of $\overline{D}^j|_{\mathcal{H}^j}$.

\end{lemma}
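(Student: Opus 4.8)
The plan is to bound the coefficient of $F_j$ in the fixed divisor $\operatorname{Fix}|D|$ from below, member by member, by transporting the computation first to $\overline{X^{n,n+1}_s}^j$, where $\overline{F_j}$ becomes vertical for $\overline{\pi}_j$, and then onto the general fibre $\mathcal{H}^j$, which is a section of $\overline{\pi}_j$. Write $t':=\operatorname{ord}_{\overline{F_j}|_{\mathcal{H}^j}}\operatorname{Fix}|\overline{D}^j|_{\mathcal{H}^j}|$ for the right-hand quantity; it depends only on the class $\overline{D}^j|_{\mathcal{H}^j}$, hence is the same for every general fibre. Since the multiplicity of containment of $F_j$ in $\operatorname{Bs}|D|$ equals $\min_{D'\in|D|}\operatorname{ord}_{F_j}(D')$, it suffices to prove $\operatorname{ord}_{F_j}(D')\ge t'$ for every effective $D'\sim D$.

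First I would pass to $\overline{X^{n,n+1}_s}^j$. Because the blown-up centres $\Pi^j_{\{i\}}$ have codimension at least two and the irreducible pull-back divisor $F_j$ is not contained in any of them, $\overline{\phi}_j$ is an isomorphism at the generic point of $F_j$; hence the strict transform $\overline{D'}^j$ satisfies $\operatorname{ord}_{\overline{F_j}}(\overline{D'}^j)=\operatorname{ord}_{F_j}(D')$. Moreover, comparing with \eqref{strict-transform-on-overlineX}, the class $\overline{D'}^j$ differs from $\overline{D}^j$ only by a non-negative combination of the exceptional divisors $E_{\Pi^j_{\{i\}}}$. I also record that $\mathcal{H}^j$, being a general fibre of $\pi_{3-j}$, maps isomorphically to $X^{n,0}_s$ (resp. $X^{0,n+1}_s$) under $\overline{\pi}_j$, so it is a section, and that $\overline{F_j}=\overline{\pi}_j^{*}\widetilde{F_j}$ is vertical, where $\widetilde{F_j}=\overline{F_j}|_{\mathcal{H}^j}$ is the irreducible divisor on $X^{n,0}_s$ corresponding to $F_j$.

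Now I restrict to a general $\mathcal{H}^j$. Writing $\overline{D'}^j=a\,\overline{F_j}+R$ with $a=\operatorname{ord}_{\overline{F_j}}(\overline{D'}^j)$ and $R$ effective not containing $\overline{F_j}$, transversality of the general section gives $\overline{D'}^j|_{\mathcal{H}^j}=a\,\widetilde{F_j}+R|_{\mathcal{H}^j}$. The key point, discussed below, is that for general $\mathcal{H}^j$ the divisor $R|_{\mathcal{H}^j}$ does not contain $\widetilde{F_j}$, so that $\operatorname{ord}_{\widetilde{F_j}}(\overline{D'}^j|_{\mathcal{H}^j})=a=\operatorname{ord}_{F_j}(D')$. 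Finally, $\overline{D'}^j|_{\mathcal{H}^j}$ coincides with a member of $|\overline{D}^j|_{\mathcal{H}^j}|$ up to the restrictions of the $E_{\Pi^j_{\{i\}}}$, and these exceptional classes have order $0$ along $\widetilde{F_j}$ (since $d_j\ge 1$ forces $\widetilde{F_j}$ to carry an $H_j$-component and hence to differ from every exceptional divisor of $\mathcal{H}^j$); therefore $\operatorname{ord}_{\widetilde{F_j}}(\overline{D'}^j|_{\mathcal{H}^j})\ge t'$. Chaining the three relations yields $\operatorname{ord}_{F_j}(D')\ge t'$, and taking the minimum over $D'$ proves the lemma.

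The main obstacle is the claim that, for a general fibre $\mathcal{H}^j$, the divisor $R|_{\mathcal{H}^j}$ does not contain $\widetilde{F_j}$. This is precisely where the generality of $\mathcal{H}^j$ (equivalently, of the point $q\in\PP^{n+2-j}$ defining the fibre of $\pi_{3-j}$) enters: as $\mathcal{H}^j$ varies, the sections $\overline{F_j}\cap\mathcal{H}^j\cong\widetilde{F_j}$ sweep out the vertical divisor $\overline{F_j}$, so a general section meets the fibre of $\overline{\pi}_j$ over the generic point of $\widetilde{F_j}$ in a general point. Any prime component of $R$ that did contain $\widetilde{F_j}$ after restriction, for general $\mathcal{H}^j$, would then contain a dense subset of $\overline{F_j}$ and hence all of $\overline{F_j}$, forcing it to equal $\overline{F_j}$ --- contradicting $\overline{F_j}\not\subseteq R$. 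A minor subtlety is that the admissible fibre may depend on $D'$; this is harmless, because $t'$ is independent of the choice of general fibre, so for each $D'$ one is free to choose $\mathcal{H}^j$ general with respect to the finitely many components of $\overline{D'}^j$.
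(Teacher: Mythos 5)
Your proof is correct and takes essentially the same approach as the paper: restrict to a general fibre $\mathcal{H}^j$, use that restriction there preserves the order of a member of $|D|$ along $F_j$, and that the restricted divisor is a member of $|\overline{D}^j|_{\mathcal{H}^j}|$ up to exceptional components that do not contribute along $\overline{F_j}|_{\mathcal{H}^j}$. The paper packages this by choosing one member $D'$ realizing the minimal multiplicity $m_{F_j}$ and concluding that the base-locus multiplicity downstairs is at most $m_{F_j}$, whereas you argue member-by-member; your write-up simply makes explicit the points (the class discrepancy by the $E_{\Pi^j_{\{i\}}}$, and the generality of $\mathcal{H}^j$ ensuring the residual part does not restrict to contain $\overline{F_j}|_{\mathcal{H}^j}$) that the paper's terse proof leaves implicit.
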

\begin{proof}
Write $m_{F_j}$ for the multiplicity of containment of $F_j$ in the base locus of $D$. Fix an effective divisor $D' \in |D|$ such that $D'$ has multiplicity $m_{F_j}$ along $F_j$. For a general choice of $\mathcal{H}^j$ the divisor $\overline{D'}^j|_{\mathcal{H}^j}$ has multiplicity $m_{F_j}$ along $\overline{F_j}|_{\mathcal{H}^j}$. Therefore the multiplicity of containment of $\overline{F_j}|_{\mathcal{H}^j}$ in the base locus of $\overline{D}^j|_{\mathcal{H}^j}$ is at most $m_{F_j}$.
\end{proof}

We can now apply Lemma \ref{bli for pullbacks} to obtain specific base locus lemmas for fixed pull-back divisors.

\subsubsection{Pull-back hyperplanes}
As a first application,  we obtain a lower bound to the multiplicity of containment of fixed divisors of degree $(1,0)$ in the base locus of an effective divisor. As we will see, this is the in fact the exact multiplicity of containment.

\begin{proposition}\label{BLI pulledback hyperplanes}
Let $D$ be an effective divisor on $X^{n,n+1}_s$.
\begin{enumerate}
 \item[(a)]
 If $s\ge n$, then the  pulled-back fixed hyperplane $F_1=H_1-\sum_{k=1}^nE_{i_k}$ is contained is the base locus of $D$ at least 
 \begin{equation}\label{bli-formula-F1}
\kappa_{F_1}(D):=\max\{0,\sum_{j=1}^n m_{i_j}-(n-1)d_1-nd_2\}
 \end{equation}
 times. 

 \item[(b)]
If $n\le s\le n+2$, then the multiplicity of containment of $F_1$ in the base locus of $D$ is exactly equal to \eqref{bli-formula-F1}.
\end{enumerate}
 \end{proposition}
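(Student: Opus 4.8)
The plan is to deduce part (a) from Lemma~\ref{bli for pullbacks} together with a curve computation on $X^{n,0}_s$, and to prove the reverse inequality in part (b) by an explicit decomposition of $D$ into pulled-back hyperplanes, in the spirit of Proposition~\ref{BLI for bilinear spans}(b) and Example~\ref{example-procedure-CT}.

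For (a), reorder so that $J=\{i_1,\dots,i_n\}$ and note that $F_1$ is the reduced irreducible strict transform of the hyperplane spanned by $p^1_{i_1},\dots,p^1_{i_n}$, so Lemma~\ref{bli for pullbacks} applies. Let $\mathcal H^1\cong X^{n,0}_s$ be a general fibre as in that lemma, with hyperplane class $h$ and exceptional divisors $\epsilon_i=E_{\Pi^1_{\{i\}}}|_{\mathcal H^1}$. Since $H_2$ and each $E_i$ restrict to $0$ on $\mathcal H^1$, formula~\eqref{strict-transform-on-overlineX} gives
\[
\overline{D}^1|_{\mathcal H^1}=d_1h-\sum_{i=1}^s\max\{0,m_i-d_2\}\,\epsilon_i,\qquad \overline{F_1}|_{\mathcal H^1}=h-\sum_{k=1}^n\epsilon_{i_k}.
\]
By Lemma~\ref{bli for pullbacks} it is enough to bound from below the multiplicity of $\overline{F_1}|_{\mathcal H^1}$ in the base locus of $G:=\overline{D}^1|_{\mathcal H^1}$. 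The points $p^1_{i_1},\dots,p^1_{i_n}$ span $L^1_J\cong\PP^{n-1}$, and the rational normal curves of degree $n-1$ through them sweep out $L^1_J$; the class $\gamma$ of the strict transform of such a curve satisfies $\overline{F_1}|_{\mathcal H^1}\cdot\gamma=(n-1)-n=-1$ and $G\cdot\gamma=(n-1)d_1-\sum_{k=1}^n\max\{0,m_{i_k}-d_2\}$. As these curves cover $\overline{F_1}|_{\mathcal H^1}$, the usual argument (a general such $\gamma$ meets any effective representative non-negatively away from the hyperplane) gives multiplicity at least $\max\{0,\sum_k\max\{0,m_{i_k}-d_2\}-(n-1)d_1\}$, which is $\ge\max\{0,\sum_k m_{i_k}-(n-1)d_1-nd_2\}=\kappa_{F_1}(D)$ because $\max\{0,m_{i_k}-d_2\}\ge m_{i_k}-d_2$. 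This proves (a).

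For (b) it remains to prove the reverse inequality. Subtracting $F_1$ from a class of the form~\eqref{general form divisor} yields another such class and drops the integer $K_{F_1}(D):=\sum_{k=1}^n m_{i_k}-(n-1)d_1-nd_2$ by exactly $1$. Hence, as in Proposition~\ref{BLI for bilinear spans}(b), the exact value will follow once we establish the \emph{Claim}: if $K_{F_1}(D)\le0$ then $F_1$ is not a component of the base locus of $D$. Indeed $D-\kappa_{F_1}(D)F_1$ is effective by (a) and has $K_{F_1}\le0$, so the Claim forces the multiplicity to be exactly $\kappa_{F_1}(D)$. Using $|D+E_i|=|D|+E_i$ and setting the multiplicities at unused points to $0$, we reduce the Claim to $s=n+2$. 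To prove it I would exhibit an effective representative of $D$ in which $F_1$ does not occur, by filling a $(d_1+d_2)\times(n+1)$ tableau whose first $d_1$ rows are hyperplanes $H_1-\sum E_\bullet$ pulled back from the first factor (at most $n$ entries) and whose last $d_2$ rows are hyperplanes $H_2-\sum E_\bullet$ from the second (at most $n+1$ entries), placing each label $E_i$ exactly $m_i$ times, with no repetition within a row and no first-factor row equal to $\{E_{i_1},\dots,E_{i_n}\}$. The effectivity inequalities $m_i\le d_1+d_2$ and $\sum_i m_i\le nd_1+(n+1)d_2$ of Lemma~\ref{lemma-effectivityinequality} make the tableau fillable at all, while the hypothesis $K_{F_1}(D)\le0$, i.e. $\sum_{k=1}^n m_{i_k}\le(n-1)d_1+nd_2$, is precisely the statement that the total demand of the labels $E_{i_1},\dots,E_{i_n}$ does not exceed the capacity available when every first-factor row omits at least one of them.

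The main obstacle is this last combinatorial step: proving that the three inequalities above are not merely necessary but \emph{sufficient} to fill the tableau while avoiding the forbidden first-factor row. The delicate case is when some $m_{i_k}<d_2$, so that the naive assignment placing all of $J$ in every second-factor row overshoots the multiplicity of $E_{i_k}$; one must then redistribute so that the column totals still balance. I would handle this by an explicit greedy column-by-column filling exactly as in Example~\ref{example-procedure-CT}, which is the technical heart of the argument.
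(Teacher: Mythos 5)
Your part (a) is correct and is essentially the paper's argument with one substitution: like the paper, you restrict to $\mathcal H^1\cong X^{n,0}_s$ via Lemma \ref{bli for pullbacks} and compute the restricted classes from \eqref{strict-transform-on-overlineX}, but where the paper then cites a known linear base-locus lemma on the blown-up $\PP^n$, you reprove it with a sweeping family of degree-$(n-1)$ rational normal curves through the $n$ points; the inequality $\max\{0,m_{i_k}-d_2\}\ge m_{i_k}-d_2$ also lets you bypass the paper's separate treatment of the case $m_{i_j}\le d_2$. Likewise, your reduction of the exactness statement in (b) to the Claim (via $K_{F_1}(D-F_1)=K_{F_1}(D)-1$ and part (a)) is set up correctly.

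Part (b), however, has a genuine gap exactly at the step you label ``the technical heart'': the Claim is never proved, only deferred to ``an explicit greedy column-by-column filling exactly as in Example \ref{example-procedure-CT}'', and under the hypotheses you allow that deferred step is false as stated. Your tableau is to be filled assuming only $K_{F_1}(D)\le0$ together with the inequalities of Lemma \ref{lemma-effectivityinequality}, but the paper's filling works only after two further moves that are absent from your sketch: (i) a reduction to the case $K_{F_1}(D)=0$ exactly, and (ii) the inequality $m_{n+1}+m_{n+2}\le d_1+d_2$, obtained by pairing $D$ with the moving curve class $nl_1+(n+1)l_2-\sum_{i=1}^{n+2}e_i$ of Corollary \ref{corollary-ratcurvefewerpoints}(b) and then using $K_{F_1}(D)=0$. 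These two facts force the labels $E_{i_1},\dots,E_{i_n}$ to fill the first $n$ columns exactly (so that every $H_1$-row, having its $n$-th cell blank, automatically misses one of them --- avoidance of $F_1$ is built into the tableau, not a constraint to monitor) and let the two remaining labels fit inside the last column. When $K_{F_1}(D)<0$ one only gets $m_{n+1}+m_{n+2}\le d_1+d_2-K_{F_1}(D)$, the non-$J$ labels can overflow the last column, and the greedy filling genuinely breaks: for $n=2$, $s=4$, the class $D=3H_1+H_2-E_1-E_2-4E_3-3E_4$ is effective (e.g.\ $(H_2-E_1-E_2-E_3)+3(H_1-E_3-E_4)$), satisfies all your inequalities with $K_{F_1}(D)=-3$, yet the column-by-column filling in the order $E_1,E_2,E_3,\dots$ places $E_3$ twice in the $H_2$-row; a valid filling exists but must be found by a global rearrangement, not greedily. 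So your proposal must either carry out (and justify) the paper's reduction to $K_{F_1}(D)=0$, or prove a strictly stronger combinatorial statement valid for all $K_{F_1}(D)\le0$; as written it does neither, and the difficulty you flag (``some $m_{i_k}<d_2$'') is not the actual obstruction.
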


\begin{proof}[Proof of Proposition \ref{BLI pulledback hyperplanes}]
Given an effective divisor $D$, let us introduce the following convenient notation:$$K_{F_1}(D):=\sum_{j=1}^n m_{i_j}-(n-1)d_1-nd_2.$$
We first of all show that $F_1$ is contained in the base locus of $D$, for any number of points $s$,  at least $\kappa_{F_1}(D)=\max\{0,K_{F_1}(D)\}$ times. Assume that $m_{i_j}\le d_2$ for some $j\in\{1,\dots,n\}$ and, for the sake of simplicity,  let us set $j=n$, after reordering indices if necessary. We obtain 
$\sum_{j=1}^n m_{i_j}-(n-1)d_1-nd_2
\leq\sum_{j=1}^{n-1} m_{i_j}-(n-1)(d_1+d_2)\le0$, where the last inequality follows from  the effectivity condition $m_{i_j}\le d_1+d_2$ that must hold for every index $i_j$. Hence $\max\{0,K_{F_1}(D)\}=0$, therefore the statement holds trivially.

We will now assume that   $m_{i_j}>d_2$ for every $j\in\{1,\dots,n\}$.
Let $h,e_1,\dots,e_s$ be a basis for the Picard group of $\mathcal{H}_1$ obtained as follows:
\begin{equation}\label{Picard-basis-of-fibre}
h=H_1|_{\mathcal{H}_1}, \quad e_i=E_{\Pi^1_{\{i\}}}|_{\mathcal{H}_1}
\end{equation}
Using \eqref{strict-transform-on-overlineX} and \eqref{Picard-basis-of-fibre}, we obtain:
\begin{align*}
{\overline{F}^1_1}|_{\mathcal{H}_1}&=h-\sum_{j=1}^ne_{i_j}
\\
{\overline{D}^1}|_{\mathcal{H}_1}&=d_1h-\sum_{i=1}^s\max\{0,m_i-d_2\}e_{i}.
\end{align*}
That $\max\{0,k_{F_1}(D)\}$ is a lower bound for the multiplicity of containment of $F_1$ in the base locus of $D$ now follows from Lemma \ref{bli for pullbacks} and \cite[Lemma 2.1]{BDP-TAMS}. 

We now prove the second part of the statement, i.e. that for $s\le n+2$, then the integer $\max\{0,K_{F_1}(D)\}$  is the exact multiplicity of containment of $F_1$ in the base locus of $D$.

We claim that if $K_{F_1}(D)\le0$, then $F_1$ is not contained in the base locus of $D$. Accepting the claim and observing that the integer $K_{F_1}(D)$ drops by one after subtracting $F_1$, i.e.$K_{F_1}(D-F_1)=K_{F_1}(D)-1$, we can conclude that the multiplicity of containment of $F_1$ in $D$ is exactly the number $\max\{0,K_{F_1}(D)\}$.

We now prove the claim. If $s<n+1$ there is nothing to prove. If $s=n+1$, we may consider $D$ as a divisor on $X^{n,n+1}_{n+2}$ by adding a point with multiplicity zero. Therefore is is enough to prove the statement for $s=n+2$.

We prove the claim for $s=n+2$. Modulo reordering the points, we may assume that $\{i_1,\dots,i_n\}=\{1,\dots,n\}$. Since $|D+E_i|=|D|+E_i$, where $E_i$ is an exceptional divisor, it is enough to prove the statement for divisors $D$ with $K_{F_1}(D)=0$. 
By Corollary \ref{corollary-ratcurvefewerpoints}(b), the curve class of degree $(n,n+1)$ passing through all $n+2$ points is moving. Therefore, since $D$ is effective, its intersection with such a curve class must be non-positive. This gives the following inequality.
$$
\sum_{i=1}^{n+2}m_i- nd_1-(n+1)d_2\le0.
$$
This, together with the assumption $K_{F_1}(D)=0$, implies that $m_{n+1}+m_{n+2}\le d_1+d_2$.

Following the same idea as in the second part of the proof of Proposition \ref{BLI for bilinear spans}, we consider a table with $d_2+d_1$ rows and $n+1$ columns. Leaving the $n$-th entry of each of the bottom $d_1$ rows blank, we may fill  the remaining $(n-1)d_1+nd_2$ entries each with an exceptional divisor starting from the first column and filling it top to bottom, then filling the second column, and so on, as follows: start by writing  $m_1$ times $E_1$, then $m_2$ times $E_2$, etc., up to $m_{n}$ times $E_n$. Since $K_{F_1}(D)=0$, at this point we will have filled all cells of the first $n$ columns. We then fill the $n+1$-st column with $m_{n+1}$ times $E_{n+1}$ and $m_{n+2}$ times $E_{n+2}$. The condition $m_1\le d_1+d_2$ guarantees that  on each row there are no repetitions. 

This yields a decomposition of $D$ as a sum of $d_1$ divisor classes of type $H_1 -\sum_{k=1}^nE_{i_k}$ and $d_2$ divisor classes of type $H_2-\sum_{k=1}^{n+1}E_{i_k}$s. By construction, the sum  of these $d_1+d_2$ divisors is linearly equivalent to $D$. Moreover none of the summand equals $F_1$. Hence $F_1$ is not  a fixed component of $|D|$.
\end{proof}

\begin{remark}
Applying Lemma \ref{bli for pullbacks} to the pullback class of degree $(0,1)$ given by $F_2=H_2-\sum_{k=1}^{n+1}E_{i_k}$  yields the formula 
$\max\{0,\sum_{j=1}^{n+1} m_{i_j}-(n+1)d_1-nd_2\}.$
This formula however is weaker  than that obtained as an application of Proposition \ref{BLI for bilinear spans}, which is $\max\{0,\sum_{j=1}^{n+1} m_{i_j}-n(d_1+d_2)\},$
so we consider it redundant.
\end{remark}

\subsubsection{Pull-back secant varieties and joins from \texorpdfstring{$X^{n,0}_{n+3}$}{X{n,0}{n+3}}}
Assume that $s=n+3$. Then for every integer $t\ge0$ and every subset $I\subset\{1,\dots, n+3\}$ of cardinality $|I|=n-2t$, we can consider the pulled-back divisor
\begin{equation}\label{pulledback-cones}
F_{t,I}:=(t+1)H_1-(t+1)\sum_{i\in I}^{n+1} E_i-t\sum_{i\notin I}E_i.
\end{equation}
Its image via $\pi_1$ is the cone over the $t$-secant variety to the rational normal curve of degree $n$ of $\PP^n$ passing through $p^1_1,\dots,p^1_{n+3}$, with vertex the linear span of the points $p^1_i$ with $i\in I$. We refer to \cite{BDPn+3} for more details about these divisors.

If we consider $X^{n,0}_{n+3}$, the blow-up $\PP^n$ at $p^1_1,\dots,p^1_{n+3}$, the strict transforms of such cones, along with the exceptional divisors, generate the effective cone \cite{BDPn+3,CT06}.
Therefore, the divisors of the form \eqref{pulledback-cones} and $E_i$, $i\in\{1,\dots,n+1\}$ are all fixed extremal rays of the effective cone of $X^{n,n+1}_{n+3}$ and they span the facet $\Eff(X^{n,n+1}_{n+3})\cap \{d_2=0\}$ of such cone. For all these divisors we are able to give a base locus lemma.
\begin{proposition}\label{prop-bli-pulledbackcone}
Let $D$ be an effective divisor on $X^{n,n+1}_{n+3}$. Let $F_{t,I}$ be a pulled-back divisor of the form \eqref{pulledback-cones}, for $t\ge0$, $I\subset\{1,\dots,n+3\}$ with $|I|=n-2t\ge0$.
Then $F_{t,I}$ is contained in the base locus of $D$ at least $$
\kappa_{F_{t,I}}(D):=\max\{0,(t+1)\sum_{i\in I}m_i+t\sum_{i\notin I}m_i-((n+1)t+|I|-1)d_1-((n+3)t+|I|)d_2\}
$$
times.
\end{proposition}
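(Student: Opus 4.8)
The plan is to reduce the statement, via Lemma \ref{bli for pullbacks}, to a known base locus inequality on $X^{n,0}_{n+3}$. Since $F_{t,I}$ is by construction a pull-back divisor from the first factor --- namely $F_{t,I}=(t+1)H_1-\sum_i m_i^F E_i$ with $m_i^F=t+1$ for $i\in I$ and $m_i^F=t$ for $i\notin I$ --- I would apply Lemma \ref{bli for pullbacks} with $j=1$. This bounds the multiplicity of containment of $F_{t,I}$ in the base locus of $D$ from below by the multiplicity of containment of $\overline{F_{t,I}}^1|_{\mathcal{H}^1}$ in the base locus of $\overline{D}^1|_{\mathcal{H}^1}$, where $\mathcal{H}^1\cong X^{n,0}_{n+3}$ is the strict transform of a general fibre of $\pi_2$, i.e. a copy of $\PP^n$ blown up at the $n+3$ general points $p^1_1,\dots,p^1_{n+3}$.

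Next I would compute these two restrictions in the basis $h=H_1|_{\mathcal{H}^1}$, $e_i=E_{\Pi^1_{\{i\}}}|_{\mathcal{H}^1}$. Using \eqref{strict-transform-on-overlineX}, together with the facts that $H_2$ restricts to zero on a fibre of $\pi_2$ and that each $E_i$ is disjoint from a general such fibre (so also restricts to zero), one finds
\[
\overline{D}^1|_{\mathcal{H}^1}=d_1 h-\sum_{i=1}^{n+3}\max\{0,m_i-d_2\}e_i, \qquad
\overline{F_{t,I}}^1|_{\mathcal{H}^1}=(t+1)h-(t+1)\sum_{i\in I}e_i-t\sum_{i\notin I}e_i.
\]
The second class is precisely the strict transform on $X^{n,0}_{n+3}$ of the cone over the $t$-secant variety to the rational normal curve of degree $n$ through $p^1_1,\dots,p^1_{n+3}$, with vertex $L^1_I$.

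I would then invoke the base locus inequality for these secant cones on $X^{n,0}_{n+3}$ established in \cite{BDPn+3}: for an effective divisor $\delta h-\sum\mu_i e_i$, the cone is contained in its base locus at least $\max\{0,(t+1)\sum_{i\in I}\mu_i+t\sum_{i\notin I}\mu_i-((n+1)t+|I|-1)\delta\}$ times. Substituting $\delta=d_1$ and $\mu_i=\max\{0,m_i-d_2\}$ yields a lower bound for the quantity we want. The proof then concludes with arithmetic: since $\max\{0,m_i-d_2\}\ge m_i-d_2$ and the coefficient of $\delta$ is unaffected, replacing each $\mu_i$ by $m_i-d_2$ only decreases the bound, and a direct computation of the $d_2$-coefficient, using $|I|=n-2t$ and $|I^c|=2t+3$, gives $(t+1)|I|+t|I^c|=(n+1)t+n=(n+3)t+|I|$. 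This recovers exactly $\kappa_{F_{t,I}}(D)$.

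The main obstacle I anticipate is the bookkeeping around the truncations $\max\{0,m_i-d_2\}$: a priori one might worry that replacing $\mu_i$ by the possibly negative quantity $m_i-d_2$ breaks the inequality, but because the statement only asserts a lower bound and $\max$ is monotone, the inequality $\max\{0,m_i-d_2\}\ge m_i-d_2$ in fact works in our favour. The other point demanding care is verifying that $\mathcal{H}^1$ genuinely is isomorphic to $X^{n,0}_{n+3}$ with the $p^1_i$ in general position, so that the cone divisors and the results of \cite{BDPn+3} really apply on the fibre; this follows from the generality of both the points and the chosen fibre.
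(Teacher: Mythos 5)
Your proof is correct and follows essentially the same route as the paper: reduce via Lemma \ref{bli for pullbacks} to the general fibre $\mathcal{H}^1\cong X^{n,0}_{n+3}$, compute the restricted classes using \eqref{strict-transform-on-overlineX}, and apply the secant-cone base locus inequality of \cite[Lemma 4.1]{BDPn+3}, with the final arithmetic identity $(t+1)|I|+t(n+3-|I|)=(n+3)t+|I|$ recovering $\kappa_{F_{t,I}}(D)$. The only deviation is in handling the truncations: the paper first disposes of the cases where some $m_j\le d_2$ by separate effectivity estimates (Lemma \ref{lemma-effectivityinequality}), so that it may assume $m_i>d_2$ for all $i$ and the truncations $\max\{0,m_i-d_2\}$ disappear, whereas you absorb them directly via the monotonicity $\max\{0,m_i-d_2\}\ge m_i-d_2$ --- a valid and slightly cleaner treatment of the same step.
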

\begin{proof}
Since the case $t=0$ is covered in Proposition \ref{BLI pulledback hyperplanes}, we will assume that $t\ge1$.
Assume that $m_j\le d_2$ for $j\in I$. We obtain
\begin{align*}
& (t+1)\sum_{i\in I}m_i+t\sum_{i\notin I}m_i\\
=& \, t\sum_{i\ne j}m_i+\sum_{i\in I\setminus\{j\}}m_i+(t+1)m_j\\
\le& \, t(nd_1+(n+1)d_2)+(|I|-1)(d_1+d_2)+(t+1)d_2\\
=& \,(nt+|I|-1)d_1+((n+2)t+|I|)d_2\\
\le& \, ((n+1)t+|I|-1)d_1+((n+3)t+|I|)d_2
\end{align*}
where the first inequality follows from the effectivity conditions $m_i\le d_1+d_2$ and
$\sum_{k=1}^{n+2}m_i\le nd_1+(n+1)d_2$ proved in Lemma \ref{lemma-effectivityinequality} (a) and (b) and from the assumption $m_j\le d_2$, while the other equalities and the second inequality follow from simple calculation.
In this case the statement holds trivially.

Assume that $m_j\le d_2$ for $i\notin I$. We obtain
\begin{align*}
& (t+1)\sum_{i\in I}m_i+t\sum_{i\notin I}m_i\\
=& \, t\sum_{i\ne j}m_i+\sum_{i\in I\setminus\{j\}}m_i+tm_j\\
\le& \, t(nd_1+(n+1)d_2)+(|I|-1)(d_1+d_2)+td_2\\
=& \, (nt+|I|-1)d_1+((n+2)t+|I|-1)d_2\\
\le& \, ((n+1)t+|I|-1)d_1-((n+3)t+|I|)d_2
\end{align*}
and conclude similarly.

We now assume that $m_i> d_2 $ for every $i\in\{1,\dots,n+3\}$.
Let $\mathcal{H}_1$ be as above. Using \eqref{strict-transform-on-overlineX} and \eqref{Picard-basis-of-fibre}, we obtain:
$$
{\overline{F_{t,I}}^1}|_{\mathcal{H}_1}=(t+1)h-(t+1)\sum_{i\in I}e_{i}-t\sum_{i\notin I}e_i.
$$
The statement now follows from Lemma \ref{bli for pullbacks} and \cite[Lemma 4.1]{BDPn+3}.
\end{proof}

\begin{remark}
If a divisor of the form \eqref{pulledback-cones} has an interpretation as a bilinear join, then a stronger base locus inequality may be found as an application of Lemma \ref{recipe-BLI-bilinear joins}; see for instance 
\ref{prop-bli-x346}, part (c), below, concerning the divisor in $X^{3,4}_6$ of the form \eqref{pulledback-cones} with $t=1$.
\end{remark}


\section{Birational geometry of \texorpdfstring{$X^{n,n+1}_{n+2}$}{X{n,n+1}{n+2}}}\label{section-Xn,n,n+2}
In this section, we consider the case of $X^{n,n+1}_{n+2}$ --- that is, the blowup of $\PP^n \times \PP^{n+1}$ in a set of $n+2$ general points.  We start by computing its effective cone and its movable cone. We then show that it is log Fano, in particular a Mori dream space.

\subsection{The effective and the movable cones}
We will consider divisors  on $X=X^{n,n+1}_{n+2}$ of the following form
\begin{equation}\label{general-divisor-n+2}
D=d_1H_1+d_2H_2-\sum_{i=1}^{n+2}m_iE_i.
\end{equation}

\begin{theorem}\label{eff-cone-n+2}
The monoid of effective divisor classes on $X^{n,n+1}_{n+2}$ is generated by the following classes:
\begin{itemize}
\item[($G_0$)] $E_i$,  for every $i\in\{1,\dots, n+2\}$,
\item[($G_1$)] $H_1-\sum_{i\in I}E_i$, for every $I\subset \{1,\dots, n+2\}$ with $|I|=n$,
\item[($G_2$)] $H_2-\sum_{i\in I}E_i$, for every $I\subset \{1,\dots, n+2\}$ with $|I|=n+1$.
\end{itemize}
In particular, the effective cone $\Eff(X^{n,n+1}_{n+2})$ has the following defining inequalities:
\begin{itemize}
\item[($\mathcal{I}_0$)] $d_j\ge0$, for every $j\in\{1,2\}$, 
\item[($\mathcal{I}_1$)]  $d_1+d_2\ge m_i$, for every $i\in\{1,\dots, n+2\}$,
\item[($\mathcal{I}_2$)]  $nd_1+(n+1)d_2\ge \sum_{i\in I}m_i$, for every $I\subseteq \{1,\dots, n+2\}$ with $|I|\in\{n+1,n+2\}$.
\end{itemize}
\end{theorem}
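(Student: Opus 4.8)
The plan is to use the \emph{cone method} of \cite{GPP21}. Write $\mathcal{C}$ for the cone generated by the classes $(G_0),(G_1),(G_2)$, and $\mathcal{P}$ for the cone cut out by the inequalities $(\mathcal{I}_0),(\mathcal{I}_1),(\mathcal{I}_2)$, both inside $N^1(X)_\RR$ with coordinates $(d_1,d_2,m_1,\dots,m_{n+2})$. I want to establish the chain of inclusions
\[
\mathcal{C} \subseteq \Eff(X) \subseteq \mathcal{P} \subseteq \mathcal{C},
\]
which forces all three cones to coincide and yields both assertions of the theorem at once. The leftmost inclusion is immediate: $E_i$ is an exceptional divisor, while $H_1-\sum_{i\in I}E_i$ with $|I|=n$ and $H_2-\sum_{i\in I}E_i$ with $|I|=n+1$ are the strict transforms of the unique hyperplanes through $n$ general points of $\PP^n$, respectively $n+1$ general points of $\PP^{n+1}$, and so are effective. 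A direct substitution shows each generator also satisfies $(\mathcal{I}_0)$--$(\mathcal{I}_2)$, giving $\mathcal{C}\subseteq\mathcal{P}$; the substance of the theorem is therefore the two inclusions involving $\Eff(X)$.

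For $\Eff(X)\subseteq\mathcal{P}$ I would pair an effective class $D$ with moving curve classes. The classes $l_1,l_2$ cover $X$, so $d_1=D\cdot l_1\ge0$ and $d_2=D\cdot l_2\ge0$, which is $(\mathcal{I}_0)$. Inequality $(\mathcal{I}_1)$ is exactly Lemma \ref{lemma-effectivityinequality}(a), obtained by pairing with the moving class $l_1+l_2-e_i$ of a $(1,1)$-curve through $p_i$. Inequality $(\mathcal{I}_2)$, for $|I|\in\{n+1,n+2\}$, is Lemma \ref{lemma-effectivityinequality}(b), obtained by pairing with the class $nl_1+(n+1)l_2-\sum_{i\in I}e_i$ of a distinguished rational curve through the points indexed by $I$; these move by Corollary \ref{corollary-ratcurvefewerpoints}(b) since $|I|\le n+2$.

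The heart of the argument is the reverse inclusion $\mathcal{P}\subseteq\mathcal{C}$, which I would prove by an explicit \emph{integral} decomposition, so that it yields the stronger statement about the monoid of effective classes. Given an integral $D\in\mathcal{P}$, I first peel off exceptional divisors: for each $i$ with $m_i<0$ I subtract $(-m_i)E_i\in\mathcal{C}$, reducing to $m_i\ge0$ for all $i$ (one checks, using $(\mathcal{I}_1)$ and $(\mathcal{I}_2)$, that the resulting class still lies in $\mathcal{P}$). It then suffices to write such a $D$ as a nonnegative integral combination of classes of types $(G_1)$ and $(G_2)$. Following the tableau procedure of \cite[Lemma 4.24]{CT06} used in Propositions \ref{BLI for bilinear spans} and \ref{BLI pulledback hyperplanes}, I set up a table with $d_1+d_2$ rows, the first $d_1$ of capacity $n$ and the last $d_2$ of capacity $n+1$, and try to place label $i$ in exactly $m_i$ distinct rows with each row receiving at most its capacity. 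Each completed row reads off a generator (a short row reads off a generator plus exceptional divisors), and summing the rows recovers $D$.

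The main obstacle is the feasibility of this placement, which I would phrase as a transportation problem and settle by the Hall/Gale feasibility criterion: a valid placement exists provided that for every $k$ and every set $S$ of $k$ labels one has $\sum_{i\in S}m_i\le\sum_{\rho}\min(k,c_\rho)$, where $c_\rho$ is the capacity of row $\rho$. For $k\le n$ the right-hand side equals $k(d_1+d_2)$ and the bound is $(\mathcal{I}_1)$ summed over $S$; for $k=n+1$ and $k=n+2$ it equals $nd_1+(n+1)d_2$ and the bound is precisely $(\mathcal{I}_2)$. Thus the defining inequalities of $\mathcal{P}$ are exactly the feasibility conditions for the decomposition, which is the structural reason this particular list of generators and inequalities is the correct one.
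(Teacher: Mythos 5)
Your proposal is correct and follows essentially the same route as the paper: the same chain of inclusions, the same moving-curve classes (Lemma \ref{lemma-effectivityinequality} and Corollary \ref{corollary-ratcurvefewerpoints}(b)) to force the inequalities on effective classes, and the same \cite[Lemma 4.24]{CT06}-style tableau to decompose any integral class satisfying $(\mathcal{I}_0)$--$(\mathcal{I}_2)$ into the generators $(G_0)$, $(G_1)$, $(G_2)$. The only deviation is at the feasibility step of the tableau: the paper fills the table by an explicit greedy column-by-column procedure, with $(\mathcal{I}_2)$ guaranteeing that all labels fit and $(\mathcal{I}_1)$ guaranteeing no repetition within a row, whereas you invoke the Hall/Gale transportation criterion --- an equally valid justification that has the merit of making transparent why precisely these inequalities are the defining ones.
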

\begin{proof}
Given a divisor $D$ of the form \eqref{general-divisor-n+2}, we will prove the following claims:
\begin{itemize}
\item If $D$ is effective then it satisfies the inequalities $(\mathcal I_0)$, $(\mathcal I_1)$ and $(\mathcal I_2)$.
\item If $D$ satisfies $(\mathcal{I}_0)$, $(\mathcal{I}_1)$ and $(\mathcal{I}_2)$, then $D$ is a positive integer linear combination of divisors of type $(G_0)$, $(G_1)$ and $(G_2)$.
\item If $D$ is a positive linear combination of divisors of type $(G_0)$, $(G_1)$ and $(G_2)$, then $D$ is effective.
\end{itemize}

The third claim is obvious, since all divisors of type $(G_i)$, with $i=0,1,2$, are effective.

The first claim follows from the observation that the following curve classes in $N_1(X)_\mathbb{R}$,  that correspond respectively  to the inequalities $(\mathcal{I}_0)$, $(\mathcal{I}_1)$ and $(\mathcal{I}_2)$, are moving:
\begin{itemize}
\item[($\mathcal{I}_0$)] $h_j$,
\item[($\mathcal{I}_1$)]  $h_1+h_2-e_i$,
\item[($\mathcal{I}_2$)]  $nh_1+(n+1)h_2 -\sum_{i\in I}e_i$.
\end{itemize}
In fact, given any point that lies off the exceptional divisors and the strict transforms of the bilinear joins of the points $p_i$, there is a curve of each of the listed classes passing through the point. For the first two types of classes this is clear; for the third, the existence follows from Corollary \ref{corollary-ratcurvefewerpoints} (b).

We now prove the second claim, adapting to this case the simple idea used in \cite[Lemma 4.24]{CT06}. Assume first that $m_i>0$ for every $i\in\{1,\dots,n+2\}$. Consider a table with $d_2+d_1$ rows and $n+1$ columns. Leaving the last entry of each of the bottom $d_1$ rows blank, we may fill  the remaining $(n+1)d_2+nd_1$ entries each with an exceptional divisor starting from the first column and filling it top to bottom, then filling the second column etc. as follows: start by writing $m_1$ times $E_1$, then $m_2$ times $E_2$, etc. until we will have placed $E_{n+2}$ $m_{n+2}$ times. Condition $(\mathcal{I}_2)$ guarantees that this operation is possible, moreover  if the inequality holds strictly we shall leave blank the remaining entries of the table. 
Condition $(\mathcal{I}_1)$ guarantees that on each row there are no repetitions. 
This yields a decomposition of $D$ as a sum of divisor classes of type  $(G_1)$ and $(G_2)$ as follows:  the $d_2$  top  rows give rise to $d_2$ divisors of the form $H_2-\sum_{i\in I_2}E_i$,  with $|I_2|\le n+1$, and, similarly,  the $d_1$ bottom rows give rise to $d_1$ divisors of the form $H_1-\sum_{i\in I_1}E_i$, with $|I_1|\le n$. By construction, the sum  of these $d_2+d_1$ divisors is linearly equivalent to $D$. 

Assume now that $m_i\le0$ for some $i$.
Let  $I^-\subset \{1,\dots,n+2\}$ be the subset such that $m_i\le0$ for  every $i\in I^-$. We can write $D=D'-\sum_{i \in I^-}m_iE_i$ and, arguing as above, show that $D'$ decomposes as a sum of divisors of the form $(G_1)$ and $(G_2)$, so that $D$ is a sum of divisors of type $(G_0)$, $(G_1)$ and $(G_2)$.
This completes the proof of the second claim and of the theorem.
\end{proof}
   
\begin{theorem}\label{mov-cone-n+2}
The movable cone $\Mov(X^{n,n+1}_{n+2})$ is cut out by the inequalities $(\mathcal{I}_0)$, $(\mathcal{I}_1)$ and $(\mathcal{I}_2)$ of the effective cone and by the following inequalities:
\begin{itemize}
\item[($\mathcal{II}_0$)] $m_i\ge0$, for every $i\in\{1,\dots,n+2\}$, 
\item[($\mathcal{II}_1$)]  $(n-1)d_1+nd_2 \ge \sum_{i\in I}m_i$, for every $I\subset\{1,\dots, n+2\}$ with $|I|=n$,
\item[($\mathcal{II}_2$)]  $n(d_1+d_2)\ge \sum_{i\in I}m_i$, for every $I\subset\{1,\dots, n+2\}$ with $|I|=n+1$.
\end{itemize}
\end{theorem}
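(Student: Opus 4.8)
The plan is to identify the movable cone as the set of divisor classes all of whose multiples have no fixed divisorial components, and then use the base locus inequalities from Section \ref{section-bli} together with the description of the effective cone in Theorem \ref{eff-cone-n+2} to pin down exactly which walls are active. Concretely, a class $D$ lies in $\Mov(X)$ if and only if $D$ is effective and $D$ does not contain any of the generators of $\Eff(X)$ listed in Theorem \ref{eff-cone-n+2} as a fixed component; so I would run through the three families $(G_0)$, $(G_1)$, $(G_2)$ and translate ``not a fixed component'' into a numerical inequality using the \emph{exact} base locus multiplicities already computed.

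First I would handle the exceptional divisors $(G_0)$: by Lemma \ref{bli-exceptional} the class $E_i$ is contained in the base locus of $D$ exactly $\max\{0,-m_i\}$ times, so demanding that no $E_i$ be a fixed component is precisely the condition $(\mathcal{II}_0)$, namely $m_i \geq 0$ for all $i$. Next, for the divisors $(G_1)$ of the form $F_1 = H_1 - \sum_{i\in I} E_i$ with $|I|=n$, Proposition \ref{BLI pulledback hyperplanes}(b) (valid here since $s=n+2\le n+2$) gives that the exact multiplicity of containment is $\max\{0,\sum_{j\in I} m_{i_j} - (n-1)d_1 - nd_2\}$; requiring this to vanish for every such $I$ is exactly $(\mathcal{II}_1)$. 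Finally, for the divisors $(G_2)$ of the form $F_2 = H_2 - \sum_{i\in I} E_i$ with $|I|=n+1$, Proposition \ref{BLI for bilinear spans}(b) gives the exact multiplicity $\max\{0,\sum_{j\in I} m_{i_j} - n(d_1+d_2)\}$, and asking this to vanish gives $(\mathcal{II}_2)$. Since $\Eff(X)$ is already cut out by $(\mathcal{I}_0)$, $(\mathcal{I}_1)$, $(\mathcal{I}_2)$, intersecting with these three new families of half-spaces produces the claimed description.

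The essential input I would rely on is the standard characterisation that for a Mori dream space (which $X$ is, being log Fano by Theorem \ref{theoremXn,n+1,n+2-logFano}) the movable cone is the closure of the locus of effective classes with empty fixed divisorial part, and that this locus is cut out by the effectivity inequalities together with the vanishing of each \emph{exact} divisorial base locus multiplicity. The reason I can get clean equalities rather than mere containments is that Propositions \ref{BLI for bilinear spans}(b) and \ref{BLI pulledback hyperplanes}(b) furnish the exact multiplicities precisely in the range $s\le n+2$, so every candidate fixed component among the generators of $\Eff(X)$ is controlled on the nose.

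The main obstacle will be verifying that the generators $(G_0)$, $(G_1)$, $(G_2)$ of $\Eff(X)$ are the \emph{only} possible fixed divisorial components of an effective $D$, so that no further inequalities are needed. To address this I would argue that any irreducible fixed component must itself be an extremal effective divisor, hence one of the listed generators; concretely, if $D$ is effective and satisfies $(\mathcal{II}_0)$--$(\mathcal{II}_2)$, I would invoke the explicit decomposition of $D$ as a positive combination of classes of type $(G_1)$ and $(G_2)$ constructed in the proof of Theorem \ref{eff-cone-n+2} (the ``table'' or Castravet--Tevelev argument). Under the movability hypotheses $(\mathcal{II}_1)$ and $(\mathcal{II}_2)$ this decomposition can be chosen so that the supports of the summands vary, exhibiting $D$ as a sum of classes each of whose linear systems moves, which shows $D$ has no fixed component and completes the proof that the listed inequalities are sufficient. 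The necessity direction is the routine one handled by the exact base locus lemmas above.
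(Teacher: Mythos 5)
Your proposal is correct and takes essentially the same route as the paper: movability is reduced, via Theorem \ref{eff-cone-n+2}, to requiring that none of the generators $(G_0)$, $(G_1)$, $(G_2)$ appear in the divisorial base locus, and each of these conditions is then translated into $(\mathcal{II}_0)$--$(\mathcal{II}_2)$ using exactly the same inputs, namely Lemma \ref{bli-exceptional}, Proposition \ref{BLI pulledback hyperplanes}(b) and Proposition \ref{BLI for bilinear spans}(b). The only difference is that you explicitly address the step the paper leaves implicit --- that an irreducible fixed component must be rigid and hence one of the listed monoid generators --- which is a point in your favour rather than a divergence.
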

\begin{proof}
Theorem \ref{eff-cone-n+2} implies that an effective divisor is not movable if and only if it contains (at least) one of the divisors of type $(G_0), (G_1)$ or $(G_2)$ in its divisorial fixed part. 
Equivalently, a divisor is movable if and only if it is effective and it does not have any of the above fixed divisors in its base locus. 
The effectivity condition is equivalent to the three inequalities $(\mathcal{I}_0)$, $(\mathcal{I}_1)$, $(\mathcal{I}_2)$, by the second part of Theorem \ref{eff-cone-n+2}. Moreover, an exceptional divisor $(G_0)$ in not in the base locus of $D$ if and only if $(\mathcal{II}_0)$ is satisfied. 
Finally, a fixed divisor of the form $(G_i)$ in not in the base locus of $D$ if and only if $(\mathcal{II}_i)$ is satisfied; this follows from Proposition \ref{BLI pulledback hyperplanes}(b)  for $i=1$ and from Proposition \ref{BLI for bilinear spans}(b) for $i=2$.
\end{proof}

\begin{question}
Is the movable cone $\Mov(X^{n,n+1}_{n+2})$ generated by the following classes? 
\begin{itemize}
\item $M_{1,0,|I|}=H_1-\sum_{i\in I}E_i$, for every $I\subset \{1,\dots, n+2\}$ with $0\le |I|\le n-1$,
\item $M_{0,1,|I|}=H_2-\sum_{i\in I}E_i$, for every $I\subset \{1,\dots, n+2\}$ with $0\le |I|\le n$,
\item $M_{k,0,\epsilon}=kH_1-k\sum_{i\notin I\cup\{j\}}E_i- (k-1)\sum_{i\in I}E_i-\epsilon E_j$, for every $2\le k\le n$, $I\subset \{1,\dots, n+2\}$ with $|I|=k+1$, $j\notin I$ and $\epsilon\in\{0,1\}$,
\item $M_{0,k}=kH_2-k\sum_{i\notin I}E_i- (k-1)\sum_{i\in I}E_i$, for every $2\le k\le n+1$, $I\subset \{1,\dots, n+2\}$ with $|I|=k+1$,
\item $M_{k,1,\epsilon}=k H_1+H_2-(k+1)\sum_{i\notin I\cup\{j\}}E_i- k\sum_{i\in I}E_i-\epsilon E_j$, for every $1\le k\le n$, $I\subset \{1,\dots, n+2\}$ with $|I|=k+1$, $j\notin I$ and $\epsilon\in\{0,1\}$,
\item $M_{1,k}=H_1+kH_2-(k+1)\sum_{i\notin I}E_i- k\sum_{i\in I}E_i$, for every $1\le k\le n$, $I\subset \{1,\dots, n+2\}$ with $|I|=k+2$.
\end{itemize}
One can check explicitly that each of these classes is movable by decomposing it into effective classes in multiple ways with no common fixed component. 
For $n \leq 6$ we used Normaliz \cite{normaliz} to compute the generators of the cone cut out by the inequalities $(\mathcal{I}_i)$ and $(\mathcal{II}_i)$; these computations are contained in the files {\tt X(n)(n+1)(n+2)-Mov.*}, and the outputs show that in each case the resulting cone is spanned by the list of classes above. So these classes do indeed generate the movable cone for $n \leq 6$. It would be interesting to know if the same holds for all $n$.
\end{question}
\subsection{The log Fano property}
We now give a conceptual explanation for the finiteness of the cones computed in the previous subsection: namely, the variety $\ourproductblowuptwo$ has the log Fano property. As mentioned in Section \ref{section-preliminaries}, Birkar--Cascini--Hacon---McKernan showed that log Fano varieties are Mori dream spaces, which in particular implies that their effective and movable cones must be rational polyhedral. 

\begin{theorem}\label{theoremXn,n+1,n+2-logFano}
The variety $X^{n,n+1}_{n+2}$ is log Fano, for every $n\ge1$.
\end{theorem}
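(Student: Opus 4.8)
The plan is to exhibit an effective $\QQ$-divisor $\Delta$ on $X:=\ourproductblowuptwo$ with $(X,\Delta)$ klt and $-(K_X+\Delta)$ ample. First I would record that $K_{\ourproduct}=-(n+1)H_1-(n+2)H_2$, and since $X$ is the blow-up of the smooth $(2n+1)$-fold $\ourproduct$ at $n+2$ points, $K_X=-(n+1)H_1-(n+2)H_2+2n\sum_{i=1}^{n+2}E_i$. By Proposition~\ref{proposition-nefcone} the cone of curves $\Curv(X)$ is spanned by the classes $e_i$ and $l_j-e_i$, so a class $d_1H_1+d_2H_2-\sum m_iE_i$ is ample exactly when $m_i>0$ and $d_j>m_i$ for all $i$ and $j\in\{1,2\}$. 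Writing $\Delta=d_1^\Delta H_1+d_2^\Delta H_2-\sum m_i^\Delta E_i$, ampleness of $-(K_X+\Delta)$ becomes the system $m_i^\Delta<2n$, $m_i^\Delta-d_1^\Delta>n-1$ and $m_i^\Delta-d_2^\Delta>n-2$ for every $i$. In particular every $m_i^\Delta$ is positive and, combined with the effectivity bounds of Lemma~\ref{lemma-effectivityinequality}, forced close to $2n$: the boundary $\Delta$ must meet every point with high multiplicity relative to its bidegree.

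Next I would realise such a $\Delta$ as the promised union of two irreducible divisors, one pulled back from each factor. Choose general irreducible hypersurfaces $V\subset\PP^n$ and $W\subset\PP^{n+1}$, of degrees $a$ and $b$, with an ordinary singular point of multiplicity $\mu$ at each $p_i^1$ and $\nu$ at each $p_i^2$ respectively, and smooth elsewhere (such hypersurfaces exist by a dimension count once $a,b$ are large enough, and a general member has ordinary singularities by Bertini). Let $F_1$ and $F_2$ be the strict transforms in $X$ of $V\times\PP^{n+1}$ and $\PP^n\times W$; these are irreducible, with classes $F_1=aH_1-\mu\sum E_i$ and $F_2=bH_2-\nu\sum E_i$. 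Setting $\Delta=c_1F_1+c_2F_2$ with $c_1,c_2\in\QQ_{>0}$ gives the uniform multiplicity $m_i^\Delta=c_1\mu+c_2\nu$, and the ampleness system becomes $c_1\mu+c_2\nu<2n$ together with $c_2\nu-c_1(a-\mu)>n-1$ and $c_1\mu-c_2(b-\nu)>n-2$. A short check shows that taking $\mu,\nu$ of size about $n$, degrees $a,b$ only slightly larger, and coefficients $c_1,c_2<1$ with $c_1\mu+c_2\nu$ just below $2n$ makes this region non-empty; I would fix one such rational solution and verify the finitely many curve inequalities directly.

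The substantive step is to check that $(X,\Delta)$ is klt, via an explicit log resolution. The only singularities of $\Delta$ lie along the fibres $\Pi^1_{\{i\}}=\{p_i^1\}\times\PP^{n+1}$, where $F_1$ has multiplicity $\mu$, and $\Pi^2_{\{i\}}=\PP^n\times\{p_i^2\}$, where $F_2$ has multiplicity $\nu$; away from these, the genericity of $V$ and $W$ makes $\Delta$ together with the $E_i$ simple normal crossing. I would therefore resolve by blowing up the fibres $\Pi^j_{\{i\}}$, exactly the maps $\overline{\phi}_j$ of Section~\ref{section-bli}, obtaining exceptional divisors $E_{\Pi^j_{\{i\}}}$ and, by genericity, a log smooth model. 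The centre $\Pi^1_{\{i\}}$ has codimension $n$ and $\Delta$-multiplicity $c_1\mu$, so $E_{\Pi^1_{\{i\}}}$ has discrepancy $(n-1)-c_1\mu$; similarly $\Pi^2_{\{i\}}$ has codimension $n+1$ and gives discrepancy $n-c_2\nu$. On this log smooth model the boundary coefficients are thus $c_1,c_2$ on $F_1,F_2$ and $c_1\mu-(n-1)$, $c_2\nu-n$ on the new exceptional divisors, and all of these are $<1$ once $c_1\mu<n$ and $c_2\nu<n+1$ — a splitting of $c_1\mu+c_2\nu\approx2n$ fully compatible with the ampleness region. Hence every discrepancy exceeds $-1$ and $(X,\Delta)$ is klt.

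The main obstacle is this final coordination: one must pin down degrees $a,b$, multiplicities $\mu,\nu$ and coefficients $c_1,c_2$ so that a single numerical choice simultaneously satisfies the ampleness inequalities and keeps every discrepancy above $-1$, uniformly for all $n\ge1$ (with $n=1$, i.e. $X^{1,2}_3$, checked directly if need be). The genuinely delicate points are the existence of irreducible hypersurfaces $V,W$ whose degree is close to their multiplicity at the $n+2$ general points, and the verification that blowing up the fibres $\Pi^j_{\{i\}}$ really does produce a log smooth model with no further centre of discrepancy $\le-1$; it is here that the structure recorded in Section~\ref{section-bli} (the maps $\overline{\phi}_j$ and the description of the $\Pi^j_{\{i\}}$) does the work, whereas the ampleness verification is routine given the explicit $\Curv(X)$.
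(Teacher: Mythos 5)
Your overall strategy is the same as the paper's (Theorem \ref{theoremXn,n+1,n+2-logFanoexplicit}): take $\Delta$ to be a $\QQ$-combination of divisors pulled back from the two factors, check ampleness of $-K_X-\Delta$ against the Mori cone of Proposition \ref{proposition-nefcone}, and check klt on an explicit log resolution. Your ampleness system ($m_i^\Delta<2n$, $m_i^\Delta-d_1^\Delta>n-1$, $m_i^\Delta-d_2^\Delta>n-2$) is correct. The gap is in the klt step, and it is not a technicality: the claim that for general $V,W$ the boundary is simple normal crossing away from the point-fibres $\Pi^j_{\{i\}}$, so that blowing up only those fibres yields a log resolution, is incompatible with your own ampleness constraints once $n\ge3$. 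Bertini only gives smoothness away from the \emph{base locus} of the linear system, and the system of degree-$a$ hypersurfaces in $\PP^n$ with multiplicity $\mu$ at $n+2$ general points contains the line through $p_i^1,p_j^1$ in its base locus with multiplicity at least $2\mu-a$ (restrict to the line and count zeros), and more generally $L^1_I$ with multiplicity at least $|I|\mu-(|I|-1)a$. With your suggested numerics ($\mu\approx n$, $a$ only slightly larger) every such span is a singular locus of $V$ of multiplicity $\approx n$, so $F_1$ is badly singular along all the fibres $L^1_I\times\PP^{n+1}$, $2\le|I|\le n$.

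Moreover you cannot fix this by choosing $a,b$ larger: keeping $V$ nonsingular along the lines forces $a\ge 2\mu-1$, and likewise $b\ge 2\nu-1$. Feeding these into your ampleness inequalities gives $c_2\nu>n-1+c_1(a-\mu)\ge n-1+c_1\mu-c_1$ and $c_1\mu>n-2+c_2(b-\nu)\ge n-2+c_2\nu-c_2$; adding the two yields $c_1+c_2>2n-3$, which contradicts the klt requirement $c_1,c_2<1$ for all $n\ge3$. So for $n\ge3$ the boundary \emph{must} be singular along the fibres over linear spans of subsets of the points, your proposed blowup is not a log resolution, and the discrepancies along these deeper centres are never computed. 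This is exactly the difficulty the paper's proof is organised around: it blows up the full collection $A\cup B\cup C$ of strict transforms of $L^1_I\times\PP^{n+1}$, $\PP^n\times L^2_J$ and $L^1_I\times L^2_J$ (Notation \ref{notation:fibres}), uses the base-locus results of \cite{DP19} via Propositions \ref{basepoint-freeness} and \ref{prop:stricttransform} to show this really is a log resolution, and then checks the discrepancy of every such centre, e.g.
\begin{align*}
\discrep\bigl(L^1_{\{i_1,i_2\}}\times\PP^{n+1}\bigr)=(n-2)-\tfrac{1}{4(n+1)(n+2)}(2m_1-d_1),
\end{align*}
together with a monotonicity argument reducing to $|I|\le2$. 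These are precisely the verifications your argument omits, and they are the substantive content of the paper's proof.
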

 In order to prove Theorem \ref{theoremXn,n+1,n+2-logFano}, we exhibit an effective divisor $\Delta$ such that the pair $(X,\Delta)$ is log Fano.

Consider general divisors $D_1$ and $D_2$ of class
\begin{align*}
D_1&=d_1H_1-m_1\sum_{i=1}^{n+2}E_i\\
&:= \left(3(n+1)^2(n+2)+n(n+2)^2 \right)H_1-\sum_{i=1}^{n+2} \left( 2n^2(n+2)+2n(n+1)^2-1 \right)E_i,\\
D_2&=d_2H_2-m_2\sum_{i=1}^{n+2}E_i\\
&:=\left(3(n+1)(n+2)^2+(n+1)^2(n+3)\right)H_2-\sum_{i=1}^{n+2} \left(2n(n+2)^2+2n(n+1)(n+3)-1 \right)E_i,
\end{align*} 
and set
\begin{equation}\label{divisorDelta}
\Delta=\frac{1}{4(n+1)(n+2)}(D_1+D_2).
\end{equation}

\begin{theorem}\label{theoremXn,n+1,n+2-logFanoexplicit}
Let $\Delta$ be the divisor on $X^{n,n+1}_{n+2}$ defined  
in \eqref{divisorDelta}. Then  $-K_X-\Delta$ is ample on $X^{n,n+1}_{n+2}$ and the pair $(X,\Delta)$ is klt.
\end{theorem}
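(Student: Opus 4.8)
The plan is to verify the two assertions of Theorem \ref{theoremXn,n+1,n+2-logFanoexplicit} separately: first that $-K_X - \Delta$ is ample, and second that the pair $(X,\Delta)$ is klt. For the ampleness, I would begin by computing the canonical class $K_X$ of the blowup. On $\PP^n \times \PP^{n+1}$ the anticanonical class is $(n+1)H_1 + (n+2)H_2$, and blowing up $n+2$ points in a smooth variety of dimension $2n+1$ contributes $-(2n)\sum E_i$ to $K_X$, so that $-K_X = (n+1)H_1 + (n+2)H_2 - 2n\sum_{i=1}^{n+2}E_i$. Subtracting $\Delta$ from this gives an explicit class $-K_X-\Delta = a_1 H_1 + a_2 H_2 - \sum b_i E_i$ whose coefficients I would compute from the definitions in \eqref{divisorDelta}. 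To check ampleness, I would use the description of the nef cone dual to $\Curv(X)$ from Proposition \ref{proposition-nefcone}: since $n+2 \le n+(n+1)$ for $n \ge 1$, the cone of curves is spanned by the classes $e_i$ and $l_j - e_i$, so a class $a_1H_1 + a_2H_2 - \sum b_iE_i$ is ample precisely when $b_i > 0$ for all $i$ and $a_j - b_i > 0$ for all $i$ and $j=1,2$. The somewhat baroque integer coefficients in the definition of $D_1$ and $D_2$ are clearly engineered so that these strict inequalities hold, and verifying them reduces to a finite set of polynomial inequalities in $n$.

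For the klt assertion, the strategy is to produce a log resolution of the pair $(X,\Delta)$ and check that all discrepancies exceed $-1$. The divisor $\Delta$ is a positive rational multiple of $D_1 + D_2$, where $D_1$ is a general member of a pull-back linear system from the first factor and $D_2$ is a general member of a pull-back system from the second. The key point is that, being general in their respective systems, $D_1$ and $D_2$ have singularities only along the exceptional loci forced by their prescribed multiplicities $m_1, m_2$ at the points $p_i$ and along the base loci of those pull-back systems. I would therefore analyze the singularities of $\Delta$ by resolving these base loci — blowing up the exceptional divisors $E_i$ (already present) together with the relevant fibres $\Pi^j_{\{i\}}$ and bilinear spans, using the base locus inequalities of Section \ref{section-bli} (in particular Proposition \ref{BLI pulledback hyperplanes} and Proposition \ref{BLI for bilinear spans}) to determine the exact multiplicities of containment. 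The scaling factor $\tfrac{1}{4(n+1)(n+2)}$ is chosen precisely to make the resulting coefficients of $\Delta$ along each extracted divisor small enough that the log-discrepancy inequality $a(E;X,\Delta) > -1$ holds at every exceptional divisor of the resolution.

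The main obstacle I expect is the discrepancy bookkeeping on the log resolution. One must correctly identify every divisor that can be extracted over the singular locus of $\Delta$ — the $E_i$, the pulled-back fibres $\Pi^j_{\{i\}}$, and the strict transforms of bilinear spans $\BL(p_{i_1},\dots,p_{i_k})$ — compute for each its discrepancy in the resolution of the smooth ambient space, and then subtract the coefficient with which $\Delta$ contains that divisor (read off from the base locus lemmas) scaled by $\tfrac{1}{4(n+1)(n+2)}$. Since $D_1$ and $D_2$ are pulled back from opposite factors, a simplifying observation is that their base loci are largely transverse, so the multiplicity of $\Delta$ along any extracted divisor splits as a sum of a contribution from $D_1$ and one from $D_2$, each controlled by the earlier propositions; this is what makes the verification tractable. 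The remaining work is then to confirm, for each type of exceptional divisor, that the worst-case inequality is strict, which again reduces to finitely many polynomial inequalities in $n$ that the chosen coefficients satisfy with room to spare. I would organize this verification by divisor type rather than attempting a single uniform estimate, since the discrepancies and multiplicities differ between the exceptional divisors $E_i$, the fibres, and the bilinear spans.
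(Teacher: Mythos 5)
Your ampleness argument coincides with the paper's: compute $-K_X-\Delta$ and test it against the generators $e_i$ and $l_j-e_i$ of the Mori cone given by Proposition \ref{proposition-nefcone}; that part is fine. The klt part, however, has a genuine gap: the set of centers you propose to blow up is too small to yield a log resolution. You list the $E_i$, the fibres $\Pi^j_{\{i\}}$ over the \emph{individual} projected points, and the bilinear spans $\BL(p_{i_1},\dots,p_{i_k})$, i.e.\ products $L^1_I\times L^2_I$ with the \emph{same} index set on both factors. But a general member $D_1$ of the pull-back system from $\PP^n$ is singular along the strict transforms of the fibres $L^1_I\times\PP^{n+1}$ over \emph{all} linear spans of the projected points, with multiplicity $\max\{0,\,|I|m_1-(|I|-1)d_1\}$; for the coefficients in \eqref{divisorDelta} this is positive for all $|I|$ up to roughly $n/2$, and similarly for $D_2$ (e.g.\ for $n=2$ the fibres $\PP^2\times L^2_{\{i,j\}}$ lie in $D_2$ with multiplicity $57$). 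Blowing up fibres over single points and bilinear spans never touches the generic point of such a center, so after your blowups the strict transform of $\Delta$ still has multiplicity $>1$ along these loci and the union with the exceptional divisors is not snc. The same applies to the mixed products $L^1_I\times L^2_J$ with $I\neq J$, which are not bilinear spans of any subset of the $p_i$ but lie in the non-snc locus of $D_1\cup D_2$. Since your $\widetilde X$ is not a log resolution (already for $n=2$), checking discrepancies only of the divisors you extract proves nothing about the klt property. The paper instead blows up the full collection $A\cup B\cup C$ of Notation \ref{notation:fibres}, after first proving that this collection is closed under intersection so that the successive blowups have smooth, disjoint centers in each dimension.

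There is a second, directional problem: you propose to read the multiplicities of $\Delta$ along the centers off the base locus inequalities of Section \ref{section-bli}, but those give \emph{lower} bounds (their parts (b) are equalities only for the divisorial components $F_1,F_2$), whereas the inequality $\discrep(E)>-1$ requires an \emph{upper} bound on the multiplicity of $\Delta$ along each center. The paper obtains exact multiplicities in Proposition \ref{prop:stricttransform} by factoring the resolution through blowups of $X^{n,0}_{n+2}$ and invoking the exact base-locus description of \cite{DP19}; the same mechanism gives base-point freeness of $\widetilde D_1,\widetilde D_2$ (Proposition \ref{basepoint-freeness}) and hence the snc property of the resolution. Finally, with all of $A\cup B\cup C$ in play one must control discrepancies for arbitrarily large index sets; the paper does this by a monotonicity trick — the inequality $d_i-m_i\ge 4(n+1)(n+2)$ makes the discrepancy increase with $|I|$, reducing everything to $|I|\le 2$ — together with the additivity $\discrep(\alpha\cap\beta)=\discrep(\alpha)+\discrep(\beta)+1$ for elements of $C$, which is the rigorous form of the ``splitting'' observation you make but do not justify.
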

 Let us give some explanation of how the divisors appearing in $\Delta$ were found. Our strategy was to choose an effective divisor of the form $\Delta=\epsilon \left(D_1+D_2\right)$, with $D_1$ and $D_2$ divisors pulled back from the two factors and $\epsilon$ a positive coefficient, in such a way that $-K-\Delta$ is ample and the discrepancies of $(X,\Delta)$ are greater than $-1$. These conditions translate to affine-linear inequalities on $\epsilon$ and the coefficients of the $D_i$. Computer experiments for small values of $n$ suggests plausible candidates for $D_1$, $D_2$ and $\epsilon$, and the proof of the theorem then consists in checking that these candidates do indeed give a log Fano pair for all $n$.

Recall, from  Notation \ref{notation:fibres}, the following sets of subvarieties of $\PP^n\times\PP^{n+1}$ obtained by taking fibres over  linear spans of points under the first and second projections, respectively, and their intersections:
\begin{align}\label{notation:ABC}
A&=\{L^1_I\times\PP^{n+1}: I\subset\{1,\dots,n+2\},1\le |I|\le n\},\notag\\
B&=\{\PP^{n}\times L^2_J: J\subset\{1,\dots,n+2\}, 1\le |J|\le n+1\}.\\
C&=\{L^1_I\times L^2_J: I,J\subset\{1,\dots,n+2\},1\le |I|\le n,  1\le |I|\le n+1\}.\notag
\end{align}
Notice that if $I=J$, then $L^1_I\times L^2_I=\BL(p_i:i\in I)$ is the bilinear span of the points $\{p_i:i\in I\}$.
By abuse of notation, we will use the same symbols for the strict transforms in $X$ of the elements of $A$, $B$ and $C$.

\begin{lemma}
The set $A\cup B\cup C$ is closed under intersection in $X$.
\end{lemma}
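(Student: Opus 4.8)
The plan is to verify closure under intersection by taking two arbitrary elements of $A\cup B\cup C$ and showing their intersection (as strict transforms in $X$) again lies in $A\cup B\cup C$. The key observation is that every element of these three sets is a product of a linear subspace of $\PP^n$ spanned by a subset of the projected points with a linear subspace of $\PP^{n+1}$ spanned by a subset of the projected points: an element of $A$ has the form $L^1_I\times\PP^{n+1}$, an element of $B$ has the form $\PP^n\times L^2_J$, and an element of $C$ has the form $L^1_I\times L^2_J$. Writing $\PP^n=L^1_{\{1,\dots,n+2\}}$ (abusively, the whole space) and similarly for the second factor, all three types fit the uniform pattern $L^1_I\times L^2_J$, where $I,J\subseteq\{1,\dots,n+2\}$ are allowed to index the whole ambient space when the relevant factor is uncut. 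So the first step is to set up this uniform description.

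Next I would compute the intersection of two such products. For sets $L^1_{I_1}\times L^2_{J_1}$ and $L^1_{I_2}\times L^2_{J_2}$, the intersection in $\PP^n\times\PP^{n+1}$ is $(L^1_{I_1}\cap L^1_{I_2})\times(L^2_{J_1}\cap L^2_{J_2})$. The crucial input is that the points are in \emph{general position}, so that for linear spans of general points we have $L^1_{I_1}\cap L^1_{I_2}=L^1_{I_1\cap I_2}$ whenever the dimensions are in the expected range, and similarly in the second factor; here one uses that general points impose independent conditions, so spans of subsets meet exactly in the span of the common subset. Thus the intersection is again of the form $L^1_{I_1\cap I_2}\times L^2_{J_1\cap J_2}$, which is an element of $A$, $B$, or $C$ according to whether $I_1\cap I_2$ and $J_1\cap J_2$ are proper or the whole index set. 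One must also treat the degenerate possibility that an intersection is empty or lower-dimensional than expected, but by general position this does not occur for the relevant cardinality ranges; and the edge cases where an intersection collapses to a point or to one of the blown-up points $p_i$ are handled by general position as well.

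The step requiring the most care is the passage from intersections in $\PP^n\times\PP^{n+1}$ to intersections of \emph{strict transforms} in the blowup $X$. A priori the strict transform of an intersection need not equal the intersection of strict transforms, since the blowup can separate components or introduce exceptional contributions over the points $p_i$. The main obstacle is therefore to check that, because each element of $A\cup B\cup C$ is a strict transform and the centers of the blowup are the points $p_i$ (which lie on these subvarieties in a controlled way), the intersection of the strict transforms coincides set-theoretically with the strict transform of the intersection. I would argue this by noting that away from the exceptional divisors the two agree trivially, and that over each $p_i$ the local structure of the smooth product subvarieties through $p_i$ ensures no spurious exceptional components appear, so the closure of the intersection in $X$ is exactly the strict transform of $L^1_{I_1\cap I_2}\times L^2_{J_1\cap J_2}$. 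Once this is established, the set-theoretic identification completes the proof that $A\cup B\cup C$ is closed under intersection.
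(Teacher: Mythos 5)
Your key step fails in the first factor, and no amount of genericity can repair it. The claim that ``spans of subsets of general points meet exactly in the span of the common subset'' is true only when the points indexed by $I_1\cup I_2$ are linearly independent, i.e.\ when $|I_1\cup I_2|\le n+1$. But $A$ is built from $n+2$ projected points in $\PP^n$ --- one more than a projectively independent set --- and whenever $I_1\cup I_2=\{1,\dots,n+2\}$ a dimension count gives
$\dim\bigl(L^1_{I_1}\cap L^1_{I_2}\bigr)\ \ge\ (|I_1|-1)+(|I_2|-1)-n\ =\ |I_1\cap I_2|\ >\ \dim L^1_{I_1\cap I_2}$,
so the intersection of the spans is strictly larger than the span of the common indices. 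Concretely, for $n=2$ the lines $L^1_{\{1,2\}}$ and $L^1_{\{3,4\}}$ in $\PP^2$ always meet in a point $q$ distinct from every $p^1_i$, so $(L^1_{\{1,2\}}\times\PP^{3})\cap(L^1_{\{3,4\}}\times\PP^{3})=\{q\}\times\PP^{3}$, which lies in none of $A$, $B$, $C$. Thus the statement, read at face value for the full sets of Notation \ref{notation:fibres}, is not provable by general position --- it is false in that reading. (Your argument is sound for the second factor, where the $n+2$ points of $\PP^{n+1}$ are projectively independent and their spans behave like coordinate subspaces; this is exactly how the paper treats $B$.)

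The missing ingredient is arithmetic, not geometric. In the paper's proof the elements of $A$ are implicitly only those $\alpha=L^1_I\times\PP^{n+1}$ that occur with positive multiplicity $\kappa_\alpha(D_1)>0$ in the base locus of the specific divisor $D_1$ of bidegree $(d_1,0)$, and the effectivity inequalities of Theorem \ref{eff-cone-n+2} are what exclude the bad configurations: if two such elements had disjoint index sets with $|I_1|+|I_2|=n+2$, the sum of their positive $\kappa$'s would force $(n+2)m_1>nd_1$, contradicting inequality $(\mathcal I_2)$ for $D_1$; hence disjoint index sets force $|I_1\cup I_2|\le n+1$, and then general position does give disjoint spans. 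The overlapping case is reduced to the disjoint one via the identity $L_{I_1}\cap L_{I_2}=\langle L_{I_1\setminus I_2}\cap L_{I_2\setminus I_1},\,L_{I_1\cap I_2}\rangle$, using the monotonicity $\kappa_{I}(D_1)\le\kappa_{I\setminus\{i\}}(D_1)$ (from $(\mathcal I_1)$) to know that the difference sets still index elements of $A$. Your proposal never invokes $D_1$ or these inequalities, so it cannot close this gap; the concern you do raise, about strict transforms versus intersections, is a legitimate check but is secondary to the real obstruction.
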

\begin{proof}
It is enough to show that $A$ and $B$ are closed under intersection, by the construction of $C$.

That $B$ is closed under intersection follows because  $\{L^2_J: 1\le |J|\le n+1\}$ is. In fact $n+2$ points in general position in $\PP^{n+1}$ form a set of homogeneous coordinate points, so that the $L^2_J$'s are coordinate linear subspaces of $\PP^{n+1}$. Such set is closed under intersection.

We prove the claim for $A$. Let $L^1_{I_1}\times\PP^{n+1}$ and $L^1_{I_2}\times\PP^{n+1}$ be two distinct elements, with $|I_i|=:a_i+1$.

Case 1. Assume that $I_1\cap I_2=\emptyset$. In this case $a+a'< n$. In fact, since $I_1,I_2\subset\{1,\dots,n+2\}$, then $|I_1|+|I_2|\le n+2$, i.e. $a+a'\le n$. If $a+a'=n$, we obtain the inequality $0<k_a(D_1)+k_{a'}(D_1)=(n+1)m_1-nd_1$, which contradicts the effectivity condition $(\mathcal{I}_2)$ that must be satisfied by $D_1$ by Theorem \ref{eff-cone-n+2}.
Hence it must be that $a+a'<n$. In this case, by the generality of the points, we conclude that $L_{I_1\cap I_2}=\emptyset$.

Case 2.  Assume that $I_1\cap I_2\ne\emptyset$. Notice that the obvious identities 
$I_i=(I_i\setminus I_j) \cup (I_i\cap I_j)$, for $\{i,j\}=\{1,2\}$, give
\begin{equation}\label{linear-intersections}L_{I_1}\cap L_{I_2}=\langle L_{I_2\setminus I_2}\cap L_{I_2\setminus I_2}, L_{I_1\cap I_2}\rangle.\end{equation}
Since $D_1$ is effective, using the inequality $(\mathcal{I}_1)$ of Theorem \ref{eff-cone-n+2}, we have that $k_{I_1}(D_1)\le k_{I_1\setminus I_2}(D_1)$ and  $k_{I_2}(D_1)\le k_{I_2\setminus I_1}(D_1)$. This implies that the products $L_{I_1\setminus I_2}\times\PP^{n+1}$ and $L_{I_2\setminus I_1}\times\PP^{n+1}$ are elements of $A$. Using Case 1,  since $(I_1\setminus I_2)\cap (I_2\setminus I_1)=\emptyset$, we obtain $L_{I_1\setminus I_2}\cap L_{I_2\setminus I_1}=\emptyset$ and so \eqref{linear-intersections} gives $L_{I_1}\cap L_{I_2}= L_{I_1\cap I_2}.$
\end{proof}

A log resolution $\phi: \widetilde{X}\to X$ of $(X,\Delta)$ is obtained by subsequently blowing-up the elements of $A\cup B\cup C$, following the inclusion order. The fact that $A\cup B\cup C$ is closed under intersection guarantees that, at every step of the sequence of blow-ups, the strict transforms of elements of the set of equal dimension are disjoint.

Call $\widetilde{D}_1$ and $\widetilde{D}_2$ the strict transforms of $D_1$ and $D_2$ respectively, under this composition of blow-ups. 
We will show that the union of $\widetilde{D}_1$, $\widetilde{D}_2$ and of the exceptional divisors on $\widetilde{X}$ is simple normal crossing. 
To prove the claim, we need the following result.

\begin{proposition}\label{basepoint-freeness}
 The divisors $\widetilde{D_1}$ and $\widetilde{D_2}$ are basepoint-free on $\widetilde{X}$, and in particular general members are smooth.
\end{proposition}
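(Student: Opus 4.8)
The plan is to treat $\widetilde D_1$ and $\widetilde D_2$ separately, because although they play symmetric roles in the definition of $\Delta$, the two factors behave very differently. In both cases the first step is to reduce the question to the relevant factor. Since $D_j$ is a pull-back from the $j$th factor, its base locus is constant in the fibre direction of $\pi_{3-j}$, so it is governed by the associated class $\bar D_j = d_j\bar H - m_j\sum_i \bar E_i$ on the blow-up of the factor at the projected points $p_i^j$. Extending the factorisation recorded in the diagram preceding Lemma \ref{bli for pullbacks}, the morphism $\pi_j \circ \phi \colon \widetilde X \to \PP^{n-1+j}$ factors through the iterated blow-up $Y_j$ of $\PP^{n-1+j}$ along the linear spans $L^j_I$ (these are exactly the images under $\pi_j$ of the centres in $A$, resp. $B$), and $\widetilde D_j$ is the pull-back of the strict transform of $\bar D_j$ on $Y_j$. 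Hence it suffices to prove that this strict transform is basepoint-free on $Y_j$.

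For $D_2$ the factor is $\PP^{n+1}$ blown up at $n+2$ general points. Since $\PP^{n+1}$ has exactly $n+2$ coordinate points, these may be taken to be the coordinate points, the spans $L^2_J$ are then coordinate subspaces, and the iterated blow-up $Y_2$ is a smooth projective toric variety. The class $\bar D_2$ is symmetric in the $\bar E_i$ and hence torus-invariant, so its strict transform on $Y_2$ is a toric divisor; on a complete toric variety a nef divisor is automatically basepoint-free. It therefore remains only to check nefness of this strict transform, which follows from the explicit values of $d_2,m_2$ by a direct verification against the (finitely many) curve classes generating the Mori cone of $Y_2$. This disposes of $D_2$.

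For $D_1$ the factor is $\PP^n$ blown up at $n+2$ general points, that is, $n+1$ coordinate points together with one further general point; this configuration is a projective frame, the resolution $Y_1$ is \emph{not} toric, and the toric shortcut is unavailable. Here I would argue in two steps. First, use the exact base locus lemma, Proposition \ref{BLI pulledback hyperplanes}(b) (valid for $s\le n+2$), to show that the only divisorial fixed components of $\bar D_1$ are the hyperplanes $\bar H - \sum_{k=1}^n \bar E_{i_k}$ through $n$ of the points, occurring with exactly the stated multiplicity; after subtracting these the residual class is movable, with stable base locus supported on the lower-dimensional spans $L^1_I$ that $Y_1$ blows up. Second, since $X^{n,0}_{n+2}$ is a Mori dream space (indeed log Fano, as $n+2\le n+3$, by Mukai \cite{Mukai05}, Castravet--Tevelev \cite{CT06} and Araujo--Massarenti \cite{AM16}), this movable class is semiample on a suitable small modification that $Y_1$ dominates, and I would upgrade this to genuine basepoint-freeness on $Y_1$ --- concretely by reducing via the standard Cremona involution based at the $n+1$ coordinate points to a class whose global generation is manifest.

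The main obstacle is precisely this last point for $D_1$: passing from nefness of the residual class on the non-toric resolution $Y_1$ to basepoint-freeness, since on a general Mori dream space nef classes are only semiample. This is where the specific numerical values of $d_1$ and $m_1$ are used: they are chosen so that $\bar D_1$ is strictly positive on the distinguished moving curves --- for instance $\bar D_1\cdot(n\bar l - \sum_{i=1}^{n+2}\bar e_i) = (n+1)(n+2)>0$ on the rational normal curve of degree $n$ through the points --- while, by the base locus lemma, every fixed contribution comes from a linear span that $Y_1$ resolves. Once basepoint-freeness of $\widetilde D_1$ and $\widetilde D_2$ is established, the final assertion that general members are smooth is immediate from Bertini's theorem applied on the smooth variety $\widetilde X$.
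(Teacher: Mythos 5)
Your reduction to the factors coincides with the paper's: the morphism from $\widetilde X$ to $\PP^{n-1+j}$ factors through the iterated blow-up of the factor along the spans $L^j_I$ (the paper's $\widetilde X^{n,0}_{n+2}$, your $Y_1$, constructed via the universal property exactly as you assert), and $\widetilde D_j$ is the pull-back of the strict transform $\widetilde D^n_j$ living there, which is the paper's equation \eqref{lifting-strict-transforms}. Your toric treatment of $D_2$ is then a genuinely different route from the paper's: since the $n+2$ projected points in $\PP^{n+1}$ may be taken to be the coordinate points, $Y_2$ is indeed a smooth complete toric variety, and nef line bundles on such varieties are globally generated. That said, you leave the decisive nefness of the strict transform class as an unexecuted ``direct verification'' against the Mori cone of $Y_2$; that cone has many generators, and without the computation this half is a plan rather than a proof.

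The genuine gap is in the $D_1$ case, and you flag it yourself. The chain ``movable on a Mori dream space $\Rightarrow$ semiample on some small modification dominated by $Y_1$ $\Rightarrow$ upgrade to basepoint-freeness via the standard Cremona involution'' does not establish the statement. Semiampleness only gives that $|m\widetilde D^n_1|$ is free for some $m>0$, which is not sufficient for the intended application: the log-resolution argument needs the linear system $|\widetilde D_1|$ itself to be free, so that Bertini makes its \emph{general member} smooth and the subsequent transversality statements meaningful. Moreover, the Cremona involution based at $n+1$ of the points is not shown to induce a morphism (or even a pseudo-automorphism) of $Y_1$, and you do not say which class $\widetilde D^n_1$ is carried to or why its global generation is ``manifest''. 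The paper closes precisely this hole by citation: by \cite[Theorem 2.6]{DP19} the base locus of $D^n_1$ on $X^{n,0}_{n+2}$ is a union of linear spans with computable multiplicities, and by \cite[Theorem 2.1 and Remark 2.5]{DP19} the strict transform on the iterated blow-up along all those spans is basepoint-free. Without that input, or a complete replacement for it, your proof of the $D_1$ half does not go through; note also that the same citation handles $D_2$, so the paper needs no separate toric argument there.
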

  \begin{proof}
We show the statement for $\widetilde{D_1}$; the proof for $\widetilde{D_2}$ is similar.

Set $X:=X^{n,n+1}_{n+2}$ and consider the sets of fibres and intersections of fibres $A,B$ and $C$ defined in \eqref{notation:ABC}, see also Notation \ref{notation:fibres}.
Consider the subsets 
\begin{align*} 
A_0& :=\{L_I^1\times\PP^{n+1}\in A: |I|=1\}\subset A\\ 
B_0&:=\{\PP^n\times L_J^2\in B: |J|=1\}\subset B\\
C_0&:=\{L^1_I\times L^2_J \in C: |I|=|J|=1\}\subseteq C
\end{align*}
Let  $\overline{\phi}:\overline{X}\to X$ be the subsequent blow-up of $X$ along the elements of $A_0\cup B_0\cup C_0$ following the inclusion order, i.e. first blowing up $C_0$, then $A_0$ and $B_0$. Recall that  $\pi_1 \colon X^{n,n+1}_{n+2}\to \PP^n$ denotes  the natural projection, cf. Notation \ref{notation:classes}.
By the universal property of the blow-up, $\pi_1\circ\overline{\phi}$ factors through $X^{n,0}_{n+2}\to \PP^n$, the blow-up of $\PP^n$ at the projections of the $n+2$ points of $\PP^n\times\PP^{n+1}$. Call $\overline{\pi}_1$ the corresponding map, see  Diagram \eqref{diagram2}.

Now, let $\widetilde{\phi}:\widetilde{X}\to  \overline{X}$ be the blow-up along the strict transforms in $\overline{X}$ of the remaining elements of $A\cup B \cup C$, following the inclusion order, and consider the composition 
$\overline{\pi}_1\circ\widetilde{\phi}:\widetilde{X}\to X^{n,0}_{n+2}$.
Denote with $E_\alpha, E_\beta, E_\gamma$ the exceptional divisor of elements $\alpha\in A$, $\beta \in B$ and $\gamma\in C$ respectively.

If $\alpha \in A$, there is an index set $I\subset\{1,\dots,n+2\}$ of cardinality $|I|\le n$ such that  we can write $\alpha=\alpha^n\times\PP^{n+1}=L^1_I \times \PP^{n+1} \subset X^{n,n+1}_{n+2}$, where $\alpha^n=L^1_I\subset X^{n,0}_{n+2}$ is the strict transform on $X^{n,0}_{n+2}$ of the linear span of $|I|$ points of $\PP^n$.
The pullback under $\overline{\pi}_1\circ\widetilde{\phi}$ of $\alpha^n$ is the divisor:
\begin{equation}\label{lifting-exceptionals}
(\overline{\pi}_1\circ\widetilde{\phi})^\ast(\alpha^n)=E_\alpha +\sum_{\gamma: \gamma \subset \alpha}E_\gamma 
\end{equation}
Hence, again by the universal property of the blow-up, $\overline{\pi}_1\circ\widetilde{\phi}$ factors through the blow-up $\phi^n:\widetilde{X}^{n,0}_{n+2}\to X^{n,0}_{n+2}$ along all $L^1_I$'s, with $E_{\alpha^n}$ exceptional divisors.  Call $\widetilde{\pi}_1:\widetilde{X}\to \widetilde{X}^{n,0}_{n+2}$  the so obtained map, see Diagram \eqref{diagram2}.
\begin{equation}\label{diagram2}\begin{tikzcd}
\widetilde{X}\arrow{r}{\widetilde{\phi}} \arrow{rd} \arrow{d}{\widetilde{\pi}_1} 
& \overline{X} \arrow{r}{\overline{\phi}} \arrow{rd} \arrow{d}{\overline{\pi}_1} & X \arrow{d}{\pi_1} \\
\widetilde{X}^{n,0}_{n+2} \arrow{r}{\phi^n}&   X^{n,0}_{n+2} \arrow{r}{} & \PP^n
\end{tikzcd}
\end{equation}

Now, consider the divisor $D_1^n$ on $X^{n,0}_{n+2}$ defined by the same formula as $D_1$, that is it is the strict transform on $X^{n,0}_{n+2}$ of a general hypersurface of $\PP^n$ of degree $d_1$ and passing through $n+2$ general points with assigned multiplicities $m_1,\dots,m_{n+2}$.
Let $\widetilde{D}^n_1$ denote the strict transform of $D^n_1$ under $\phi^n$.
A hypersurface of $\PP^n\times\PP^{n+1}$ of degree $(d_1,0)$ and multiplicities $m_1,\dots,m_{n+2}$ at $n+2$ points is a fibre over a hypersurface of $\PP^{n+1}$ of degree $d_1$ and multiplicities $m_1,\dots,m_{n+2}$ at $n+2$ points via the projection $\PP^n\times\PP^{n+1}\to\PP^n$. Hence
\begin{equation}\label{lifting-strict-transforms}\widetilde{D}_1=\widetilde{\pi}_1^\ast\widetilde{D}^n_1.\end{equation}
Results contained in  \cite[Theorem 2.1 and Remark 2.5]{DP19} show that the linear system of $\widetilde{D}^n_1$ is basepoint free; hence the same property holds for the linear system of $\widetilde{D}_1$. In particular,  by Bertini's Theorem, a general member is smooth. 
\end{proof}

Let $E_A, E_B, E_C$ denote the union of all exceptional divisors of elements of $A, B ,C$ respectively, and let $E=E_A\cup E_B\cup E_C$.

\begin{corollary}
In the above notation, the union of divisors 
$\widetilde{D}_1\cup \widetilde{D}_2\cup E$  is simple normal crossing in $\widetilde{X}$.
\end{corollary}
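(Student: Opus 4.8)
The plan is to verify the three conditions that together characterise a simple normal crossing divisor: (i) the exceptional locus $E = E_A\cup E_B\cup E_C$ is itself SNC; (ii) the strict transforms $\widetilde{D}_1,\widetilde{D}_2$ are smooth; and (iii) each $\widetilde{D}_i$ meets every stratum of the remaining divisors transversally. Conditions (ii) and (iii) will follow from basepoint-freeness (Proposition \ref{basepoint-freeness}) together with repeated application of Bertini's theorem in characteristic zero, so the bulk of the work lies in condition (i).

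First I would establish that $E$ is SNC. The centres in $A\cup B\cup C$ are smooth, each being a product of linear subspaces or a strict transform of one, and by the preceding Lemma the collection is closed under intersection; moreover any two distinct centres intersect \emph{cleanly}, since their intersection is again a product of linear subspaces of the expected dimension. Blowing these up in order of increasing dimension is precisely the ``wonderful model'' construction for an arrangement closed under intersection, whose exceptional boundary is known to be SNC. Concretely, I would argue by induction on the blow-up steps: at the stage where we blow up the centres of a given dimension, these are pairwise disjoint after the previous blow-ups (this is exactly the disjointness observed after the closure-under-intersection Lemma), each is smooth, and each meets the SNC divisor formed by the exceptional divisors of the earlier, lower-dimensional, blow-ups transversally. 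Since blowing up a smooth centre that crosses an existing SNC divisor normally preserves the SNC property and produces a new exceptional divisor meeting the old ones transversally, it follows inductively that $E$ is SNC.

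Next, by Proposition \ref{basepoint-freeness} the linear systems $|\widetilde{D}_1|$ and $|\widetilde{D}_2|$ are basepoint-free, so each defines a morphism to projective space and a general member is the preimage of a general hyperplane. I would first fix a general $\widetilde{D}_1$: applying Bertini both to $\widetilde{X}$ and to each smooth stratum $Z$ of $E$, the general member $\widetilde{D}_1$ is smooth and, for every stratum simultaneously, either misses $Z$ or meets it in a smooth divisor of codimension one in $Z$. This transversality to all strata means $\widetilde{D}_1\cup E$ is again SNC. I would then choose $\widetilde{D}_2$ general \emph{after} fixing $\widetilde{D}_1$, running the identical Bertini argument against each stratum of the (now finer) SNC divisor $\widetilde{D}_1\cup E$; basepoint-freeness of $|\widetilde{D}_2|$ is exactly what guarantees transversality to all of these strata. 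Carrying this out yields that $\widetilde{D}_1\cup\widetilde{D}_2\cup E$ is simple normal crossing.

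The step I expect to be the main obstacle is condition (i): one must genuinely check that the arrangement $A\cup B\cup C$ consists of smooth centres whose mutual intersections are clean and of the expected dimension, so that the iterated blow-up produces an SNC exceptional divisor and, in particular, each centre crosses the previously created exceptional divisors normally at the moment it is blown up. This rests entirely on the product-of-linear-subspaces description of the elements of $A$, $B$ and $C$ and on the closure-under-intersection Lemma. Once the clean-intersection structure is in hand, conditions (ii) and (iii) reduce to a routine double application of Bertini, made possible precisely by the basepoint-freeness established in Proposition \ref{basepoint-freeness}; the pulled-back nature of $\widetilde{D}_1$ and $\widetilde{D}_2$ from the two different factors causes no difficulty, since transversality is extracted stratum-by-stratum from the generality of the chosen members.
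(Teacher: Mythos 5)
Your argument is correct, but the mechanism you use for transversality is genuinely different from the paper's. The paper never runs Bertini against the strata of $E$: it exploits the product structure instead. Since $\widetilde{D}_1=\widetilde{\pi}_1^\ast\widetilde{D}^n_1$ and $\widetilde{D}_2=\widetilde{\pi}_2^\ast\widetilde{D}^n_2$ (see \eqref{lifting-strict-transforms}), i.e.\ the two divisors are pulled back under morphisms to blowups of the \emph{two different factors}, the intersections $\widetilde{D}_1\cap\widetilde{D}_2$, $\widetilde{D}_1\cap E_\beta$ for $\beta\in B$, and $\widetilde{D}_2\cap E_\alpha$ for $\alpha\in A$ are transverse for structural reasons, with no genericity argument at all; the remaining configurations $\widetilde{D}_1\cup E_A\cup E_C$ and $\widetilde{D}_2\cup E_B\cup E_C$ are snc because the corresponding configuration downstairs, e.g.\ $\widetilde{D}^n_1\cup\bigcup_{\alpha\in A}E_{\alpha^n}$ on $\widetilde{X}^{n,0}_{n+2}$, is snc by \cite[Corollary 2.10]{DP19}, and this is transported upstairs via the identities \eqref{lifting-exceptionals} and \eqref{lifting-strict-transforms}. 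Your two-step Bertini argument --- fix $\widetilde{D}_1$ general and transverse to all strata of $E$, then fix $\widetilde{D}_2$ general and transverse to all strata of $\widetilde{D}_1\cup E$ --- is sound: $D_1$ and $D_2$ are independent general members of their systems, basepoint-freeness (Proposition \ref{basepoint-freeness}) restricts to basepoint-freeness on every smooth stratum, and a smooth divisorial scheme-theoretic intersection of expected codimension is equivalent to transversality; so each step loses nothing. What your route buys is self-containedness: beyond Proposition \ref{basepoint-freeness} you need no further input from \cite{DP19} and no pullback identities. What the paper's route buys is an exact, non-generic description of all the intersections, which is the same structure that feeds the class and discrepancy computations in Proposition \ref{prop:stricttransform}. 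On the snc-ness of $E$ itself, which you single out as the main obstacle and prove by induction on the blowup steps, you and the paper in fact agree: both arguments rest on the preceding lemma that $A\cup B\cup C$ is closed under intersection, which keeps the equal-dimensional centres disjoint at each stage; the paper simply asserts the conclusion, while you spell out the induction.
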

\begin{proof}
That the divisors $\widetilde{D}_1$ and $\widetilde{D}_2$ are smooth follows from Proposition \ref{basepoint-freeness}; all exceptional divisors are smooth since they arise from blowups along smooth centres, and the union $E$ is snc. 

The strict transforms $\widetilde{D}_1$ and $\widetilde{D}_2$ intersect transversely since they are pull-backs of divisors from different factors, i.e. $\widetilde{D}_1=(\tilde{\pi}_1)^\ast \widetilde{D}^n_1$, as in \eqref{lifting-strict-transforms}, and $\widetilde{D}_1=(\tilde{\pi}_2)^\ast \widetilde{D}^n_2$ similarly.

For the same reason, $\widetilde{D}_1$ and $E_\beta$ intersect transversely for every $\beta \in B$ so $\widetilde{D}_1\cup E_B$ is snc. Similarly $\widetilde{D}_2\cup E_A$ is snc.

Since $\widetilde{D}^n_1 \cup \bigcup_{\alpha \in A} E_{\alpha^n}$ is snc in $\widetilde{X}^{n,0}_{n+2}$, see \cite[Corollary 2.10]{DP19}, using \eqref{lifting-exceptionals} and \eqref{lifting-strict-transforms} we obtain that $\widetilde{D}_1\cup E_A\cup E_C$ is snc in $\widetilde{X}$. Similarly $\widetilde{D}_2\cup E_B\cup E_C$ is snc.
\end{proof}

We now describe explicitly the class of the strict transforms of general pull-back divisors $D_1$ and $D_2$ on $\tilde{X}$.
\begin{proposition}\label{prop:stricttransform}
\begin{enumerate}
\item Let $D_1=d_1H_1-\sum_{i=1}^{n+2}m_iE_i$ be a pullback divisor.
Then the strict transform $\widetilde{D}_1$ via the blow-up of $X$ along $A\cup B\cup C$ has the following class:
$$
\widetilde{D}_1=d_1H_1-\sum_{i=1}^{n+2}m_i E_i-\sum_{\alpha\in A}\kappa_\alpha(D_1)E_\alpha-\sum_{\gamma\in C}\kappa_\alpha(D_1)E_\gamma
$$
where $\alpha=L^1_I\times\PP^{n+1}$ and $\kappa_\alpha(D_1)=\max\{0,\sum_{i\in I}m_i-(|I|-1)d_1\}$, for  $I\subset\{1,\dots,n+2\}$.

\item Let $D_2=d_2H_2-\sum_{i=1}^{n+2}m_iE_i$ be a pullback divisor.
Then the strict transform $\widetilde{D}_2$ via the blow-up of $X$ along $A\cup B\cup C$ has the following class:
$$
\widetilde{D}_2=d_2H_2-\sum_{i=1}^{n+2}m_i E_i-\sum_{\beta\in B}\kappa_\beta(D_2)E_\beta-\sum_{\gamma\in C}\kappa_\beta(D_2)E_\gamma
$$
where $\beta=\PP^{n}\times L^2_J $ and $\kappa_\beta(D_2)=\max\{\sum_{i \in J}m_j-(|J|-1)d_2\}$, for  $J\subset\{1,\dots,n+2\}$.
\end{enumerate}
\end{proposition}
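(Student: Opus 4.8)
The formula is an instance of the standard strict-transform bookkeeping for a composition of blowups along smooth centres. Writing $\phi\colon\widetilde{X}\to X$ for the blowup of the centres in $A\cup B\cup C$ in inclusion order, I would start from
\[
\widetilde{D}_1 = \phi^* D_1 - \sum_{Z\in A\cup B\cup C}\operatorname{mult}_Z(D_1)\,E_Z,
\]
where $E_Z$ is the exceptional prime divisor over $Z$ and $\operatorname{mult}_Z(D_1)$ is the multiplicity of $D_1$ along $Z$ taken at the generic point of $Z$. The one point here that needs justification is that, although the centres are nested, blowing up a proper subvariety of $Z$ before $Z$ does not alter this coefficient: the generic point of (the strict transform of) $Z$ avoids the smaller centres, so the multiplicity there is already computed on $X$. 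With the convention that $H_1$ and the $E_i$ denote total transforms, $\phi^* D_1 = d_1H_1-\sum_i m_iE_i$, so it remains only to identify the three families of multiplicities.

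The multiplicities are read off from the fact that $D_1$ is pulled back from a general hypersurface $S\subset\PP^n$ of degree $d_1$ with assigned multiplicity $m_i$ at $p_i^1$. For $\beta=\PP^n\times L^2_J\in B$ the generic point projects to a general point of $\PP^n$, which does not lie on $S$, so $\operatorname{mult}_\beta(D_1)=0$ and no $E_\beta$ appears. For $\alpha=L^1_I\times\PP^{n+1}\in A$ the multiplicity equals that of $S$ along the linear span $L^1_I$, namely the classical value $\kappa_\alpha(D_1)=\max\{0,\sum_{i\in I}m_i-(|I|-1)d_1\}$ (see \cite{DP19}). Finally, for $\gamma=L^1_I\times L^2_J\in C$, which projects onto $L^1_I$ under $\pi_1$ and is contained in the unique $\alpha=L^1_I\times\PP^{n+1}\in A$ with the same first-factor index set, the pullback nature of $D_1$ forces $\operatorname{mult}_\gamma(D_1)=\kappa_\alpha(D_1)$. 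Substituting these three values gives exactly the claimed class, and part (2) follows verbatim after exchanging the two factors (so that the roles of $A$ and $B$ are swapped and the coefficients $\kappa_\beta(D_2)=\max\{0,\sum_{j\in J}m_j-(|J|-1)d_2\}$ appear).

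The same conclusion can be obtained, more in keeping with the machinery already set up, from the factorisation $\widetilde{D}_1=\widetilde{\pi}_1^*\widetilde{D}_1^n$ of \eqref{lifting-strict-transforms} together with the known class of $\widetilde{D}_1^n$ on $\widetilde{X}^{n,0}_{n+2}$ from \cite{DP19} and the pullback relation $\widetilde{\pi}_1^* E_{\alpha^n}=E_\alpha+\sum_{\gamma\subset\alpha}E_\gamma$ of \eqref{lifting-exceptionals}; the collapse of the nested sum over $\gamma\subset\alpha$ again produces the common coefficient $\kappa_\gamma(D_1)=\kappa_\alpha(D_1)$. The main obstacle either way is precisely the justification underlying the first displayed formula, namely controlling how the multiplicity of $D_1$ along a centre interacts with the earlier blowups of its subvarieties and confirming that the $B$-exceptionals genuinely drop out. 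Once the generic-point computation of $\operatorname{mult}_Z(D_1)$ and the classical multiplicity $\kappa_\alpha(D_1)$ of a hypersurface along a linear span are in hand, the rest is the routine reindexing described above.
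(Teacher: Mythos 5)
Your proposal is correct, and it in fact contains two proofs. Your second sketch is precisely the paper's argument: the paper passes to the divisor $D^n_1$ on $X^{n,0}_{n+2}$, quotes \cite[Theorem 2.6]{DP19} for its base locus and \cite[Notation 1.6]{DP19} for the class of $\widetilde{D}^n_1$ on $\widetilde{X}^{n,0}_{n+2}$, and then pulls back along $\widetilde{\pi}_1$ via \eqref{lifting-strict-transforms} and \eqref{lifting-exceptionals}, using that each $\gamma\in C$ lies in a unique $\alpha\in A$ with the same image in $\PP^n$ and that $\kappa_\gamma(D_1)=\kappa_\alpha(D_1)$. Your primary route is genuinely different in structure: it never leaves $\widetilde{X}$, writing $\widetilde{D}_1=\phi^*D_1-\sum_Z\operatorname{mult}_Z(D_1)E_Z$ and computing the multiplicities directly. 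The two points you flag as delicate are the right ones, and your treatment of them is sound: since every centre is the strict transform of a subvariety of $X$, it is never contained in an earlier exceptional divisor, so pullbacks of earlier exceptional classes coincide with their strict transforms and generic-point multiplicities are unchanged by prior blowups of smaller centres; and the three multiplicity computations are correct (zero along $\beta\in B$ because $D_1$ is a pullback and $\pi_1|_\beta$ is dominant; $\kappa_\alpha(D_1)$ along $\alpha\in A$; and $\kappa_\alpha(D_1)$ along $\gamma\subset\alpha$ because a local equation of $D_1$ involves only first-factor coordinates, so its order at the generic point of $\gamma$ equals its order at the generic point of $\alpha$). What your direct route buys is that it dispenses with the intermediate spaces $\overline{X}$, $\widetilde{X}^{n,0}_{n+2}$ and diagram \eqref{diagram2}; what it does not dispense with is the essential external input, namely that a \emph{general} member of the system $D^n_1$ has multiplicity exactly $\max\{0,\sum_{i\in I}m_i-(|I|-1)d_1\}$ along each span $L^1_I$, which both you and the paper must import from \cite{DP19}. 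So the two arguments share the same mathematical core, packaged differently.
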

\begin{proof}
We prove the first statement; the second has a similar proof.

We use the same notation as in the proof of Proposition \ref{basepoint-freeness}: let $D^n_1$ be the divisor on $X^{n,0}_{n+2}$ with the same coefficients as $D_1$. For every element $\alpha \in A$  as in  \eqref{notation:ABC}, write $\alpha=\alpha^n\times\PP^{n+1}$ and consider the set 
$$
A^n=\{\alpha^n=L^1_I: 1\le |I|\le n\}.
$$
Using \cite[Theorem 2.6]{DP19}, we can describe the base scheme of $D^n_1$ as the formal sum
\begin{align*}
\textrm{BL}(D_1^n)&=\bigcup_{\alpha\in A: |I|>1} \kappa_{\alpha}(D_1) {\alpha}^n 
\end{align*}
where ${\alpha}^n$ is the strict transform in  $X^{n,0}_{n+2}$ of the linear span of $|I|$  points of $\PP^n$, for every $I\subset\{1,\dots,n+2\}$ with $2\le |I|\le n$.
In particular, the strict transform $\widetilde{D}^n_1$ of a general $D^n_1$ on $\widetilde{X}^{n,0}_{n+2}$ after blowing-up all the elements in $A^n$ with $|I|\ge2$ in order of increasing dimension  is  calculated in \cite[Notation 1.6]{DP19} to be the following
$$
\widetilde{D}^n_1=(\phi^n)^\ast D^n_1-\sum_{\alpha\in A: |I|>1}\kappa_{\alpha}(D_1)E_{\alpha^n},
$$
where the $E_{\alpha^n}$'s denote the exceptional divisors.
Now, using \eqref{lifting-strict-transforms}  we obtain the class of the strict transform $\widetilde{D}_1$: 
\begin{align*}
\widetilde{D}_1&=\widetilde{\pi}_1^\ast\widetilde{D}^n_1\\
&=
d_1H_1-\sum_{i=1}^{n+2}m_iE_i-
\sum_{\alpha \in A}\kappa_\alpha(D_1)\left(E_\alpha+\sum_{\gamma:\gamma\subset\alpha}E_\gamma\right)\\
\intertext{
where $E_\alpha$ and $E_\gamma$ denote the exceptional divisor on $\widetilde{X}$ of elements $\alpha\in A$ and $\gamma \in C$ respectively and $\kappa_\alpha(D_1)=\kappa_{\alpha^n}(D^n_1)=\max\{\sum_{i\in I}m_i-(|I|-1)d_1\}$.
Notice that for every $\gamma \in C$ there is a unique  $\alpha\in A$ with $\gamma\subset \alpha$ and such that $\gamma$ and $\alpha$ have the same image in $\PP^n$, and we have $\kappa_\gamma(D_1)=\kappa_\alpha(D_1)$ for such $\gamma$ and $\alpha$, hence, using \eqref{lifting-exceptionals}, we obtain 
}
\widetilde{D}_1&=d_1H_1-\sum_{i=1}^{n+2}m_iE_i-\sum_{\alpha\in A} \kappa_\alpha(D_1)E_\alpha-\sum_{\gamma\in C} \kappa_\gamma(D_1)E_\gamma.
\end{align*}
\end{proof}

We are ready to prove the main result of this section.

\begin{proof}[Proof of Theorem \ref{theoremXn,n+1,n+2-logFanoexplicit}]
We first prove that $-K_X-\Delta$ is ample, where 
\begin{align*}
-K_X=(n+1)H_1+(n+2)H_2-2n\sum_{i}E_i.    
\end{align*}
By Proposition \ref{proposition-nefcone} the Mori cone is generated by the curve classes $e_i$ and $l_j-e_i$. We have
\begin{align*}
(-K_X-\Delta)&\cdot e_i=
\frac{1}{2(n+1)(n+2)}>0\\
(-K_X-\Delta)&\cdot (l_1-e_i)=
\frac{1}{4(n+1)(n+2)}>0\\
(-K_X-\Delta)&\cdot (l_2-e_i)=
\frac{n-1}{4(n+1)(n+2)}>0    
\end{align*}
so $-K_X-\Delta$ is ample.

A log resolution of the pair $(X,\Delta)$ is obtained by blowing-up $A\cup B\cup C$. 
It remains to show that the pair is klt.
To do that, we use the explicit class of the strict transforms of $D_1$ and $D_2$ computed in Proposition \ref{prop:stricttransform}.
  \begin{itemize}
\item  Elements of $A$: take $\alpha=L^1_I\times \PP^{n+1}$, with $1\le|I|\le n$ and such that $\kappa_\alpha(D_1)>0$. The discrepancy of the exceptional divisor $E_\alpha$ is
$$
\discrep(\alpha)=
(n-|I|)-\frac{1}{4(n+1)(n+2)}(|I|m_1-(|I|-1)d_1).
$$
Notice that, since $d_1-m_1\ge 4(n+1)(n+2)$, as one can easily check, then we have the relation:
$$
\discrep(L^1_{I}\times \PP^{n+1})\ge \discrep(L^1_{I\setminus\{i\}}\times \PP^{n+1}).
$$
for every $i\in I$ and $|I|\ge 2$.
Therefore it is enough to prove that the discrepancy is $>-1$ in the initial cases $|I|=1,2$. We obtain
\begin{align*}
\discrep({p^1_i}\times \PP^{n+1})
&=(n-1)-\frac{1}{4(n+1)(n+2)}(m_1)\\ 
&=-\frac{6n+7}{4(n+1)(n+2)}\\ &>-1,\\
\discrep(
L^1_{\{i_1,i_2\}}\times \PP^{n+1})&=(n-2)-\frac{1}{4(n+1)(n+2)}(2m_1-d_1)\\ 
&=\frac{4n^2-n-8}{4(n+1)(n+2)}\\ &>-1. 
\end{align*}
Notice that if $\alpha\in A$ with $\kappa_\alpha(D_1)=0$, then $\discrep(\alpha)=n-|I|>-1.$
\item Elements of $B$: in this case the proof is similar. Let $\beta =\PP^n\times L^2_J$, with $1\le|J|\le n+1$ and such that $\kappa_\beta(D_2)>0$. The discrepancy of the exceptional divisor $E_\beta$ is 
$$
\discrep(\PP^{n}\times L^2_J)=(n+1-|J|)-\frac{1}{4(n+1)(n+2)}(|J|m_2-(|J|-1)d_2).
$$
Notice that, since $d_2-m_2\ge 4(n+1)(n+2)$, as one can easily check, then we have the relation:
$$
\discrep(\PP^{n}\times L^2_{J})\ge \discrep(\PP^{n}\times L^2_{J\setminus\{j\}}),
$$
for every $j\in  J$ and $|J|\ge2$.
Therefore it is enough to prove that the discrepancy is $>-1$ in the case $b=0,1$, i.e.
\begin{align*}
\discrep( \PP^{n}\times p^2_j)&=n-\frac{1}{4(n+1)(n+2)}(m_2)\\
&=-\frac{4n^2-6n-1}{4(n+1)(n+2)}\\
&>-1,\\
\discrep( \PP^{n}\times L^2_{\{j_1,j_2\}})
&=
(n-1)-\frac{1}{4(n+1)(n+2)}(2m_2-d_2)\\
&=-\frac{4n^2+n-9}{4(n+1)(n+2)}\\
\end{align*}
Notice that if $\beta\in B$ with $\kappa_\beta(D_1)=0$, then $\discrep(\beta)=n-|I|-1>-1.$

\item Elements of $C$: notice that the multiplicity of containment of $\alpha\cap\beta =L^1_I\times L^2_J$ in $D_1$ is $\kappa_\alpha(D_1)$ and in $D_2$ it is $\kappa_\beta(D_2)$. This gives 
$$
\discrep(\alpha\cap\beta)=
\discrep(\alpha)+\discrep(\beta)+1>-1,
$$
where the equality follows from an easy calculation and the inequality follows from the above two items.
\end{itemize}
\end{proof}

\section{Birational geometry of \texorpdfstring{$X^{2,3}_5$}{X{2,3}5}}
\label{section-X235}

In this section we tackle the first example in which bilinear secant varieties as defined in Section \ref{section-bilinear} play a key role, namely the variety $X^{2,3}_5$. First we compute the effective cone using the ``Cone Method'' introduced in \cite{GPP21}; we then prove that this variety is log Fano by exhibiting an explicit boundary divisor. 

\subsection{The effective and movable cones}

In this subsection we compute the effective and movable cones of $X^{2,3}_5$. We start by spelling out the relevant base locus lemmas that we need. 

For the rest of this section, we will write $X$ instead of $X^{2,3}_5$ as needed to simplify notation. We write $Q$ to denote the unique divisor on $X$ with class $2H_1-\sum_{i=1}^5 E_i$; geometrically it is the strict transform on $X$ of the preimage of the unique conic in $\PP^2$ passing through the projections of the points $p_1,\ldots,p_5$. In particular $Q$ is smooth and irreducible. We write $\overline{\BS_2(C)}$ to denote the strict transform on $X$ of the bilinear secant variety $\BS_2(C)$; by Proposition \ref{prop-secantmult} its class is $H_1+2H_2-2\sum_i E_i$.
\begin{proposition} \label{prop-bli-x235}
Let $D=d_1H_1+d_2H_2-\sum_{i=1}^5 m_i E_i$ be an effective divisor class on $X^{2,3}_5$. 
\begin{enumerate}
    \item[(a)] For any $\{i,j\} \subset \{1,\ldots,5\}$, the unique effective divisor with class $H_1-E_i-E_j$ is contained in the base locus $\Bs(D)$ with multiplicity at least $\max \{0,m_i+m_j-d_1-2d_2\}$.
    \item[(b)] For any $\{i,j,k\} \subset \{1,\ldots,5\}$, the unique effective divisor with class $H_2-E_i-E_j-E_k$ is contained in the base locus $\Bs(D)$ with multiplicity at least $\max \{0,m_i+m_j+m_k-2d_1-2d_2\}$.
    \item[(c)] The divisor $Q$ is contained in the base locus $\Bs(D)$ with multiplicity at least $\max\{0,\sum_i m_i - 2d_1-5d_2\}$.
    \item[(d)] The divisor $\overline{\BS_2(C)}$ is contained in the base locus $\Bs(D)$ with multiplicity at least $\max\{0,2\sum_k m_k - 5d_1-7d_2\}$ for each pair $\{i,j\} \subset \{1,\ldots,5\}$.
    \item[(e)] The divisor $\overline{\BS_2(C)}$ is contained in the base locus $\Bs(D)$ with multiplicity at least $\max\{0,\sum_i m_i -3d_1-3d_2\}$.
   
\end{enumerate}
\end{proposition}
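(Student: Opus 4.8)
The target multiplicity $\sum_i m_i - 3d_1 - 3d_2$ is precisely $-D\cdot\gamma$ for the curve class $\gamma = 3l_1 + 3l_2 - \sum_{i=1}^5 e_i$, the class of the strict transform of an irreducible curve of bidegree $(3,3)$ through $p_1,\dots,p_5$. So my plan is to prove (e) by the moving-curve method underlying the other base locus lemmas: exhibit a family of irreducible curves of class $\gamma$ sweeping out a dense subset of $\overline{\BS_2(C)}$, and then run the standard base-locus argument. Note that this bound differs from the one in part (d), which is the direct specialisation of Proposition \ref{prop-bli-bisecant} at $k=2$; part (e) is a genuinely new inequality that is sharper in a different region of $N^1(X)$, so a separate argument is needed.

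The key numerical input is $\overline{\BS_2(C)}\cdot\gamma = -1$: with $\overline{\BS_2(C)} = H_1 + 2H_2 - 2\sum_i E_i$ one computes $(H_1 + 2H_2 - 2\sum_i E_i)\cdot(3l_1 + 3l_2 - \sum_i e_i) = 3 + 6 - 10 = -1$. Since $\overline{\BS_2(C)}$ is an irreducible effective divisor by Proposition \ref{proposition-bilinearsecantdim}, this negativity forces \emph{every} irreducible curve of class $\gamma$ to be contained in $\overline{\BS_2(C)}$: a curve not lying in the support would meet the divisor properly, hence nonnegatively. Thus the whole content of (e) reduces to producing enough irreducible curves of class $\gamma$.

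The existence and covering statement is the step I expect to be the main obstacle. A standard parameter count shows that irreducible curves of bidegree $(3,3)$ through $5$ general points move in a family of the expected dimension $3$; by the previous paragraph they all lie in the $4$-dimensional variety $\overline{\BS_2(C)}$, so it remains only to show this family dominates $\overline{\BS_2(C)}$. For this I would use the fibration $\pi_2\colon \overline{\BS_2(C)} \to \PP^3$, whose general fibre is a line in $\PP^2$ (consistent with $\overline{\BS_2(C)}$ having bidegree $(1,2)$). Given a general point $z = (z^1,z^2) \in \overline{\BS_2(C)}$, take a twisted cubic $B \subset \PP^3$ through the six points $p_1^2,\dots,p_5^2,z^2$ (which exists by a dimension count); a suitable section over $B$ of the ruled surface $\pi_2^{-1}(B)\cap \overline{\BS_2(C)}$ then gives an irreducible curve of class $\gamma$ through $p_1,\dots,p_5$ and $z$, establishing domination. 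The delicate point is verifying that a section of the correct $\pi_1$-degree passing through the six prescribed points on the ruled surface exists; this is the technical heart of the argument, and I would settle it either by the explicit determinantal model $\det M_2 = 0$ of Proposition \ref{proposition-bilinearsecantdim} or by a smoothing argument starting from the reducible configurations $C \cup L$, where $L = \ell_{w^2}\times\{w^2\}$ is the line fibre meeting $C$ at a point $w \in C$.

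Granting the covering family, the conclusion is routine. Write $D = m\,\overline{\BS_2(C)} + D'$, where $m$ is the multiplicity of containment of $\overline{\BS_2(C)}$ in $\Bs(D)$ and $D'$ is effective with $\overline{\BS_2(C)}$ not among its components. A general member $\gamma'$ of the covering family is not contained in the proper closed subset $\mathrm{Supp}(D') \cap \overline{\BS_2(C)}$ of $\overline{\BS_2(C)}$, so $\gamma'$ meets $D'$ properly and $D'\cdot\gamma' \geq 0$. Then $D\cdot\gamma = m\,(\overline{\BS_2(C)}\cdot\gamma) + D'\cdot\gamma' = -m + D'\cdot\gamma'$, whence $m = D'\cdot\gamma' + \bigl(\sum_i m_i - 3d_1 - 3d_2\bigr) \geq \sum_i m_i - 3d_1 - 3d_2$. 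Combined with the trivial bound $m \geq 0$, this yields the claimed multiplicity $\max\{0,\ \sum_i m_i - 3d_1 - 3d_2\}$.
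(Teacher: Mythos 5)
Your strategy for part (e) is the same as the paper's in outline: isolate the curve class $\gamma = 3l_1+3l_2-\sum_i e_i$, compute $\overline{\BS_2(C)}\cdot\gamma=-1$, show that irreducible curves of class $\gamma$ sweep out a dense subset of $\overline{\BS_2(C)}$, and finish by intersecting a general member of the covering family with $D = m\,\overline{\BS_2(C)}+D'$. The negativity computation and the final base-locus step are correct, and your observation that every irreducible curve of class $\gamma$ is automatically contained in $\overline{\BS_2(C)}$ is exactly the trick the paper uses. (Parts (a)--(d), which you do not discuss, are one-line citations of Propositions \ref{BLI pulledback hyperplanes}, \ref{BLI for bilinear spans}, \ref{prop-bli-pulledbackcone} and \ref{prop-bli-bisecant}, so the real content is indeed concentrated in (e).)

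However, there is a genuine gap at exactly the point you flag as ``the technical heart'': you never prove that irreducible curves of class $\gamma$ exist and dominate $\overline{\BS_2(C)}$. Your plan --- fix a general $z\in\overline{\BS_2(C)}$, take the twisted cubic $B\subset\PP^3$ through $p_1^2,\dots,p_5^2,z^2$, and produce a section of the ruled surface $\pi_2^{-1}(B)\cap\overline{\BS_2(C)}$ of $\pi_1$-degree $3$ through six prescribed points --- is deferred to ``either the determinantal model or a smoothing argument,'' neither of which is carried out; and your expected-dimension count is only heuristic (it bounds the number of conditions but establishes neither existence nor irreducibility of the curves). The paper's proof shows how to sidestep precisely this difficulty: it never tries to hit a prescribed sixth point of $\overline{\BS_2(C)}$. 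Instead, for each twisted cubic $B$ through only the five projected points (a two-parameter family sweeping out $\PP^3$), it works inside $\pi_2^{-1}(B)\cong\PP^2\times\PP^1$ and parametrises curves of class $\gamma$ there as graphs of maps $\PP^1\to\PP^2$ given by three binary cubics: $12$ coefficients, minus one for scaling, minus two linear conditions per point, leaves a one-parameter family through $p_1,\dots,p_5$ for each $B$, and these curves are automatically smooth, irreducible and rational because they are graphs. Letting $B$ vary, the curves sweep out a divisor of $X$, and only then is the negativity $\overline{\BS_2(C)}\cdot\gamma=-1$ invoked, a posteriori, to conclude that this divisor is contained in, hence equal to, the irreducible divisor $\overline{\BS_2(C)}$. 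Thus domination comes for free from irreducibility of $\overline{\BS_2(C)}$ plus a dimension count of the swept set, with no incidence condition at a sixth point ever required. To complete your argument you would either have to carry out the ruled-surface/smoothing analysis in full, or replace it with this graph-counting argument.
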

\begin{proof}
Statement (a) follow from Proposition \ref{BLI pulledback hyperplanes} with $n=2$. For (b), note that the divisor in question is the strict transform of $\BL(p_i,p_j,p_k)$, so the claim follows from Proposition \ref{BLI for bilinear spans} with $n=2$ and $k=3$.  Statement (c) follows from Proposition \ref{prop-bli-pulledbackcone} with $n=2$, $t=1$, and $I$ equal to the empty set; (d) follows from Proposition \ref{prop-bli-bisecant} with $n=2$ and $k=2$.

For (e), we claim that irreducible curves of class $\gamma=3l_1+3l_2-\sum_i e_i$ sweep out a Zariski-dense open subset of $\overline{\BS_2(C)}$. Therefore any effective class $D$ such that $D \cdot \gamma <0$ must contain $\overline{\BS_2(C)}$ in its base locus; applying this repeatedly we get the desired statement. 

To prove the claim, in $\PP^3$ choose any smooth cubic passing through the images of the 5 points $p_1,\ldots,p_5$; such cubics sweep out a dense subset of $\PP^3$. The preimage of any such  cubic is isomorphic to $\PP^1 \times \PP^2$. So we are reduced to showing that for any 5 points in $\PP^1 \times \PP^2$ there is a 1-parameter family of curves of bidegree $(1,3)$ passing through all 5. Any such curve is the graph of a map from $\PP^1$ to $\PP^2$ given by 3 binary cubic forms. Such a set of 3 forms has 12 coefficients. Passing through a point imposes 2 conditions on these coefficients, therefore the family of curves passing through all 5 points has dimension $(12-1)-5\cdot2 = 1$. Since any such curve is a graph, it is smooth irreducible rational. The strict transforms of these curves sweep out a divisor in $X$. Since $\overline{\BS_2(C)} \cdot \gamma = -1$ any such curve is contained in $\overline{\BS_2(C)}$, so this divisor has to equal $\overline{\BS_2(C)}$.
\end{proof}
Finally, we also have an effectivity inequality that follows from Proposition \ref{prop-bli-bilinearjoins}:
\begin{proposition} \label{prop-bilinearjoins-x235}
On $X^{2,3}_5$ any effective divisor must satisfy
\begin{align*}
4d_1+5d_2 - m_i-m_j-\sum_{k=1}^5 m_k &\geq 0
\end{align*}
for each pair $\{i,j\} \subset \{1,\ldots,5\}$.
\end{proposition}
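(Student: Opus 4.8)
The asserted inequality is precisely the effectivity constraint imposed by Proposition \ref{prop-bli-bilinearjoins}, so the plan is to apply that proposition and then check that the bilinear join occurring in it exhausts the whole variety. On $X^{2,3}_5 = X^{n,n+1}_{n+3}$ with $n=2$, I would take the bilinear secant of index $1$, so that $\BS_1(C)=C$ is the distinguished rational curve, together with the bilinear span $\BL(p_i,p_j)$ of the two chosen points (i.e.\ $l=2$). With these choices the coefficients of $d_1$ and $d_2$ in the formula for $\kappa_{\BJ(\BS_k(C),\BL(p_{i_1},\dots,p_{i_l}))}(D)$ become $4$ and $5$ respectively, giving
\[
\kappa_{\BJ(C,\BL(p_i,p_j))}(D)=\max\Bigl\{0,\ \sum_{k=1}^{5}m_k+m_i+m_j-4d_1-5d_2\Bigr\}.
\]

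By Proposition \ref{prop-bli-bilinearjoins}, any effective $D$ then contains $\BJ(C,\BL(p_i,p_j))$ in its base locus to order at least this number. The one geometric input required is that this join equals all of $\PP^2\times\PP^3$ (equivalently, its strict transform is dense in $X^{2,3}_5$). Granting it, the rest is formal: for a nonzero effective class the base locus of $|D|$ is contained in each of its members and is therefore a proper closed subset, so it cannot contain the dense join. Hence the order of containment is $0$, which forces $\sum_{k}m_k+m_i+m_j-4d_1-5d_2\le 0$; this is exactly the claimed inequality.

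The heart of the argument is thus the dimension statement $\dim\BJ(C,\BL(p_i,p_j))=5$. The bound $\le 5$ is automatic, since the join is the image of the incidence family whose base $C\times\BL(p_i,p_j)$ has dimension $1+2$ and whose fibre over $(z_1,z_2)$ is the bilinear span $\BL(z_1,z_2)$ of dimension $2$. For the reverse inequality I would prove that the induced map to $\PP^2\times\PP^3$ is dominant, using that $\pi_1|_C$ and $\pi_2|_C$ identify $C\cong\PP^1$ with the conic in $\PP^2$ and the twisted cubic in $\PP^3$ through the projected points, while $\BL(p_i,p_j)$ is the product of the line through $p_i^1,p_j^1$ and the line through $p_i^2,p_j^2$. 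Fixing a general target $(u,v)$, the requirement that $u$ lie on the line $\overline{z_1^1 z_2^1}$ determines $z_2^1$ uniquely from any choice of $z_1\in C$, leaving a one-parameter family indexed by $z_1$; the requirement that $v$ lie on $\overline{z_1^2 z_2^2}$ then imposes the single coplanarity condition that $\overline{z_1^2 v}$ meet the line through $p_i^2,p_j^2$, cutting this family down to finitely many points. This exhibits the map as generically finite, hence dominant, so the join is $5$-dimensional.

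I expect this dominance (coplanarity) count to be the only genuinely geometric point; the invocation of Proposition \ref{prop-bli-bilinearjoins} and the passage to the effectivity inequality are bookkeeping. As a consistency check, the inequality is equivalent to the statement that the class $4l_1+5l_2-2e_i-2e_j-\sum_{k\neq i,j}e_k$ pairs non-negatively with every effective divisor, matching the expectation that it is a moving class swept out by the now-dense join.
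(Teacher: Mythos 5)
Your proof is correct and takes essentially the same route as the paper: the paper's proof likewise applies Proposition \ref{prop-bli-bilinearjoins} with $k=1$ and $l=n=2$, notes that $\BJ(C,\BL(p_i,p_j))=X^{2,3}_5$, and concludes that the multiplicity of containment must be zero, which yields exactly the inequality $4d_1+5d_2-m_i-m_j-\sum_k m_k\ge 0$. The only difference is that the paper asserts the equality $\BJ(C,\BL(p_i,p_j))=X$ without argument, whereas you prove it via the dominance and coplanarity count (the plane $\langle v, L^2\rangle$ meeting the twisted cubic in a third point); that verification is sound and supplements, rather than departs from, the paper's argument.
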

\begin{proof}
Put $k=1$ and $l=n=2$ in Proposition \ref{prop-bli-bilinearjoins}: in this case $\BJ(C,\BL(p_i,p_j))=X$ so for any effective divisor the multiplicity of containment must be zero. The Proposition therefore implies that
\begin{align*}
    (nk+k+l-1)d_1+(nk+2k+l-1)d_2-k\sum_{i=1}^{n+3}m_i-\sum_{j=1}^l m_{i_j} &\geq 0.
\end{align*}
Putting $k=1$, $l=n=2$ and writing $m_i$ and $m_j$ instead of $m_{i_1}$ and $m_{i_2}$, this gives exactly the inequality claimed.
\end{proof}

\begin{theorem}\label{theorem-effcone-x235}
The effective cone $\Eff(X^{2,3}_5)$ is generated by the divisor classes below.
\rowcolors{2}{gray!15}{white}
\emph{
\begin{longtable}{ccccccc}
$H_1$ & $H_2$ & $E_1$ & $E_2$ & $E_3$ & $E_4$ & $E_5$\\
\hline\hline
0 & 0 & 1 & 0 & 0 & 0 & 0\\
1 & 0 & -1 & -1 & 0 & 0 & 0\\
0 & 1 & -1 & -1 & -1 & 0 & 0\\
1 & 1 & -2 & -1 & -1 & -1 & -1\\
1 & 2 & -2 & -2 & -2 & -2 & -2\\
2 & 0 & -1 & -1 & -1 & -1 & -1
\end{longtable}}
\end{theorem}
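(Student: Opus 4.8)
The plan is to apply the \emph{cone method} of \cite{GPP21}: exhibit a finite set of effective generators, check they all lie in $\Eff(X)$, and then show the reverse inclusion by realising every facet of the candidate cone by a moving curve. Write $\mathcal{C}$ for the cone spanned by the six classes in the statement together with their images under the $S_5$-action permuting $p_1,\dots,p_5$. Each generator is visibly effective: the $E_i$ are exceptional; $H_1-E_i-E_j$ is the pullback of the line through $p_i^1,p_j^1$; $H_2-E_i-E_j-E_k=\BL(p_i,p_j,p_k)$ is a bilinear span; $Q=2H_1-\sum_i E_i$ is the pullback of the conic through the five points $p_i^1$; $\overline{\BS_2(C)}=H_1+2H_2-2\sum_i E_i$ is the bilinear secant (Proposition \ref{prop-secantmult}); and the class $H_1+H_2-2E_1-E_2-\cdots-E_5$ is represented by the pencil of $(1,1)$-divisors singular at $p_1$ and passing through the remaining points. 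Hence $\mathcal{C}\subseteq\Eff(X)$.

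For the reverse inclusion I would first collect the moving-curve inequalities that every effective class must satisfy. The covering lines $l_1,l_2$ give $d_1,d_2\geq0$; the covering curves of bidegree $(1,1)$ through a point give $m_i\leq d_1+d_2$ (Lemma \ref{lemma-effectivityinequality}(a)); the curves of bidegree $(2,3)$ through at most four points, which cover $X$ by Corollary \ref{corollary-ratcurvefewerpoints}(b), give $\sum_{i\in I}m_i\leq 2d_1+3d_2$ for $|I|\leq4$ (Lemma \ref{lemma-effectivityinequality}(b)); and the covering bilinear joins $\BJ(C,\BL(p_i,p_j))=X$ give $m_i+m_j+\sum_{k}m_k\leq 4d_1+5d_2$ (Proposition \ref{prop-bilinearjoins-x235}). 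Let $\mathcal{P}$ be the cone these inequalities cut out; then $\Eff(X)\subseteq\mathcal{P}$.

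It then remains to prove $\mathcal{P}\subseteq\mathcal{C}$, which together with $\mathcal{C}\subseteq\Eff(X)\subseteq\mathcal{P}$ forces equality throughout. Concretely I would check that the dual of $\mathcal{C}$ is generated exactly by the curve classes $l_1$, $l_2$, $l_1+l_2-e_i$, $2l_1+3l_2-\sum_{i\in I}e_i$ with $|I|\in\{3,4\}$, and the bilinear-join class, i.e. that these moving curves account for \emph{all} facets of $\mathcal{C}$. Equivalently, one shows every integral class satisfying the inequalities above is a non-negative combination of the six generators, for instance by an explicit decomposition extending the ``table'' argument used in Theorem \ref{eff-cone-n+2} and in \cite[Lemma 4.24]{CT06}, now modified to absorb the contributions of $Q$ and $\overline{\BS_2(C)}$.

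The main obstacle is this last step: confirming that the list of six generators is complete, so that $\mathcal{P}$ has no extreme ray outside $\mathcal{C}$. The delicate point is that, unlike the case of $\PP^n$ blown up at $n+3$ points, the effective cone here acquires the unexpected generator $H_1+H_2-2E_1-E_2-\cdots-E_5$; any proof must produce this $(1,1)$-pencil, certify its extremality, and show that together with the secant class $\overline{\BS_2(C)}$ it exhausts the rays cut out by the moving inequalities. In practice this is a finite convex-geometry computation in $N^1(X)\cong\RR^7$ that I would carry out using the $S_5$-symmetry to reduce the number of orbits of facets, verifying by Normaliz \cite{normaliz} that the extreme rays of $\mathcal{P}$ are precisely the six orbits listed. (Note that the base locus lemmas of Proposition \ref{prop-bli-x235} are not required for this inclusion, since the inequalities they furnish are violated by the generators themselves; they instead enter the companion computation of $\Mov(X)$ in Corollary \ref{corollary-X235movable}.)
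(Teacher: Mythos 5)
Your strategy breaks down at the final inclusion $\mathcal{P}\subseteq\mathcal{C}$: that inclusion is false, so the Normaliz check you propose would not confirm it. Concretely, consider the class $D=7H_2-5\sum_{i=1}^5E_i$. It satisfies every inequality on your list: $d_1=0\ge0$, $d_2=7\ge0$, $m_i=5\le d_1+d_2=7$, $\sum_{i\in I}m_i\le 20\le 2d_1+3d_2=21$ for $|I|\le4$, and $m_i+m_j+\sum_k m_k=35\le 4d_1+5d_2=35$. So $D\in\mathcal{P}$, and in fact $D$ spans an extreme ray of $\mathcal{P}$ (it is cut out by $d_1=0$ together with the ten join inequalities, which force all $m_i$ equal and $5d_2=7m$). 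But $D\notin\mathcal{C}$: a non-negative combination with $d_1=0$ can only use the generators $E_i$ and $H_2-E_i-E_j-E_k$, and then the $H_2$-coefficients sum to $7$ while each point must be covered at least $5$ times, giving $21\ge 25$, a contradiction. Nor is $D$ effective: a bidegree-$(0,7)$ divisor with quintuple points at the $p_i$ is the pullback of a degree-$7$ surface in $\PP^3$ with five general quintuple points, which would meet the moving family of twisted cubics through the five points in degree $21-25<0$. The underlying reason is that your list of moving curves misses facets of $\Eff(X^{2,3}_5)$: for instance $\gamma=4l_1+3l_2-\sum_{i=1}^5 e_i$ is non-negative on all six orbits of generators and vanishes exactly on the ten classes $H_2-E_i-E_j-E_k$ together with $\overline{\BS_2(C)}$, which span a six-dimensional face; so $\gamma$ spans an extreme ray of the dual cone that appears nowhere in your list, and you have produced no covering family of curves realising it.

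This is precisely why the paper's Cone Method does not proceed by dualising against moving curves, and why your closing parenthetical is a misreading. The paper imposes the base locus inequalities of Proposition \ref{prop-bli-x235} --- which are valid not for all effective classes but for every \emph{irreducible} effective divisor other than $Q$, $\overline{\BS_2(C)}$, the $E_i$ and the pulled-back linear classes --- computes that much smaller cone with Normaliz (it turns out to be the movable cone of Corollary \ref{corollary-X235movable}), and only then takes the cone generated by its generators together with the finitely many excluded fixed divisors. That the fixed divisors violate their own base locus inequalities is not an obstruction but the mechanism: it is how the extremality of $Q$ and $\overline{\BS_2(C)}$ is detected without having to identify in advance the moving curves (such as the bidegree-$(4,3)$ class above) supporting the facets adjacent to them. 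To repair your argument you would either have to enlarge your list of covering families until every facet of $\mathcal{C}$ is accounted for --- constructing curves you have not produced --- or revert to the paper's two-step sandwich using base locus inequalities.
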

Let us explain the conventions for reading this and subsequent tables. First, a row of the form $(d_1 \, d_2 \,  m_1 \cdots m_5)$ represents the divisor class $d_1H_1+d_2H_2+\sum_{i=1}^5 m_i E_i$: for example, the second row represents $H_1-E_1-E_2$. Second, the table should be read {\bf up to permutation}: for any class corresponding to a row $(d_1 \, d_2 \, m_1 \cdots m_5)$ of the table, all classes obtained by permutations of the $m_i$ are also included in the list of generators. So for example the third row of the table says that the list of generators  includes all classes of the form $H_2-\sum_I E_i$ where $I$ is a 3-element subset of $\{1,\ldots,5\}$. 

\begin{proof} 
We follow the ``Cone Method", described in detail in \cite{GPP21}. To begin, consider the following collections of inequalities:
\begin{itemize}
    \item The ``pullbacks" of the inequalities defining $\Eff(X^{2,3}_4)$ which are listed in Theorem \ref{eff-cone-n+2};
    \item The base locus inequalities for exceptional divisors from Lemma \ref{bli-exceptional};
    \item The base locus inequalities listed in Proposition \ref{prop-bli-x235};
    \item The effectivity inequality from Proposition \ref{prop-bilinearjoins-x235}.
\end{itemize}
By construction, the class of every irreducible effective divisor on $X$ other than $Q$, $\overline{BS_2(C)}$, the $E_i$, and pullbacks of extremal linear classes from the factors must satisfy all these inequalities. We use Normaliz to compute the generators of the cone cut out by these inequalities; these computations are contained in the files {\tt X235-eff-ineqs.*}.

Next we take the list of generators from the previous step, add to it the classes of $Q$ and $\overline{\BS_2(C)}$, exceptional divisors $E_i$ and pullbacks of extremal linear classes, and compute the new cone generated by all these classes. By construction, this cone will contain the effective cone, so if it is generated by effective classes, it must equal the effective cone. 

The files {\tt X235-eff-gens.*} compute the generators of the new cone: the output is the list of classes displayed in the statement of the theorem. All these are classes of effective divisors, as we now explain. The first row and its permutations give the classes of the exceptional divisors, while the second and third rows are the classes of pullbacks of linear spaces in the two factors. The last two rows are the classes of  $Q$ and $\overline{\BS_2(C)}$ respectively. For the fourth row, observe that $H_1+H_2-\sum_iE_i -E_j$ is the class of the strict transform of a hypersurface of bidegree $(1,1)$ passing through all the points $p_i$ and singular at the point $p_j$. A dimension count shows that the linear system of such hypersurfaces has positive expected dimension, so this class is effective.
\end{proof}

\begin{remark}\label{remark-extremalrays-X235}
It is interesting to observe that all extremal rays, except for those of degree $(1,1)$, are spanned by fixed divisors: exceptional divisors, pull-backs of fixed divisors from either factors, and  the proper transform of the bilinear secant variety $\BS_2(C)$.

The class $H_1+H_2-\sum_{i=1}^5E_j$ is a pencil whose elements can be described as \emph{bilinear cones} over the divisors of bidegree $(1,1)$ through four points of $\PP^1\times\PP^2$: see \cite[Theorem 3.6]{GPP21}.

\end{remark}
\begin{corollary}\label{corollary-X235movable}
    The movable cone $\Mov(X^{2,3}_5)$ is generated by the divisor classes below.
\rowcolors{2}{gray!15}{white}
\emph{
\begin{longtable}{ccccccc}
$H_1$ & $H_2$ & $E_1$ & $E_2$ & $E_3$ & $E_4$ & $E_5$\\
\hline\hline
1 & 0 & 0 & 0 & 0 & 0 & 0\\
1 & 0 & -1 & 0 & 0 & 0 & 0\\
2 & 0 & -1 & -1 & -1 & -1 & 0\\
2 & 0 & -1 & -1 & -1 & 0 & 0\\
3 & 0 & -2 & -1 & -1 & -1 & -1\\
0 & 1 & 0 & 0 & 0 & 0 & 0\\
0 & 1 & -1 & 0 & 0 & 0 & 0\\
0 & 1 & -1 & -1 & 0 & 0 & 0\\
0 & 2 & -2 & -1 & -1 & -1 & -1\\
0 & 2 & -2 & -1 & -1 & -1 & 0\\
0 & 3 & -2 & -2 & -2 & -2 & -1\\
0 & 3 & -2 & -2 & -2 & -2 & 0\\
1 & 1 & -2 & -1 & -1 & -1 & -1\\
1 & 1 & -2 & -1 & -1 & -1 & 0\\
1 & 1 & -2 & -1 & -1 & 0 & 0\\
1 & 2 & -2 & -2 & -2 & -2 & -1\\
1 & 2 & -2 & -2 & -2 & -2 & 0
\end{longtable}}
\end{corollary}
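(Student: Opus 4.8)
The plan is to determine $\Mov(X^{2,3}_5)$ by the same Cone Method used for $\Eff(X^{2,3}_5)$, exploiting that a class is movable exactly when it is effective and no fixed prime divisor appears in its stable base locus. By Theorem \ref{theorem-effcone-x235} and Remark \ref{remark-extremalrays-X235} the irreducible rigid divisors spanning extremal rays of $\Eff(X^{2,3}_5)$ are the exceptional divisors $E_i$, the pulled-back lines $H_1-E_i-E_j$, the pulled-back planes $H_2-E_i-E_j-E_k$, the conic $Q=2H_1-\sum_iE_i$, and the bilinear secant $\overline{\BS_2(C)}=H_1+2H_2-2\sum_iE_i$, the only non-rigid extremal rays being the $(1,1)$ pencils. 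I would prove the statement by the two inclusions $\Mov(X^{2,3}_5)\subseteq M'$ and $M'\subseteq\Mov(X^{2,3}_5)$, where $M'$ is the cone obtained by intersecting $\Eff(X^{2,3}_5)$ with the half-spaces that force each of these fixed divisors to have base-locus multiplicity zero.

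For the inclusion $\Mov(X^{2,3}_5)\subseteq M'$, the key point is that every defining inequality of $M'$ is necessary for movability. The effectivity inequalities hold because $\Mov(X^{2,3}_5)\subseteq\Eff(X^{2,3}_5)$. For the rest, Lemma \ref{bli-exceptional} and Proposition \ref{prop-bli-x235} provide lower bounds $\kappa_F(D)$ for the multiplicity of each fixed divisor $F$ in $\Bs(D)$; if $D$ is a movable divisor then $F\notin\Bs(D)$, forcing the linear form inside $\kappa_F(D)$ to be nonpositive. This yields the inequalities $m_i\ge0$, $m_i+m_j\le d_1+2d_2$, $m_i+m_j+m_k\le 2d_1+2d_2$, $\sum_im_i\le 2d_1+5d_2$, together with the two secant inequalities $2\sum_im_i\le 5d_1+7d_2$ and $\sum_im_i\le 3d_1+3d_2$ coming from parts (d) and (e). As these are closed conditions they pass to the whole movable cone, so $\Mov(X^{2,3}_5)\subseteq M'$. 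Feeding these half-spaces and the inequalities defining $\Eff(X^{2,3}_5)$ into Normaliz returns the extremal generators of $M'$, which are precisely the classes listed in the statement.

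For the reverse inclusion I would use the convexity of $\Mov(X^{2,3}_5)$: it suffices to show that each generator in the table (up to permutation) is a movable class, for then the convex cone they span lies in $\Mov(X^{2,3}_5)$. Many generators are immediate: those with $d_1=0$ or $d_2=0$ are pulled back from the del Pezzo surface $X^{2,0}_5$ or from the blow-up $X^{0,3}_5$ of $\PP^3$, where the corresponding classes are movable (indeed nef for the del Pezzo generators), and a pullback of a class with no fixed component again has no fixed component. For the genuinely mixed generators, such as $H_1+H_2-2E_1-E_2-E_3-E_4-E_5$ and $H_1+2H_2-2E_1-2E_2-2E_3-2E_4-E_5$, I would follow the device indicated after Theorem \ref{mov-cone-n+2} and write the class as a sum of effective classes in at least two ways with no common divisorial component; for instance the latter decomposes both as $(H_2-E_1-E_2-E_3)+(H_1+H_2-E_1-E_2-E_3-2E_4-E_5)$ and as $(H_2-E_1-E_2-E_4)+(H_1+H_2-E_1-E_2-2E_3-E_4-E_5)$, which share no fixed component, so the stable base locus has codimension at least two. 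Carrying this out for every generator gives $M'\subseteq\Mov(X^{2,3}_5)$, and hence equality.

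The main obstacle is exactly this last, case-by-case verification that each generator is movable: it is here that the base locus bounds of Proposition \ref{prop-bli-x235} are effectively used as sharp estimates. A reassuring feature is that the argument is self-checking: had the list of fixed divisors been incomplete, some generator of $M'$ would genuinely contain a fixed divisor and the decomposition step would fail, so the success of the verification simultaneously confirms that no fixed divisor has been overlooked.
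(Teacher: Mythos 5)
Your overall strategy coincides with the paper's: the inequalities force a cone containing $\Mov(X^{2,3}_5)$, Normaliz identifies its extremal generators as the listed classes, and one then verifies that each generator is movable --- pullbacks from $X^{2,0}_5$ and $X^{0,3}_5$ for the twelve classes with $d_1=0$ or $d_2=0$, and explicit arguments for the five mixed ones. Your sample decomposition of $H_1+2H_2-2E_1-2E_2-2E_3-2E_4-E_5$ is arithmetically correct and is exactly the kind of argument the paper uses for the classes in the last four rows of the table.

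However, there is a genuine gap in your treatment of the class $H_1+H_2-2E_1-E_2-E_3-E_4-E_5$, which you explicitly list among the mixed generators to be handled by the ``two decompositions with no common fixed component'' device. This class is a primitive generator of an extremal ray of $\Eff(X^{2,3}_5)$ (it is the fourth row of the table in Theorem \ref{theorem-effcone-x235}; see Remark \ref{remark-extremalrays-X235}). Extremality forces any expression of it as a sum of effective classes to have all summands proportional to the class itself, and primitivity then rules out every nontrivial integral decomposition: the device you propose cannot even be started for this class, let alone carried out in two independent ways. The paper's proof handles it by a different mechanism: the class is represented by a positive-dimensional linear system (a pencil of $(1,1)$-divisors, as shown in the proof of Theorem \ref{theorem-effcone-x235}), and \emph{because} it is a primitive extremal generator, every member of that pencil is irreducible; a positive-dimensional system all of whose members are irreducible can have no fixed component, so the class is movable. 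Note that this also undercuts your closing ``self-checking'' claim: the failure of the decomposition step here does not signal an overlooked fixed divisor --- the class is extremal yet movable --- so one cannot read the success or failure of decompositions as a certificate that the list of fixed divisors is complete.
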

\begin{proof}
The listed classes are exactly the generators of the cone computed by {\tt X235-eff-ineqs.*}. By construction, this cone contains the movable cone $\Mov(X^{2,3}_5)$. If all these classes are movable, then the cone they span must equal $\Mov(X^{2,3}_5)$.

We see that all the listed classes are movable as follows:
\begin{itemize}
    \item All but the last 5 lines are pullbacks of movable classes from $X^{2,0}_5$ or $X^{0,3}_5$ so these classes are movable.
    \item For line $-5$, we have already seen in the proof of Theorem \ref{theorem-effcone-x235} that the class $H_1+H_2-\sum_i E_i -E_j$ corresponds to a positive-dimensional linear system. Moreover, this class is a primitive generator of an extremal ray of the effective cone, so every divisor in the linear system is irreducible. So this linear system has no fixed component, hence it is movable.
    \item The remaining classes all have two decompositions into sums of effective classes that do not share any fixed component of codimension 1:
    \begin{align*}
        H_1+H_2-2E_1-\cdots-E_4 &= \left(H_1-E_1-E_2 \right) + \left(H_2-E_1-E_3-E_4 \right)\\
                                &= \left(H_1-E_1-E_3 \right) + \left(H_2-E_1-E_2-E_4 \right)\\
        H_1+H_2-2E_1-E_2-E_3    &= \left(H_1-E_1-E_2 \right) + \left(H_2-E_1-E_3\right)\\
                                &= \left(H_1-E_1-E_2 \right) + \left(H_2-E_1-E_3\right)\\
                                &= \left(H_1-E_1-E_3 \right) + \left(H_2-E_1-E_2\right)\\
 H_1+2H_2-2(E_1+\cdots E_4)-E_5 &= \left(H_1+2H_2-2(E_1+\cdots+E_5)\right)+E_5\\
                                &= \left(H_1+H_2-2E_1-E_2-\cdots-E_5 \right)\\ & \quad +\left(H_2-E_2-E_3-E_4 \right)\\
 H_1+2H_2-2(E_1+\cdots E_4)     &= \left(H_1+2H_2-2(E_1+\cdots E_4)-E_5 \right) + E_5\\
                                 &=\left(H_1-E_1-E_2\right)+\left( H_2-E_1-E_3-E_4\right)\\ & \quad +\left( H_2-E_2-E_3-E_4\right)                             
    \end{align*}
    So the base locus of each of these classes has codimension at least 2, as required. 
\end{itemize}
   \end{proof}

\subsection{The log Fano property}
As in the case of $\ourproductblowuptwo$ discussed in Section \ref{section-Xn,n,n+2}, we now show that for $\blowup{2}{3}{5}$ the finiteness of the effective cone is explained by the log Fano property. 
\begin{theorem}\label{theorem-x235-logfano}
  The variety $\blowup{2}{3}{5}$ is log Fano.
\end{theorem}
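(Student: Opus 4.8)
The plan is to follow the strategy of Theorem \ref{theoremXn,n+1,n+2-logFanoexplicit}: exhibit an explicit effective $\QQ$-divisor $\Delta$ and verify that $-K_X-\Delta$ is ample and that the pair $(X,\Delta)$ is klt, where $X=\blowup{2}{3}{5}$. Since $X$ is the blowup of the $5$-fold $\PP^2\times\PP^3$ at $5$ points, we have $-K_X=3H_1+4H_2-4\sum_{i=1}^5 E_i$. As before, I would look for $\Delta$ as a positive rational combination of divisors pulled back from the two factors, $\Delta=\epsilon(D_1+D_2)$ with $D_1=d_1H_1-m_1\sum_i E_i$ and $D_2=d_2H_2-m_2\sum_i E_i$, the admissible parameters being found by computer experiment for this one small case. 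Ampleness of $-K_X-\Delta$ is checked directly against the generators of the Mori cone: by Proposition \ref{proposition-nefcone} (applicable since $n=2$, $m=3$, $s=5\le n+m$) these are the classes $e_i$, $l_1-e_i$ and $l_2-e_i$, so ampleness reduces to the three linear conditions $\epsilon(m_1+m_2)<4$, $\epsilon(m_1+m_2-d_1)>1$ and $m_1+m_2>d_2$.

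The new feature compared with the $n+2$ case is that a pure pullback boundary \emph{cannot} simultaneously satisfy these ampleness conditions and keep the pair klt while avoiding the distinguished curve $C$. Indeed, keeping the conic $Q$ and the bilinear secant $\overline{\BS_2(C)}$ out of the base loci of $D_1$ and $D_2$ would force, via Proposition \ref{prop-bli-x235}(c),(e), the effectivity-type bounds $5m_1\le 2d_1$ and $5m_2\le 3d_2$, giving $m_1+m_2\le \tfrac25 d_1+\tfrac35 d_2$; but the ampleness conditions require both $m_1+m_2>d_1$ and $m_1+m_2>d_2$, and these three inequalities are jointly contradictory. Hence the boundary must be chosen so that $C$, together with the fixed divisors $Q$ and $\overline{\BS_2(C)}$, genuinely enters the picture, and this is exactly where the bilinear secant construction is used. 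A log resolution $\phi\colon\widetilde X\to X$ is then built by blowing up, in order of increasing dimension, the fibres over linear spans of the (projected) points and their pairwise intersections (as in the proof of Theorem \ref{theoremXn,n+1,n+2-logFanoexplicit}), then the distinguished rational curve $C$, and finally the strict transform of $\overline{\BS_2(C)}$. One checks that this family of centres is closed under intersection, so that strict transforms of equidimensional centres stay disjoint at each stage and the boundary $\widetilde{D_1}\cup\widetilde{D_2}\cup\overline{\BS_2(C)}\cup E$ is simple normal crossing; smoothness of the strict transforms follows as in Proposition \ref{basepoint-freeness}.

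It then remains to compute discrepancies and verify each exceeds $-1$. For a centre $Z$ of codimension $c_Z$ the discrepancy of the corresponding exceptional divisor is $(c_Z-1)-\operatorname{mult}_Z(\Delta)$, and the multiplicities $\operatorname{mult}_Z(\Delta)=\epsilon(\kappa_Z(D_1)+\kappa_Z(D_2))$ are read off from the base locus lemmas of Proposition \ref{prop-bli-x235} and Lemma \ref{recipe-BLI-bilinear joins}. The fibre centres $\Pi^1_{\{i\}}$ (codimension $2$), $\Pi^2_{\{i\}}$ (codimension $3$) and the bilinear spans $\BL(p_i,p_j)$ are handled exactly as in the $n+2$ case. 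The essential new computation is along the distinguished curve $C$ (codimension $4$): here $\operatorname{mult}_C(\Delta)$ is controlled using the $k=1$ instance of Proposition \ref{prop-bli-bisecant} together with the fact, proved in Proposition \ref{prop-secantmult} for $n=2$, $k=2$, that $\overline{\BS_2(C)}$ has multiplicity $2$ along $C$; one must check that $3-\operatorname{mult}_C(\Delta)>-1$ and that the coefficient of the divisorial centre $\overline{\BS_2(C)}$ in $\Delta$ stays strictly below $1$.

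I expect the main obstacle to be precisely the analysis near the bilinear secant divisor $\overline{\BS_2(C)}$ and the curve $C$ it contains. By Proposition \ref{proposition-bilinearsecantdim} the divisor $\overline{\BS_2(C)}$ is the determinantal hypersurface cut out by the $3\times 3$ minors of the square matrix $M_2$, and its singular locus (where the rank drops to one) is exactly $C$; so the order of the blowups must be arranged so that, after blowing up $C$, the strict transform of $\overline{\BS_2(C)}$ becomes smooth and crosses the other exceptional divisors transversally. The delicate point is that the multiplicity bookkeeping feeding into the discrepancy along $E_C$ and into the coefficient of $\overline{\BS_2(C)}$ must be carried out simultaneously for the two competing constraints coming from the two factors. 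Once a single explicit choice of $(\epsilon,d_1,d_2,m_1,m_2)$ is exhibited satisfying all the resulting inequalities, the theorem follows.
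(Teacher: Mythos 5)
Your overall strategy coincides with the paper's: exhibit an explicit klt boundary, check ampleness of $-K_X-\Delta$ against the Mori cone generators of Proposition \ref{proposition-nefcone}, build a log resolution by blowing up fibres over points and lines, bilinear spans, and the curve $C$, and then verify discrepancies. Your opening computation is correct and even illuminating: a pure pullback boundary $\epsilon(D_1+D_2)$ cannot work, since keeping $Q$ and $\overline{\BS_2(C)}$ out of the base loci forces $m_1+m_2\le\tfrac25 d_1+\tfrac35 d_2$, which is incompatible with $m_1+m_2>d_1$ and $m_1+m_2>d_2$. But there is a genuine gap at the heart of the argument: you never exhibit the boundary $\Delta$, and its existence is precisely the nontrivial content of the theorem. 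The paper takes $\Delta=\tfrac12\,Q+\tfrac{10}{11}\,\overline{\BS_2(C)}+\tfrac1{10}\sum_{\{i,j,k\}}\Pi^2_{\{i,j,k\}}$, and the third component --- the sum of the ten plane pullbacks, of class $10H_2-6\sum_iE_i$ --- is indispensable: if one takes only $\Delta=\epsilon_1Q+\epsilon_2\overline{\BS_2(C)}$ with the klt-forced constraint $\epsilon_2<1$, then
\begin{align*}
(-K_X-\Delta)\cdot(l_1-e_i) \;=\; \epsilon_2-\epsilon_1-1\;<\;0,
\end{align*}
so $-K_X-\Delta$ is never ample. Thus, after ruling out pure pullbacks, you still need a third ingredient with large $H_2$-degree relative to its multiplicities; your proposal never identifies it, and your closing sentence even reverts to the parametrization $(\epsilon,d_1,d_2,m_1,m_2)$ of the pullback boundary you had just shown to be contradictory. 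Feasibility is genuinely delicate: with the paper's coefficients the $l_1-e_i$ condition $\epsilon_1+1<\epsilon_2+6\epsilon_3$ holds with a margin of only $\tfrac1{110}$, so this step cannot be waved through as parameter tuning.

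The second gap concerns the log resolution. You invoke Proposition \ref{basepoint-freeness} for smoothness of strict transforms, but that proposition applies to \emph{general} members of basepoint-free pulled-back systems; it says nothing about the rigid divisors $Q$ and $\overline{\BS_2(C)}$, which are the unique effective divisors in their classes and cannot be moved. This is why the paper must prove the snc property by hand: it uses the determinantal equations $M_1$, $M_2$, the automorphism group fixing $C$, and an explicit computation showing that $\PP T_p\Delta_1\cap\PP T_p\Delta_2$ is a corank-one quadric cone with vertex at the tangent direction $\PP T_pC$, so that the intersection becomes transverse exactly after projecting from that vertex, i.e.\ after blowing up $C$. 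You correctly flag this transversality as ``the main obstacle'' but leave it as an expectation rather than an argument. Finally, a small technical slip: the last step of your resolution, ``blowing up the strict transform of $\overline{\BS_2(C)}$'', is vacuous, since that strict transform is a divisor (once smooth, blowing it up is an isomorphism); the step that does the work is the blowup of $C$, and what must be proved is that afterwards the strict transform of $\overline{\BS_2(C)}$ is smooth and meets the other boundary components and exceptional divisors transversally.
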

As before, we will write $X$ instead of $X^{2,3}_5$ where needed to simplify notation.
\begin{proof}
Let $C$ be the distinguished rational curve through $p_1,\ldots,p_5$, whose existence was shown in Corollary \ref{corollary-ratcurvefewerpoints}. Define the following divisors on $X$:
  \begin{itemize}
  \item  $\Delta_1$ is the divisor $Q$ with class $2H_1-\sum_i E_i$.
  \item  $\Delta_2$ is the divisor $\overline{\BS_2(C)}$ with class $H_1+2H_2-2\sum_i E_i$.
  \item $\Delta_3$ is the sum $\sum_{i,j,k} \Pi^2_{\{i,j,k\}}$ where $\Pi^2_{\{i,j,k\}}$ is the strict transform on $X$ of the pullback of the plane in $\PP^3$ through the images of $p_i$, $p_j$, and $p_k$. The class of $\Delta_3$ equals $10H_2-6 \sum_i E_i$.
  \end{itemize}
  To exhibit a log Fano structure on $X$, we will consider a divisor of the form
  \begin{align*}
    \Delta &= \epsilon_1 \Delta_1 + \epsilon_2 \Delta_2 +\epsilon_3 \Delta_3.   \end{align*}
  We claim that for suitable choices of the coefficients $\epsilon_1, \epsilon_2, \epsilon_3$, the pair $(X,\Delta)$ is klt and $-K_X-\Delta$ is ample.

  First we consider the ample condition. By Proposition \ref{proposition-nefcone}, a divisor on $\blowup{2}{3}{5}$ is ample if it has positive intersection with the curve classes $e_i$ and $l_j-e_i$ for $i=1,\ldots,5$ and $j=1,2$. For $\Delta$ as above we have
    \begin{align*}
      -K_X - \Delta &= (3-2\epsilon_1 -\epsilon_2)H_1+(4-2\epsilon_2-10\epsilon_3)H_2 - \left( 4-\epsilon_1-2\epsilon_2-6\epsilon_3 \right) \sum_i E_i
    \end{align*}
    and therefore:
\begin{align*}
   (-K_X-\Delta)\cdot e_i>0 &\iff
 4-\epsilon_1-2\epsilon_2-6\epsilon_3>0,\\
 (-K_X-\Delta)\cdot (l_1-e_i)>0 &\iff
 4-\epsilon_1-2\epsilon_2-6\epsilon_3<3-2\epsilon_1-\epsilon_2,\\
 (-K_X-\Delta)\cdot (l_2-e_i)>0 &\iff
 4-\epsilon_1-2\epsilon_2-6\epsilon_3 <4-2\epsilon_2-10\epsilon_3.
\end{align*}   
These three conditions can be satisfied simultaneously, for example by taking
\begin{align*}
\left(\epsilon_1,\epsilon_2,\epsilon_3 \right) &=\left( \frac12, \, \frac{10}{11}, \, \frac{1}{10} \right).
\end{align*}
We will now prove that for these values of the $\epsilon_i$ the resulting pair $(X,\Delta)$ satisfies the klt condition. For this, we need to construct a log resolution of the pair, and show that the discrepancies computed on this resolution satisfy the necessary inequalities. 

Since $X$ is already smooth, in order to construct a log resolution of the pair $(X,\Delta)$ we need to blow up along the locus where $\operatorname{Supp}(\Delta)$ fails to be simple normal crossing. This consists of the following strata:
\begin{itemize}
    \item The strict transforms of the subsets $\Pi_{\{i\}}^2$ which are the fibres of the projections of points $p_i$ to $\PP^3$. These have codimension 3, but 7 components of $\operatorname{Supp}(\Delta)$ intersect along each $\Pi_{\{i\}}^2$: the divisor $\Delta_2$, as well as each of the 6 divisors $\Pi^2_{\{i,j,k\}}$ where $j$ and $k$ are distinct.
    \item The strict transforms of the subsets $\Pi_{\{i,j\}}^2$ which are the preimages of the lines spanned by the projections of two points $p_i$ and $p_j$ to $\PP^3$. These have codimension 2, but 3 components of $\operatorname{Supp}(\Delta)$ intersect along each one, namely the divisors $\Pi^2_{\{i,j,k\}}$ where $k$ is an element of $\{1,2,3,4,5\} \setminus \{i,j\}$.
    \item The strict transforms of the bilinear spans $\BL(p_i,p_j)$ of two points $p_i$ and $p_j$. This has codimension 3, but 4 components of $\operatorname{Supp}(\Delta)$ intersect along it, namely the divisors $\Pi^2_{\{i,j,k\}}$ where $k$ is an element of $\{1,2,3,4,5\} \setminus \{i,j\}$ as well as the divisor $\Delta_2$.
    \item Finally, the curve $C$ is exactly the singular locus of the divisor $\Delta_2$.
\end{itemize}
To obtain our log resolution we blow up these subvarieties in the following order:
\begin{itemize}
    \item the pairwise disjoint subvarieties $\Pi_{\{i\}}^2$ for $i \in \{1,2,3,4,5\}$;
    \item the strict transforms of the bilinear joins $\BL(p_i,p_j)$ for $\{i,j\} \subset \{1,2,3,4,5\}$, which are pairwise disjoint after the previous blowups;
    \item the strict transforms of the subvarieties $\Pi_{\{i,j\}}^2$ for $\{i,j\} \subset \{1,2,3,4,5\}$, also pairwise disjoint after the first set of blowups;
    \item the strict transform of the curve $C$. 
\end{itemize}
We write $\rho \colon \widetilde{X} \arrow X$ to denote this composition of blowups.

We need to verify that $\rho$ does indeed give a log resolution of the pair $(X,\Delta)$. Since the $\Pi^2_{\{i,j,k\}}$ are pullbacks of linear spaces in $\PP^3$ and we blow up their intersections, the strict transforms of these components are smooth and pairwise disjoint after blowing up. Moreover each $\Pi^2_{\{i,j,k\}}$ intersects $\Delta_1$ transversely on $X$ already, so the same remains true after blowing up. So it remains to consider the intersections of the strict transform of $\Delta_2$ with other components. Let $\widetilde{\Delta_i}$ denote the strict transforms on $\widetilde{X}$ of the divisors $\Delta_i$ defined previously. We claim that the intersection of $\widetilde{\Delta_2}$ with any component of the other $\widetilde{\Delta_i}$ is transverse.

To prove the claim, first we consider the intersection of $\Delta_2$ with one of the divisors $\Pi^2_{i,j,k}$. 
As in equations (\ref{eqn-M1}) and (\ref{eqn-M2}) we can assume that the curve $C$ and the divisor $\BS_2(C)$ are defined by the maximal minors of the matrices $M_1$ and $M_2$ with $m=3$:
\begin{align*}
       M_1 &=
    \begin{pmatrix}
      x_0 & x_1  & y_0 & y_1 & y_2 \\
      x_1 & x_2  & y_1 & y_2 & y_3
    \end{pmatrix}\\
     M_2 &= 
    \begin{pmatrix}
      x_0 & y_0 & y_1\\
      x_1 & y_1 & y_2\\
      x_2 & y_2 & y_3
    \end{pmatrix}
\end{align*}
Moreover, by applying projective transformations of $\PP^2 \times \PP^3$ fixing $C$ and hence $\BS_2(C)$, we can assume that the divisor $\Pi^2_{\{i,j,k\}}$ is the strict transform of the subvariety defined by $y_1=y_2$. Direct computation then shows that in $\PP^2 \times \PP^3$ the intersection $\Delta_2 \cap \Pi^2_{\{i,j,k\}}$ is transverse outside the subset $\{p_i,p_j,p_k\}$; moreover, at these points, the projectivised tangent cones of the two varieties intersect transversely inside the projectivised tangent space. Blowing up all the $p_i$, therefore, the intersection $\Delta_2 \cap \Pi^2_{\{i,j,k\}}$ becomes transverse on $X$, and therefore remains so on $\widetilde{X}$.

Next we consider the intersection of $\widetilde{\Delta}_1 \cap \widetilde{\Delta}_2$ on $\widetilde{X}$. Consider the blowup $\varphi \colon Y \arrow X$ of $X$ along $C$: by the universal property of blowups, the blowup $\rho \colon \widetilde{X} \arrow X$ factors through $\varphi$, so it suffices to show that the strict transforms of $\Delta_1$ and $\Delta_2$ on $Y$ are transverse.
Again we can assume that $\Delta_2$ is given by the determinant of $M_2$, and that $\Delta_1$ is the strict transform on $X$ of the conic $\left\{x_1^2-x_0x_2=0\right\} \subset \PP^2 \times \PP^3$.
  Direct calculation shows that $\Delta_1$ and $\Delta_2$ intersect transversely away from $C$, so it is enough to show that the intersection of their strict transforms is transverse inside the exceptional divisor $\pi^{-1}(C)$. The group of automorphisms of $\PP^2 \times \PP^3$ fixing $C$ act transitively on $C$, so it suffices to consider the intersection inside one fibre $\pi^{-1}(p)$, for some chosen $p \in C$. Choosing $p=\left( [1,0,0],[1,0,0,0]\right)$, direct calculation shows that the intersection of projectivised tangent cones $\PP T_p \Delta_1 \cap \PP T_p \Delta_2$ inside $\PP T_p X$ is a quadric cone of corank 1, with vertex at the point $c=\PP T_p C$ corresponding to the tangent direction of the curve $C$ at $p$.

  The fibre of the blowup $\varphi \colon Y \arrow X$ over $p \in C$ is the projectivisation of the fibre of the normal bundle $(N_C)_p = (T_X)_p/(T_C)_p$. The map
  \begin{align*}
    \PP T_p X \rat \PP (N_C)_p
  \end{align*}
  is given by projection away from the point $c$. Since $\PP T_p \Delta_1 \cap \PP T_p \Delta_2$ has corank 1 with vertex at $c$, when we project away from $c$, the images of $\PP T_p \Delta_1$ and $\PP T_p \Delta_2$ intersect transversely in $\PP (N_C)_p$. This completes the proof of the claim.
  
  Finally we need to show that discrepancies of all exceptional divisors on this log resolution are strictly greater than $-1$. Computing these discrepancies is straightforward since $\Delta_1$ and all the components of $\Delta_3$ are nonsingular, while $\Delta_2$ is nonsingular outside $C$ and has multiplicity 2 along $C$.
  
For a subvariety $V$ which is blown up we have
\begin{align*}
\discrep(V) &= \left(\operatorname{codim}(V,X) -1 \right) - \operatorname{mult}_V\Delta
\end{align*}
which for the various kinds of blown up subvarieties gives
\begin{align*}
    \discrep(\Pi^2_{\{i\}}) &= 2-\epsilon_2-6\epsilon_3 \\
    \discrep(\BL(p_i,p_j)) &= 2 - \epsilon_2- 3\epsilon_3\\ 
    \discrep(\Pi^2_{\{i,j\}}) &= 1-\epsilon_2-\epsilon_3\\
    \discrep(C) &=3-\epsilon_1-2\epsilon_2. 
\end{align*}
Taking as before $\epsilon_1 = \frac12, \, \epsilon_2 = \frac{10}{11}, \, \epsilon_3 = \frac{1}{10}$, all these discrepancies are strictly larger than $-1$, as required. 
\end{proof}

\section{The effective cone of \texorpdfstring{$X^{3,4}_6$}{X{3,4}6}}\label{section-X346}

In a similar way to the case of $X^{2,3}_5$, we can compute the effective cone of $X^{3,4}_6$ using the Cone Method, with various base locus inequalities as inputs. 

We recall some notation as follows. The bilinear secant $\BS_2(C)$ is a 4-dimensional subvariety of $\PP^3 \times \PP^4$. Let $p_i$ be one of the points: then the strict transform of the bilinear join $\BJ(p_i,\BS_2(C))$ is a divisor in $X^{3,4}_6$, which we denote by $D_i$. It has class $H_1+2H_2-2\sum_k E_k -E_i$. Let $p^1_1,\ldots,p^1_6$ be the projections of the 6 points to $\PP^3$: then for each $i$ there is a unique quadric cone $Q$ passing through all of $p^1_1,\ldots,p^1_6$ and with vertex at $p^1_i$. Let $Q_i$ denote the strict transform on $X^{3,4}_6$ of the pullback of this quadric cone; it has class $2H_1-\sum_kE_k-E_i$.

\begin{proposition} \label{prop-bli-x346}
Let $D=d_1H_1+d_2H_2-\sum_i m_i E_i$ be an effective divisor class on $X^{3,4}_6$. Then
\begin{enumerate}
\item[(a)] For any $\{i,j,k,l\} \subset \{1,\ldots,6\}$, the unique effective divisor with class $H_2-E_i-E_j-E_k-E_l$ is contained in the base locus $\Bs(D)$ with multiplicity at least
\begin{align*}
    \max \{0, m_i+m_j+m_k+m_l-3d_1-3d_2 \}.
\end{align*} 
    \item[(b)] For each $i$, the divisor $Q_i$ is contained in the base locus $\Bs(D)$ with multiplicity at least
\begin{align*}
    \max \{0, \sum_k m_k + m_i - 4d_1-6d_2\}.
\end{align*}
\item[(c)] For each $i$, the divisor $D_i$ is contained in the base locus $\Bs(D)$ with multiplicity at least 
\begin{align*}
    \max \{0, 2 \sum_k m_k + m_i - 8d_1 - 10d_2\}.
\end{align*}
\end{enumerate}
\end{proposition}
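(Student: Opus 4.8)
The plan is to prove the three parts separately, since (a) and (c) are direct specialisations of base locus lemmas already established, while (b) requires a genuinely new observation. For part (a), the class $H_2 - E_i - E_j - E_k - E_l$ is the strict transform of the bilinear span $\BL(p_i,p_j,p_k,p_l)$ of four of the blown-up points; since $n=3$ here, this is the bilinear span of $n+1$ points, so the assertion is exactly Proposition \ref{BLI for bilinear spans}(a) with $n=3$ and $k=4$, whose bound $\max\{0,\,m_i+m_j+m_k+m_l-(k-1)(d_1+d_2)\}$ specialises to the stated one. For part (c), recall that $D_i$ is by definition the strict transform of $\BJ(p_i,\BS_2(C))=\BJ(\BS_2(C),\BL(p_i))$, so the claim is the special case of Proposition \ref{prop-bli-bilinearjoins} with $n=3$, $k=2$ and $l=1$ (taking $i_1=i$); substituting these values gives $(nk+k+l-1)=8$ and $(nk+2k+l-1)=10$, which is precisely $\max\{0,\,2\sum_k m_k + m_i - 8d_1 - 10d_2\}$.

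The substantive case is part (b). The divisor $Q_i$ is the pullback of the quadric cone over the rational normal cubic $R_3\subset\PP^3$ with vertex $p_i^1$, hence is of the form \eqref{pulledback-cones} with $t=1$ and $I=\{i\}$. Applying Proposition \ref{prop-bli-pulledbackcone} directly would only yield the weaker bound with $7d_2$ in place of $6d_2$; to obtain the sharp statement I would instead exploit a bilinear join structure, as anticipated in the Remark following Proposition \ref{prop-bli-pulledbackcone}. The key geometric observation is that $Q_i=\BJ(\Pi^1_{\{i\}},C)$, where $\Pi^1_{\{i\}}=\{p_i^1\}\times\PP^4$ is the fibre of $\pi_1$ over $p_i^1$ and $C$ is the distinguished rational curve. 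Indeed, for $z_1=(p_i^1,y_1)\in\Pi^1_{\{i\}}$ and $z_2\in C$, the first-factor projection of $\BL(z_1,z_2)$ is the line through $p_i^1$ and $z_2^1\in R_3$, and these lines sweep out the cone $q_i$; meanwhile, letting $y_1$ vary freely in $\PP^4$, the second-factor lines $\overline{y_1\,z_2^2}$ sweep out all of $\PP^4$. Hence the spans $\BL(z_1,z_2)$ cover a dense subset of $Q_i=q_i\times\PP^4$.

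With this interpretation in hand I would apply Lemma \ref{recipe-BLI-bilinear joins} to a general such pair $z_1,z_2$. For general $z_2\in C$ the multiplicity of $D$ at $z_2$ is at least $\kappa_C(D)=\max\{0,\,\sum_k m_k-3d_1-4d_2\}$, the multiplicity of $D$ along the distinguished curve. For general $z_1\in\Pi^1_{\{i\}}$, the fibre $\Pi^1_{\{i\}}$ is swept out by the moving curves of class $l_2-e_i$ (lines through $p_i$ inside the fibre), so by \eqref{strict-transform-on-overlineX} the multiplicity of $D$ at $z_1$ is at least $\max\{0,\,m_i-d_2\}$. When both quantities are positive, Lemma \ref{recipe-BLI-bilinear joins} shows that $D$ contains $\BL(z_1,z_2)$ to order at least
\[
(m_i-d_2)+\left(\sum_k m_k-3d_1-4d_2\right)-(d_1+d_2)=\sum_k m_k+m_i-4d_1-6d_2,
\]
and semicontinuity of multiplicity propagates this bound to all of $Q_i$. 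The degenerate cases $m_i\le d_2$ and $\sum_k m_k\le 3d_1+4d_2$ are disposed of by checking, via the effectivity inequalities of Lemma \ref{lemma-effectivityinequality}, that the claimed bound is then $\le 0$, so the statement is vacuous.

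The main obstacle is precisely this last case: recognising the correct presentation $Q_i=\BJ(\Pi^1_{\{i\}},C)$ rather than the naive $\BJ(p_i,C)$ (which is only three-dimensional and fails to fill $Q_i$), and seeing that it is the whole fibre $\Pi^1_{\{i\}}$, contributing the multiplicity $m_i-d_2$, and not merely the point $p_i$, that supplies the extra term needed to improve the $7d_2$ of Proposition \ref{prop-bli-pulledbackcone} to $6d_2$. The remaining ingredients—the covering statement and the degenerate-case bookkeeping—are routine.
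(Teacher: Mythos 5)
Your proposal is correct and follows essentially the same route as the paper: parts (a) and (c) are the same specialisations of Propositions \ref{BLI for bilinear spans} and \ref{prop-bli-bilinearjoins}, and your key observation in (b) --- that $Q_i$ is the bilinear join $\BJ(\Pi^1_{\{i\}},C)$ of the \emph{whole fibre} (not merely the point $p_i$) with the distinguished curve, fed into Lemma \ref{recipe-BLI-bilinear joins} via the multiplicities $\max\{0,m_i-d_2\}$ and $\kappa_C(D)$ --- is exactly the paper's argument, including the improvement over the $7d_2$ bound of Proposition \ref{prop-bli-pulledbackcone}.
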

\begin{proof}
Statement (a) follows from Proposition \ref{BLI for bilinear spans} with $n=3$ and $k=4$.

For (b), we claim that $Q_i$ is the bilinear join of $C$ and the fibre $\Pi^1_{\{i\}}\cong\PP^4$ over $p_1^i$. Accepting the claim, since the strict transform of $\Pi^1_{\{i\}}$ in $X^{3,4}_6$  is contained in the base locus of $D$ at least $\max\{0,m_i-d_2\}$ times, then the statement follows from Lemma \ref{recipe-BLI-bilinear joins}.
We now prove the claim. Let $q\in\PP^3\times\PP^4$ be any point which does not lie on $\Pi^1_{\{i\}}$. By Definition \ref{definition-bilinearspan}, the bilinear join of $q$ and $\Pi^1_{\{i\}}$ is
\begin{align*}
\BL(\Pi^1_{\{i\}},q)&=\bigcup_{p\in\Pi^1_{\{i\}}}L^1(p,q)\times L^2(p,q)\\
&=L^1(p_i,q)\times\bigcup_{p\in\Pi^1_{\{i\}}}L^2(p,q)\\
&=L^1(p_i,q)\times\PP^4.
\end{align*}
From this, we obtain the following characterisation of the bilinear join of $C$ and $\Pi^1_{\{i\}}$:
\begin{align*}
\BL(\Pi^1_{\{i\}},C)&=\bigcup_{q\in C}\BL(\Pi^1_{\{i\}},q)\\
&=\bigcup_{q\in C} L^1(p_i,q)\times\PP^4\\
&=\left(\bigcup_{q\in C} L^1(p_1,q)\right)\times\PP^4.
\end{align*}
The first factor of the above expression is the quadric cone of $\PP^3$ over the projection of the curve $C$ to $\PP^3$ with vertex $p_i^1$, hence the strict transform of the product is $Q_i$, which proves the claim.

Statement (c) follows from Proposition \ref{prop-bli-bilinearjoins}, setting $k=2$ and $l=1$.
\end{proof}

\begin{proposition} \label{prop-effectivityx346}
Any effective divisor class $D=d_1H_1+d_2H_2-\sum_i m_i E_i$ on $X^{3,4}_6$ must satisfy the following inequalities:
\begin{enumerate}
    \item[(a)] $5d_1 + 4d_2 - \sum_{i=1}^6 m_i  \geq 0$;
    \item[(b)] $6d_1+7d_2 - 2 \sum_{i \in I} m_i - \sum_{j \notin I} m_j  \geq 0$ for $I \subset \{1,\ldots,6\}, \ |I|=3$;
    \item[(c)] $11d_1+14d_2 - 3 \sum_{i=1}^6 m_i \geq 0.$
\end{enumerate}
\end{proposition}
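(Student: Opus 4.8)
The plan is to obtain all three inequalities exactly as in Proposition \ref{prop-bilinearjoins-x235} for $X^{2,3}_5$: each is an \emph{effectivity} inequality coming from one of two mechanisms. Either I intersect $D$ with a moving curve class $\gamma$, so that $D\cdot\gamma\ge0$ because the irreducible representatives of $\gamma$ sweep out a dense subset of $X$; or I exhibit a bilinear secant variety or bilinear join that equals all of $X$, in which case it cannot occur as a proper component of $\Bs(D)$, so the \emph{lower} bound for its multiplicity of containment supplied by the base locus lemmas of Section \ref{section-bli} is forced to vanish. Inequalities (b) and (c) will use the second mechanism, and (a) the first.

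For (c), Proposition \ref{proposition-bilinearsecantdim} with $n=3$ and $k=3$ gives that $\BS_3(C)$ is irreducible of dimension $\min\{3k-2,2n+1\}=\min\{7,7\}=7$, hence $\BS_3(C)=X=\PP^3\times\PP^4$. Thus $\kappa_{\BS_3(C)}(D)=0$, and the formula of Proposition \ref{prop-bli-bisecant} with $k=3$ reads $3\sum_i m_i-11d_1-14d_2\le0$, which is exactly (c). For (b), three general points span a plane in each factor, so $\BL(p_{i_1},p_{i_2},p_{i_3})=\PP^2\times\PP^2$ has dimension $4$, and I would show $\BJ(C,\BL(p_{i_1},p_{i_2},p_{i_3}))=X$. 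The expected dimension is right, namely $\dim C+\dim\BL+2=1+4+2=7$, and dominance follows from a short incidence count: writing a point of $C$ as $(\gamma(t),\delta(t))$ with $\gamma(t)\in\PP^3$ and $\delta(t)\in\PP^4$, for a general target $(u,v)$ the line $\overline{\gamma(t)u}$ always meets the plane $L^1_I\subset\PP^3$, while $\overline{\delta(t)v}$ meets $L^2_I\subset\PP^4$ only in codimension one in $t$, cutting out finitely many $t$, so the fibre over $(u,v)$ is finite. Granting this, Proposition \ref{prop-bli-bilinearjoins} with $k=1$ and $l=3$ forces the corresponding $\kappa$ to be zero; using $\sum_i m_i+\sum_{i\in I}m_i=2\sum_{i\in I}m_i+\sum_{j\notin I}m_j$ this is precisely (b).

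For (a) I would produce a moving curve of class $\gamma=5l_1+4l_2-\sum_{i=1}^6 e_i$, so that $D\cdot\gamma=5d_1+4d_2-\sum_i m_i\ge0$. Concretely, I would realise such curves as graphs of morphisms $(\phi,\psi)\colon\PP^1\to\PP^3\times\PP^4$ with $\deg\phi=5$ and $\deg\psi=4$ passing through the six points: take $\psi$ to be a rational normal quartic through the projected points $p^2_1,\dots,p^2_6$ (a positive-dimensional family, since such a quartic is determined by seven general points of $\PP^4$), and then solve for a degree-$5$ map $\phi$ with $\phi(t_i)=p^1_i$, which imposes three conditions per point on the $23$-dimensional space of such maps and hence always admits solutions. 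A parameter count then shows this family is positive-dimensional and dominates $X$ (a general point can be interpolated by passing both $\psi$ and $\phi$ through one further general point), and each member is irreducible because $\psi$ already embeds $\PP^1$.

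The hard part will be the dominance statements underlying (a) and (b): it is not enough that each construction has the expected dimension, one must verify that it fills $\PP^3\times\PP^4$ rather than a proper subvariety. For (b) this reduces to the incidence computation sketched above, comparing how a variable line through a point of $C$ meets a fixed plane in $\PP^3$ versus in $\PP^4$; for (a) it is the verification that the quartic and the quintic map can simultaneously be forced through an arbitrary extra point. By contrast (c) is immediate once the dimension of $\BS_3(C)$ is identified, since an irreducible subvariety of $X$ of dimension $\dim X$ must be all of $X$.
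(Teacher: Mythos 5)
Your proposal is correct and follows the same overall strategy as the paper's proof: (c) via $\BS_3(C)=\PP^3\times\PP^4$ together with Proposition \ref{prop-bli-bisecant} at $n=k=3$; (b) via $\BJ(C,\BL(p_{i_1},p_{i_2},p_{i_3}))=X^{3,4}_6$ together with Proposition \ref{prop-bli-bilinearjoins} at $k=1$, $l=3$; and (a) via the moving curve class $5l_1+4l_2-\sum_{i=1}^6 e_i$. For (b) and (c) you in fact go further than the paper, which simply asserts that the third bilinear secant and the bilinear join fill out the whole space, whereas you justify this via the dimension formula of Proposition \ref{proposition-bilinearsecantdim} and via an incidence count; both justifications are sound, with one small point to make explicit in (b): the condition that $\overline{\delta(t)v}$ meets $L^2_I$ is the vanishing of a binary form of degree $4$ in $t$, which for general $v$ is nonzero and hence has roots, so the fibre over a general $(u,v)$ is not only finite but nonempty --- this nonemptiness is what converts your fibre computation into genuine dominance. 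The only real divergence is in (a): the paper also passes through a rational quartic in $\PP^4$ containing the projected points, but then identifies its preimage with $\PP^3\times\PP^1$ and cites Corollary \ref{corollary-ratcurvefewerpoints}(c) (with $d=5$, $s=3$, $k=6$) to conclude that bidegree-$(5,1)$ curves through the six points sweep out a dense subset, whereas you re-derive this covering statement by Lagrange interpolation of a quintic map $\PP^1\to\PP^3$. That re-derivation is exactly where your acknowledged ``hard part'' sits: the count of $3$ conditions per point presupposes introducing a scaling constant per point, and one must check that the resulting linear system admits solutions with all scalars nonzero and with the four quintics sharing no common factor. This can be done (the consistency conditions on the scalars form a general linear system for general points), but citing the corollary, as the paper does, disposes of the issue at once, since the analogous care was already taken in Proposition \ref{prop-rational-curve}.
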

\begin{proof}
To prove (a), start by choosing a smooth curve of degree 4 in $\PP^4$ through the 6 points $p^2_1,\ldots,p^2_6$; such curves cover $\PP^4$, and any such curve is isomorphic to $\PP^1$. The preimage of any such curve is isomorphic to $\PP^3 \times \PP^1$, so we are reduced to showing that curves of bidegree $(5,1)$ through 6 points cover $\PP^3 \times \PP^1$. This follows from Corollary \ref{corollary-ratcurvefewerpoints} (c). 

To prove (b), note that for 3 points $p_i$, $p_j$, and $p_k$, the bilinear join $\BJ(C,\BL(p_i,p_j,p_k))$ equals $X^{3,4}_6$. Applying Proposition \ref{prop-bli-bilinearjoins} with $k=1$, $l=3$, and $n=3$ gives the required inequality. 

For (c), we note that the third bilinear secant $\BS_3(C)$ also equals $X^{3,4}_6$. Then the claimed inequality follows from Proposition \ref{prop-bli-bisecant} with $n=k=3$.
\end{proof}
\begin{theorem}
    \label{theorem-effcone-x346}
The effective cone $\Eff(X^{3,4}_6)$ is generated by the divisor classes below.
\rowcolors{2}{gray!15}{white}
\emph{
\begin{longtable}{cccccccc}
$H_1$ & $H_2$ & $E_1$ & $E_2$ & $E_3$ & $E_4$ & $E_5$ & $E_6$\\
\hline\hline
0 & 0 & 1 & 0 & 0 & 0 & 0 & 0\\
1 & 0 & -1 & -1 & -1 & 0 & 0 & 0\\
0 & 1 & -1 & -1 & -1 & -1 & 0 & 0\\
1 & 2 & -3 & -2 & -2 & -2 & -2 & -2\\
2 & 0 & -2 & -1 & -1 & -1 & -1 & -1\\
1 & 1 & -2 & -2 & -1 & -1 & -1 & -1\\
2 & 1 & -2 & -2 & -2 & -2 & -2 & -2
\end{longtable}}
\end{theorem}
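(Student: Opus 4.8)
The plan is to compute $\Eff(X^{3,4}_6)$ by the Cone Method of \cite{GPP21}, proceeding exactly as in the proof of Theorem \ref{theorem-effcone-x235}. The \emph{rigid} candidate generators are the divisors that can occur as divisorial fixed components of other effective divisors: the exceptional divisors $E_i$; the pullbacks of the extremal generators of $\Eff(X^{3,0}_6)$ and $\Eff(X^{0,4}_6)$, namely the planes $H_1-E_i-E_j-E_k$, the quadric cones $Q_i$, and the hyperplanes $H_2-E_i-E_j-E_k-E_l$ (together with the $E_i$, these generate the two single-factor effective cones by \cite{BDPn+3,CT06}); and the bilinear joins $D_i=\BJ(p_i,\BS_2(C))$. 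First I would collect the linear inequalities that the class of every irreducible effective divisor \emph{other than} these rigid divisors must satisfy: the effectivity inequalities of Lemma \ref{lemma-effectivityinequality} and Proposition \ref{prop-effectivityx346}, together with the base locus inequalities of Lemma \ref{bli-exceptional} (for the $E_i$), Propositions \ref{BLI pulledback hyperplanes}, \ref{BLI for bilinear spans} and \ref{prop-bli-pulledbackcone} (for the pulled-back planes, hyperplanes and quadric cones), and Proposition \ref{prop-bli-x346} (for $Q_i$ and $D_i$). As in Theorem \ref{theorem-effcone-x235}, since a prime divisor can have only itself as a divisorial fixed component, every irreducible effective divisor whose class is not one of the rigid classes must satisfy all of these inequalities.

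Next I would use Normaliz to compute the cone $P_1$ cut out by this system. By the previous paragraph $P_1$ contains the class of every irreducible effective divisor except possibly the rigid ones; in particular it contains the movable bidegree-$(1,1)$ and $(2,1)$ pencils. Adjoining the finitely many rigid classes to the generators of $P_1$ and computing the cone $P_2$ they span then yields a cone with $\Eff(X^{3,4}_6)\subseteq P_2$. The computation is expected to return precisely the seven rows in the statement, so it suffices to verify that each of these seven classes is effective, for this forces $P_2=\Eff(X^{3,4}_6)$.

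Checking effectivity is routine for six of the seven rows. The exceptional divisors and the pullbacks of the plane, hyperplane and quadric-cone classes are manifestly effective; $D_i=\BJ(p_i,\BS_2(C))$ is effective by construction; and $2H_1+H_2-2\sum_i E_i$ is effective by a dimension count, since $h^0(\mathcal O(2,1))=50$ exceeds the $6\cdot 8=48$ conditions imposed by a double point at each of the six points, so $h^0\ge 2$ and the class moves in at least a pencil.

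The main obstacle is the bidegree-$(1,1)$ class $H_1+H_2-2E_1-2E_2-E_3-E_4-E_5-E_6$. Here the naive count balances exactly, $h^0(\mathcal O(1,1))=20$ against $2\cdot 8+4\cdot 1=20$ conditions, and predicts an \emph{empty} system, so a genuinely new argument is needed (this is the ``unexpected'' feature noted in the introduction). The key point is that a $(1,1)$ form on $\PP^3\times\PP^4$ is a bilinear form $x^{\mathsf{T}}Ay$ given by a $4\times 5$ matrix $A$, and that $\{x^{\mathsf{T}}Ay=0\}$ has a double point at $p_k=([x^{(k)}],[y^{(k)}])$ precisely when $Ay^{(k)}=0$ and $A^{\mathsf{T}}x^{(k)}=0$. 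Imposing this at $p_1$ and $p_2$ forces $\langle y^{(1)},y^{(2)}\rangle\subseteq\ker A$ and $\langle x^{(1)},x^{(2)}\rangle\subseteq\ker A^{\mathsf{T}}$, so $A$ has rank at most $2$ and factors through the fixed quotient $\mathbb{C}^5/\langle y^{(1)},y^{(2)}\rangle\cong\mathbb{C}^3$ followed by the inclusion of the fixed plane $\langle x^{(1)},x^{(2)}\rangle^{\perp}\cong\mathbb{C}^2$; such $A$ form a $6$-dimensional space. The four remaining incidence conditions at $p_3,\dots,p_6$ are linear in $A$ and, for points in general position, independent, cutting this space down to dimension (at least) two and hence yielding a pencil of effective divisors. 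I would present this linear-algebra argument explicitly (or, equivalently, exhibit the pencil as a bilinear cone in the spirit of \cite[Theorem 3.6]{GPP21}); with this in hand all seven rows are effective and the Cone Method concludes the proof.
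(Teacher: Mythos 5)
Your proposal is correct, and its overall architecture is exactly the paper's: the same Cone Method pipeline (collect effectivity and base-locus inequalities, compute the cone $P_1$ with Normaliz, adjoin the rigid classes $E_i$, pulled-back extremal classes, $Q_i$ and $D_i$, compute $P_2$, then verify each resulting generator is effective), with the same caveat that the middle step is a machine computation. Where you genuinely diverge is the effectivity proof for the unexpected bidegree-$(1,1)$ class $H_1+H_2-2E_1-2E_2-E_3-\cdots-E_6$. The paper obtains it by taking bilinear cones with vertex $p_j$ over the effective classes $H_1+H_2-E_i-\sum_k E_k$ on $X^{2,3}_5$, whose effectivity was established in Theorem \ref{theorem-effcone-x235}, thus leaning on the lower-dimensional case and the cone construction of \cite[Theorem 3.6]{GPP21}. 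You instead argue directly: a $(1,1)$ form is $x^{\mathsf T}Ay$ for a $4\times 5$ matrix $A$, singularity at $p_1,p_2$ forces $\langle y^{(1)},y^{(2)}\rangle\subseteq\ker A$ and $\operatorname{im}A\subseteq\langle x^{(1)},x^{(2)}\rangle^{\perp}$, so $A$ lives in a $6$-dimensional space $\operatorname{Hom}(\CC^3,\CC^2)$, and the four remaining incidence conditions are linear, leaving a space of dimension at least $2$, hence a pencil. This argument is sound (note that independence of the four conditions is not even needed for the lower bound $6-4=2$), and it buys self-containedness plus a transparent explanation of why the naive count $20-20=0$ fails: the singularity conditions force rank degeneration and are therefore dependent. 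The paper's route, by contrast, fits the inductive philosophy it advertises for general $n$, namely producing extremal and movable classes on $X^{n,n+1}_{n+3}$ as bilinear cones over classes on $X^{n-1,n}_{n+2}$. Your dimension count for the $(2,1)$ class ($h^0=50$ versus $48$ conditions) is the same "positive expected dimension" argument the paper uses, just made explicit.
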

\begin{proof}
    The proof is very similar to that of Theorem \ref{theorem-effcone-x235}. Again we follow the ``Cone Method",
 using the following collections of inequalities:
\begin{itemize}
    \item The ``pullbacks" of the inequalities defining $\Eff(X^{3,4}_5)$ which are listed in Theorem \ref{eff-cone-n+2};
    \item The base locus inequalities for exceptional divisors from Lemma \ref{bli-exceptional}
    \item The base locus inequalities from Proposition \ref{prop-bli-x346}.  
    \item The effectivity inequalities from Proposition \ref{prop-effectivityx346}.
  
\end{itemize}
By construction, the class of every irreducible effective divisor on $X^{3,4}_6$ other than the $Q_i$, the $D_i$, the $E_i$, and pullbacks of extremal linear classes must satisfy all these inequalities. We use Normaliz to compute the generators of the cone cut out by these inequalities; these computations are contained in the files {\tt X346-eff-ineqs.*}.

Next we take the list of generators from the previous step, add to it the classes of the $Q_i$ and the $D_i$, exceptional divisors $E_i$, and pullbacks of extremal linear classes, and compute the new cone generated by all these classes. By construction, this cone will contain the effective cone, so if it is generated by effective classes, it must equal the effective cone. 

The files {\tt X346-eff-gens.*} compute the generators of the new cone: the output is the list of classes displayed in the statement of the theorem. All these are classes of effective divisors, as we now explain. The first row and its permutations give the classes of the exceptional divisors, while the second and third rows are the classes of pullbacks of linear spaces in the two factors. The fourth and fifth rows are the classes of the $D_i$ and $Q_i$ respectively. For the fourth row, in Theorem \ref{theorem-effcone-x235} we explained that the class $H_1+H_2-E_i-\sum_k E_k$ is effective on $X^{2,3}_5$. Taking cones over these divisors with vertex at $p_j$, we get effective divisors with the given class. Finally, the class $2H_1+H_2-2\sum_i E_i$ has positive expected dimension, so it is effective.
\end{proof}

\begin{remark}
As in the case of $X^{2,3}_5$, we can also obtain the movable cone $\Mov(X^{3,4}_6)$ as a byproduct of this method. The resulting cone has many types of extremal rays, so we do not reproduce the results here. The interested reader can consult the file {\tt X346-eff-ineqs.out}.
\end{remark}

\section{Final remarks and open questions}
\label{section-Xn,n,n+3}
In this final section, we make some remarks to indicate how the phenomena we observed in the case of $X^{2,3}_5$ and $X^{3,4}_6$
recur, with increasing complexity, in higher dimensions, and we pose some questions.

In the case $n=2$ we obtained a fixed divisor as the bilinear secant variety $\BS_2(C)$ of our distinguished rational curve. For $n=3$ this construction no longer gives a divisor, but we do obtain divisors by taking bilinear joins of $\BS_2(C)$ with points. To generalise this, fix $n$ and let $l$ and $k$ be natural numbers such that $3k+2l-2=2n$. Choose $l$ points among $p_1,\ldots,p_{n+3}$, say for simplicity $p_1,\ldots,p_l$. Then the bilinear join
\begin{align*}
    \BJ(p_1,\ldots,p_l,\BS_k(C))
\end{align*}
is a divisor in $X^{n,n+1}_{n+3}$. In a similar way to Propositions \ref{prop-bli-x235} and \ref{prop-bli-x346}, using the base locus inequalities from Proposition \ref{prop-bli-bilinearjoins} one can then prove:
\begin{proposition}
For $n,k,l$ as above, the divisors $\BJ(p_1,\ldots,p_l,\BS_k(C))$ defined above span extremal rays of $\Eff(X^{n,n+1}_{n+3})$.
\end{proposition}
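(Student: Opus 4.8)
The plan is to produce, for each divisor $J := \BJ(p_1,\ldots,p_l,\BS_k(C))$, a single curve class $\gamma$ that is nonnegative against every irreducible effective divisor except $J$ and strictly negative against $J$ itself; a short convexity argument then forces $[J]$ to span an extremal ray. The class $\gamma$ is the one dual to the base locus functional of Proposition \ref{prop-bli-bilinearjoins}: writing that functional as $K_J(D) = -D\cdot\gamma$ and reading off coefficients gives
\[
\gamma = (nk+k+l-1)\,l_1 + (nk+2k+l-1)\,l_2 - (k+1)\sum_{i=1}^{l} e_i - k\sum_{i=l+1}^{n+3} e_i .
\]
First I would note that $J$ is irreducible (it is the closure of a union of bilinear spans parametrised by the irreducible variety $\BS_k(C)$ and the join points, exactly as in Proposition \ref{proposition-bilinearsecantdim}) and that it is a divisor, since $\dim J = (3k-2)+2l = 2n$ by the hypothesis $3k+2l-2=2n$. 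I would also observe that $\BJ(p_1,\ldots,p_l,\BS_k(C))$ coincides with $\BJ(\BS_k(C),\BL(p_1,\ldots,p_l))$, so Proposition \ref{prop-bli-bilinearjoins} applies directly.

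The mechanism producing the sign conditions on $\gamma$ is the following. If $D\ne J$ is an irreducible effective divisor, then the fixed part of $|D|$ is contained in $D$, so $J$ — having the same dimension as $D$ — cannot be a fixed component of $|D|$; hence $\mathrm{mult}_J\Bs(D)=0$. Proposition \ref{prop-bli-bilinearjoins} then gives $0 = \mathrm{mult}_J\Bs(D)\ge \max\{0,K_J(D)\} = \max\{0,-D\cdot\gamma\}$, that is $D\cdot\gamma\ge 0$. Thus $\gamma$ is nonnegative on every irreducible effective class other than $[J]$, using only the base locus inequality together with the irreducibility of $J$.

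Granting the strict negativity $[J]\cdot\gamma<0$, extremality follows formally. Since $X^{n,n+1}_{n+3}$ is projective, $\Eff(X^{n,n+1}_{n+3})$ is salient, and every effective class is a nonnegative combination of irreducible ones. In any decomposition $[J]=\xi+\eta$ with $\xi,\eta$ effective, expanding into irreducibles and pairing with $\gamma$ shows that the left-hand side $[J]\cdot\gamma<0$ can only be negative if $[J]$ itself occurs among the irreducible summands, with total coefficient $c\ge 1$; writing $(1-c)[J]$ as an effective combination of irreducibles distinct from $[J]$ and invoking salience forces $c=1$ and the remaining part to vanish. Hence the decomposition is proportional to $[J]$, so $[J]$ spans an extremal ray.

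The one genuinely substantial step — and the main obstacle — is verifying $[J]\cdot\gamma<0$, which requires the class of $J$. I expect it to be
\[
[J] = \tfrac{k}{2}H_1 + \bigl(\tfrac{k}{2}+1\bigr)H_2 - (k+1)\sum_{i=1}^{l}E_i - k\sum_{i=l+1}^{n+3}E_i ,
\]
whereupon a direct substitution using $2n=3k+2l-2$ yields $K_J([J]) = 1$, i.e. $[J]\cdot\gamma=-1$. In the pure-secant case $l=0$ the bidegree $\bigl(\tfrac{k}{2},\tfrac{k}{2}+1\bigr)$ is exactly the bidegree of the determinantal divisor $\{\det M_k=0\}$ of Proposition \ref{proposition-bilinearsecantdim}, and the multiplicity $k$ along $C$ is Proposition \ref{prop-secantmult}; the remaining work is to show that bilinear joining with each point $p_i$ leaves the bidegree unchanged while raising the multiplicity there from $k$ to $k+1$. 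Rigorously establishing this (rather than matching the computed cases $X^{2,3}_5$ and $X^{3,4}_6$) means extending the determinantal and multiplicity-along-$C$ analysis of Section \ref{section-bilinear} to joins with points; along the way one also sees that $k$ must be even for $J$ to be a divisor, since $3k=2(n+1-l)$.
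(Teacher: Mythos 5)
Your skeleton is exactly the argument the paper gestures at: the paper in fact gives no proof of this proposition, saying only that it follows from the base locus inequalities of Proposition \ref{prop-bli-bilinearjoins} ``in a similar way'' to the cases $X^{2,3}_5$ and $X^{3,4}_6$, and your reconstruction of that mechanism is sound. The identification of $\gamma$ as the functional dual to $K_J$, the observation that a prime divisor $D\neq J$ cannot have the divisor $J$ in the fixed part of $|D|$ (so the base locus inequality forces $D\cdot\gamma\ge 0$), the identity $\BJ(p_1,\ldots,p_l,\BS_k(C))=\BJ(\BS_k(C),\BL(p_1,\ldots,p_l))$, and the salience/convexity argument are all correct. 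I have also checked your arithmetic: with your proposed class one gets
\[
K_J([J])=\tfrac{3k^2}{2}+kl-k-nk+1=1
\]
using $2n=3k+2l-2$, so $[J]\cdot\gamma=-1$ as you claim.

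The genuine gap is the one you flag yourself, and it is not a minor loose end: the class
\[
[J]=\tfrac{k}{2}H_1+\bigl(\tfrac{k}{2}+1\bigr)H_2-(k+1)\sum_{i=1}^{l}E_i-k\sum_{i=l+1}^{n+3}E_i
\]
is nowhere proven, and it carries the entire quantitative content of the statement. Every other step of your argument is soft; extremality stands or falls with the single inequality $[J]\cdot\gamma<0$, which cannot be verified without (at least this much of) the class. For $l=0$ the pieces can be assembled from the paper: the bidegree is that of $\det M_k$, namely $(n-k+1,n-k+2)=(k/2,k/2+1)$ by Proposition \ref{proposition-bilinearsecantdim}, and the multiplicity at the blown-up points follows from Proposition \ref{prop-secantmult} \emph{together with} the remark that the stabiliser of $C$ in $\operatorname{Aut}(\PP^n\times\PP^{n+1})$ contains a copy of $\operatorname{PGL}_2$ acting transitively on $C$, so the multiplicity along $C$ is constant rather than merely generic --- a point your write-up also needs, since Proposition \ref{prop-secantmult} speaks only of a general point of $C$ while the $p_i$ are special on $C$ (they determine it). But for $l\ge 1$ neither the bidegree of the join nor the claimed jump of multiplicity from $k$ to $k+1$ at $p_1,\ldots,p_l$ is established in the paper or in your proposal; proving it means extending the determinantal and multiplicity analysis of Section \ref{section-bilinear} to joins with points (or carrying out an equivalent degree computation), which is genuinely new work rather than a routine verification. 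As written, your proof is therefore conditional on the class formula: unconditionally it covers only $l=0$ (with the transitivity remark above) and the cases $n=2,3$ already computed in Theorems \ref{theorem-effcone-x235} and \ref{theorem-effcone-x346}.
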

For convenience, we will call any divisor obtained in this way a BSJ ("bilinear secant and join") divisor. As $n$ increases, the list of generators of the effective cone becomes increasingly complicated. On one hand, for larger $n$ the number of solutions of $3k+2l-2=2n$ grows, and hence there are more types of BSJ divisors spanning extremal rays. On the other hand, it is not clear which movable classes will appear; for each $n$, we will find among the generators of $\Eff(X^{n,n+1}_{n+3})$ the bilinear cone over extremal movable classes from $X^{n-1,n}_{n+2}$, as we did above, but already for $n=3$ a new movable class appeared that was not a cone over something of lower dimension. We ask the following:

\begin{question}
Is the effective cone $\Eff(X^{n,n+1}_{n+3})$ always spanned by classes of BSJ divisors, exceptional divisors, and extremal classes pulled back from the factors, together with the movable cone? By Theorems \ref{theorem-effcone-x235} and \ref{theorem-effcone-x346} the answer is affirmative for $n=2$ and $n=3$. 
\end{question}

Going further, we can ask whether the log Fano property proved in Theorem \ref{theorem-x235-logfano} generalises to all dimensions:
\begin{question}
    Is $X^{n,n+1}_{n+3}$ log Fano for every $n \geq 1$?
\end{question}
The answer is yes for $n=1$ by \cite[Theorem 6.10]{GPP21} and for $n=2$ by Theorem \ref{theorem-x235-logfano}. For $n=3$ we can identify a suitable boundary divisor $\Delta$ making $-K_X-\Delta$ ample; the difficulty is to show that a certain blowup of $X^{3,4}_6$ does indeed give a log resolution. We explained above that as $n$ increases, we obtain more and more fixed divisors spanning extremal rays of the effective cone $\Eff(X^{n,n+1}_{n+3})$. It seems likely that any suitable boundary divisor on $X^{n,n+1}_{n+3}$ will contain many of these fixed divisors as components of its support, meaning that the identification of a log resolution becomes more and more complicated. 

Finally, in small dimensions there are also some ``exceptional" examples $X^{n,n+1}_s$ with $s\geq n+4$ for which we do not know if they are Mori dream spaces or log Fano. We ask:
\begin{question}\label{question-LF-smalldim} 
Which of the following blow-ups are Mori dream spaces or log Fano?
\begin{align*}
X^{2,3}_6, \, X^{2,3}_7, \, X^{3,4}_7, \, X^{4,5}_8.    
\end{align*}
\end{question}


\bibliographystyle{alpha}
\bibliography{biblio}

{\small
  {\sc 
\noindent {\sc Dipartimento di Matematica, Universit\`a degli Studi di Trento, via Sommarive 14, I-38123 Povo di Trento (TN), Italy}

\noindent{\tt elisa.postinghel@unitn.it}
~\newline
    Department of Mathematical Sciences, Loughborough University, Epinal Way, Loughborough LE11 3TU, United Kingdom}

\noindent {\tt A.Prendergast-Smith@lboro.ac.uk}
}
\end{document}